\providecommand{\U}[1]{\protect\rule{.1in}{.1in}}
\newtheorem{theorem}{Theorem}
\newtheorem{corollary}[theorem]{Corollary}
\newtheorem{definition}[theorem]{Definition}
\newtheorem{example}[theorem]{Example}
\newtheorem{lemma}[theorem]{Lemma}
\newtheorem{proposition}[theorem]{Proposition}
\newtheorem{remark}[theorem]{Remark}
\newenvironment{proof}[1][Proof]{\noindent\textbf{#1.} }{\ \rule{0.5em}{0.5em}}
\begin{document}

\title{Exact bounds for efficient consistent matrices obtained from a reciprocal
matrix }
\author{Susana Furtado\thanks{Email: sbf@fep.up.pt. orcid.org/0000-0003-0395-5972. The
work of this author was supported by FCT- Funda\c{c}\~{a}o para a Ci\^{e}ncia
e Tecnologia, under project UID/04561/2025.} \thanks{Corresponding author.}\\CEMS.UL and Faculdade de Economia \\Universidade do Porto\\Rua Dr. Roberto Frias\\4200-464 Porto, Portugal
\and Charles R. Johnson \thanks{Email: crjmatrix@gmail.com. }\\225 West Tazewells Way\\Williamsburg, VA 23185}
\maketitle

\begin{abstract}
For a given reciprocal matrix $A$, we give a union of matrix intervals in
which any consistent matrix obtained from an efficient vector for $A$ lies,
and, conversely, any consistent matrix in this union comes from an efficient
vector for $A$. The maximal sets of entries in the lower and upper bound
matrices of each interval that are attainable by some consistent matrix in the
interval are described. This allows us to understand which subsets of the
alternatives lie above which other subsets in all efficient orders for each
interval. As a result, the partial order on the alternatives dictated by the
efficient vectors follows. Then, we use the tools developed to also show that,
when the $n$-by-$n$ reciprocal matrices $A,B$ are simple perturbed consistent
matrices, or $n=4$, the sets of efficient vectors for $A$ and $B$ coincide
only if $A=B.$

\end{abstract}

\textbf{Keywords}: consistent matrix, decision analysis, efficient order,
efficient vector, matrix intervals, reciprocal matrix

\textbf{MSC2020}: 90B50, 91B06, 15B48

\section{Introduction}

An $n$-by-$n$ entry-wise positive matrix $A=[a_{ij}]$ is called
\emph{reciprocal} if $a_{ji}=\frac{1}{a_{ij}},$ for each pair $1\leq i,j\leq
n.$ In particular, all diagonal entries are $1$. The entries represent
pair-wise ratio comparisons among $n$ alternatives, so that such an $A$ is
also called a \emph{pair-wise comparison matrix}. We denote the set of all
such matrices by $\mathcal{PC}_{n}$. In a variety of multi-criterion decision
models, a cardinal ranking vector $w\in\mathbb{R}_{+}^{n}$ (the set of
positive $n$-vectors) is desired, for an $A\in\mathcal{PC}_{n}$, to represent
relative weights for the $n$ alternatives.

If $a_{ij}a_{jk}=a_{ik}$ for all triples $1\leq i,j,k\leq n$, $A\in
\mathcal{PC}_{n}$ is called \emph{consistent}. In this case, $A=ww^{(-T)}$, in
which $w\in\mathbb{R}_{+}^{n}$ and $w^{(-T)}$ is the entry-wise inverse of the
transpose of $w.$ Vector $w$, which is projectively unique (that is, unique up
to a factor of scale), is the natural ranking vector. However, in practice,
consistent matrices seldom arise. So, we may wish to approximate
$A\in\mathcal{PC}_{n}$ by a consistent matrix, in order to choose $w$. In an
early model \cite{saaty1977}, Saaty recommended the right Perron eigenvector
of $A\in\mathcal{PC}_{n}$ as the cardinal ranking vector. When $A$ is
consistent, this is the correct vector, but, otherwise, the Perron eigenvector
may not be the best choice \cite{blanq2006,Bozoki2014,FJ5}.

A ranking vector $w$ obtained from $A\in\mathcal{PC}_{n}$ should, at least, be
one upon which no global improvement is possible (Pareto optimality). This
means that $\left\vert A-vv^{(-T)}\right\vert \leq\left\vert A-ww^{(-T)}%
\right\vert $, entry-wise in absolute value, implies that $v\in\mathbb{R}%
_{+}^{n}$ is proportional to $w.$ In this event, $w$ is called
\emph{efficient} for $A\in\mathcal{PC}_{n}$ \cite{blanq2006, FJ1} and the
ordering of the alternatives indicated by the magnitudes of the entries of $w$
is called an \emph{efficient order}. We denote the set of all such vectors by
$\mathcal{E}(A)$. This set is (projectively) infinite$,$ unless $A$ is
consistent, in which case it consists of the positive multiples of any column
of $A$. A characterization of efficiency of a vector $w\in\mathbb{R}_{+}^{n}$
in terms of a directed graph appeared in \cite{blanq2006} (see also \cite{FJ2}).

There are some vectors that may be calculated from $A\in\mathcal{PC}_{n},$ via
a fixed formula, and are universally efficient. In \cite{blanq2006} it was
shown that the geometric mean of the columns of $A$ is efficient for $A.$
Later \cite{FJ2}, this result was extended to any weighted geometric mean of
the columns. More recently \cite{FJ7}, the geometric mean of the right and
entry-wise inverse left Perron vectors of $A$ was shown to be efficient for
$A$.

An inductive description of $\mathcal{E}(A)$ was presented in \cite{FJ4}. A
characterization of $\mathcal{E}(A)$, for inconsistent $A\in\mathcal{PC}_{n}$,
as a union of at most $\frac{(n-1)!}{2}$ convex sets was given in \cite{FJ3}.
Each of these sets is associated with a Hamiltonian cycle product $<1$ in the matrix.

In \cite{FJ8} we presented a matrix interval, depending on the entries of
$A\in\mathcal{PC}_{n}$, for the matrix $W=ww^{(-T)}$ obtained from a
$w\in\mathcal{E}(A)$. As a single matrix interval, this is the best possible,
since any entry of the lower and upper bound matrices is attainable for some
$w\in\mathcal{E}(A)$. This interval allowed us to characterize the matrices
$A\in\mathcal{PC}_{n}$ for which all efficient orders are the same. This is
especially important in applications in which an ordinal ranking is the goal.
However, the obtained matrix interval may include consistent matrices that do
not result from an efficient vector. We also have noticed that, for
$A,B\in\mathcal{PC}_{n}$, if $\mathcal{E}(A)=\mathcal{E}(B)$ then the given
matrix intervals coincide for $A$ and $B.$ If $n=3$, this implies that $A=B.$

Here we give a union of at most $\frac{(n-1)!}{2}$ matrix intervals in which
any matrix $W=ww^{(-T)}$ obtained from a $w\in\mathcal{E}(A)$ lies and such
that any consistent matrix in it comes from an efficient vector. This improves
the bounds in \cite{FJ8} and gives another way of describing $\mathcal{E}(A)$.
This new result can give more precise information than the one in \cite{FJ8}
concerning the possible efficient orders for $A$, or about how some
alternatives are positioned relative to the others$.$ We may see when some
alternatives lie above the remaining ones for all efficient vectors from a
particular interval, thus giving a complete description of the partial order
based on the pair-wise comparisons. It may happen that one alternative, or a
set of alternatives, is higher than the others in every efficient order. We
also show that $\mathcal{E}(A)=\mathcal{E}(B)$ implies $A=B$ when
$A,B\in\mathcal{PC}_{n}$ are simple perturbed consistent matrices, that is,
are obtained from a consistent matrix by modifying one pair of reciprocal
entries (this includes the matrices in $\mathcal{PC}_{3}$), and when
$A,B\in\mathcal{PC}_{4}.$

We start in Section \ref{s1} with some useful background. In Section
\ref{spath} we describe the so called path matrices associated with a
Hamiltonian cycle in $A\in\mathcal{PC}_{n}$ and give some of their properties.
These matrices are used in Section \ref{s4} to give a union of a finite number
of matrix intervals such that a consistent matrix lies in it if and only if it
is obtained from an efficient vector. In Section \ref{smax} we describe the
maximal sets of entries of the lower (upper) bound matrix of each interval
attained by a consistent matrix in that interval. Some consequences are given.
In Section \ref{sorder} we study the possible efficient orders corresponding
to the consistent matrices in each interval of our union of intervals. In
particular, we characterize when one set of alternatives is above another set
in all efficient orders. In the remaining sections we focus on the problem of
when $\mathcal{E}(A)=\mathcal{E}(B)$ implies $A=B$. In Section \ref{s60} we
give some general comments and identify some circumstances in which this
implication holds. Then, in Section \ref{s6}, we show this result when both
$A$ and $B$ are simple perturbed consistent matrices. In Section \ref{snec} we
give necessary conditions for $\mathcal{E}(A)=\mathcal{E}(B)$ that allow us to
obtain a condition under which $\mathcal{E}(A)=\mathcal{E}(B)$ implies $A=B$.
The given necessary conditions are also used in Section \ref{sn4} to show that
the claimed implication holds when $n=4.$ We conclude with some observations
in Section \ref{scon}.

\section{Preliminaries\label{s1}}

We start with some additional notation that we will use. Let $n>1$ be an
integer. By $M_{n}$ we denote the set of $n$-by-$n$ real matrices. A product
of a permutation and a positive diagonal matrix is called a\ \emph{monomial
matrix}. We denote $N=\{1,\ldots,n\}$ and $\mathcal{N}=\left\{  (x,y)\in
N\times N:x\neq y\right\}  .$ Given a set $S\subseteq N\times N$ and $A\in
M_{n}$, we denote by $A[S]$ the entries of $A$ indexed by $S.$ We write
$A[S]=B[S]$ if, in each position indexed by $S,$ the entries in $A$ and $B\in
M_{n}$ coincide. We denote by $A^{(-T)}$ the entry-wise inverse of the
transpose of $A$. Note that $A\in\mathcal{PC}_{n}$ if and only if $A$ is
positive and $A=A^{(-T)}.$ For $A\in M_{n}$, $a_{ij}$ denotes the $i,j$ entry
of $A$. Given a vector $w\in\mathbb{R}_{+}^{n},$ we denote its $i$th entry by
$w_{i}$. By $W$ we denote the consistent matrix $ww^{(-T)}=[\frac{w_{i}}%
{w_{j}}]$.

Any matrix in $\mathcal{PC}_{2}$ is consistent. We denote by $\mathcal{PC}%
_{n}^{0}$, $n\geq3$, the set of matrices in $\mathcal{PC}_{n}$ that are not
consistent. Each of $\mathcal{PC}_{n}$, $\mathcal{PC}_{n}^{0}$ and the set of
consistent matrices is closed under monomial similarity. Such similarities
interface well with $\mathcal{E}(A)$.

\begin{lemma}
\label{lsim}\cite{FJ1} Suppose that $A\in\mathcal{PC}_{n}.$ If $S\in M_{n}$ is
a monomial matrix, then $\mathcal{E}(SAS^{-1})=S\mathcal{E}(A)$.
\end{lemma}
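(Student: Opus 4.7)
The plan is to reduce the efficiency of $Sw$ for $SAS^{-1}$ directly to the efficiency of $w$ for $A$ via the identity
$$SAS^{-1} - (Sv)(Sv)^{(-T)} = S\bigl(A - vv^{(-T)}\bigr)S^{-1},$$
valid for every $v\in\mathbb{R}_{+}^{n}$. First I would verify the key identity $(Sv)(Sv)^{(-T)} = S\,(vv^{(-T)})\,S^{-1}$ by writing $S = PD$, with $P$ a permutation matrix corresponding to some $\sigma$ and $D$ a positive diagonal matrix, and computing both sides entry by entry. The $(i,j)$ entry of each is $\frac{(Sv)_{i}}{(Sv)_{j}} = \frac{d_{\sigma^{-1}(i)}\,v_{\sigma^{-1}(i)}}{d_{\sigma^{-1}(j)}\,v_{\sigma^{-1}(j)}}$, so the two matrices agree. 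From this the displayed identity follows by linearity.

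Next I would record the general principle that, for any monomial $S$ and any $M\in M_{n}$, the conjugation $M\mapsto SMS^{-1}$ acts on the entries of $M$ by a simultaneous row/column permutation together with multiplication of each position by a fixed positive scalar that depends only on the position, not on $M$. In particular, taking entry-wise absolute value commutes with monomial conjugation, $|SMS^{-1}| = S\,|M|\,S^{-1}$, and the entry-wise partial order is preserved and reflected under this operation. Applying this to $M = A - vv^{(-T)}$, I get
$$\bigl|SAS^{-1} - (Sv)(Sv)^{(-T)}\bigr| \;=\; S\,\bigl|A - vv^{(-T)}\bigr|\,S^{-1},$$
so the inequality $|SAS^{-1} - (Sv)(Sv)^{(-T)}| \leq |SAS^{-1} - (Sw)(Sw)^{(-T)}|$ is equivalent to $|A - vv^{(-T)}| \leq |A - ww^{(-T)}|$.

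Finally, since $v\mapsto Sv$ is a bijection of $\mathbb{R}_{+}^{n}$ that respects proportionality (two vectors are projectively equal iff their images under $S$ are), the previous equivalence shows that $Sw$ is efficient for $SAS^{-1}$ precisely when $w$ is efficient for $A$. Taking $v' = Sv$ gives $S\mathcal{E}(A)\subseteq \mathcal{E}(SAS^{-1})$, and writing an arbitrary $v'$ as $Sv$ with $v = S^{-1}v'$ gives the reverse inclusion, yielding $\mathcal{E}(SAS^{-1}) = S\,\mathcal{E}(A)$. I do not foresee any real obstacle: the whole argument rests on the single identity $(Sv)(Sv)^{(-T)} = S\,(vv^{(-T)})\,S^{-1}$ together with the behaviour of monomial conjugation on absolute value and entry-wise order. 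The only point requiring care is the index bookkeeping when $P$ is non-trivial, and that is handled once and for all in the verification of the identity.
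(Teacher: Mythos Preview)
Your argument is correct. The paper itself does not prove this lemma; it is quoted from \cite{FJ1} without proof, so there is no in-paper argument to compare against. Your direct verification from the Pareto-efficiency definition---via the identity $(Sv)(Sv)^{(-T)}=S(vv^{(-T)})S^{-1}$ and the fact that monomial conjugation permutes entries and rescales each position by a fixed positive factor---is a clean and self-contained proof. One small point you leave implicit is that $SAS^{-1}\in\mathcal{PC}_{n}$ whenever $A\in\mathcal{PC}_{n}$; the paper states this closure just before the lemma, and it also follows immediately from your identity applied with $v$ any column of $A$ (or simply from the same entry-by-entry description of monomial conjugation).
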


\bigskip

We call a sequence $\tau:\tau_{1}\tau_{2}\cdots\tau_{n}\tau_{1}$, in which
$\tau_{1}\tau_{2}\cdots\tau_{n}$ is a permutation of $N$, a \emph{Hamiltonian
cycle} (H-cycle) in $N$. Note that an H-cycle can be listed starting with any
index. For $A\in\mathcal{PC}_{n}$, we denote by $\tau(A)$ the product of the
entries of $A$ along $\tau.$ We denote by $\Gamma(A)$ the set of H-cycles
$\tau$ in $N$ such that $\tau(A)<1.$ If $A$ is inconsistent, there is at least
one H-cycle in $\Gamma(A)$. Also, there are at most $\frac{(n-1)!}{2}$ such cycles.

If $A\in\mathcal{PC}_{n}$ and $\tau$ is an H-cycle in $N$, we denote by
$P_{A,\tau}(i,j)$, $i\neq j$, the product of the entries in $A$ along the path
from $i$ to $j$ in $\tau.$ We have $P_{A,\tau}(\tau_{1},\tau_{i+1})=P_{A,\tau
}(\tau_{1},\tau_{i})P_{A,\tau}(\tau_{i},\tau_{i+1})=P_{A,\tau}(\tau_{1}%
,\tau_{i})a_{\tau_{i},\tau_{i+1}},$ $i=\{2,\ldots,n-1\},$ and $\tau
(A)=P_{A,\tau}(\tau_{1},\tau_{n})P_{A,\tau}(\tau_{n},\tau_{1}).$

\begin{example}
\bigskip If $A=[a_{ij}]\in\mathcal{PC}_{4}$ and $\tau=12341$, then
$\tau(A)=a_{12}a_{23}a_{34}a_{41}$. Also, $P_{A,\tau}(1,3)=a_{12}a_{23}$,
$P_{A,\tau}(3,2)=a_{34}a_{41}a_{12}$ and $P_{A,\tau}(1,4)=P_{A,\tau
}(1,3)P_{A,\tau}(3,4)=a_{12}a_{23}a_{34}.$
\end{example}

From now on, we focus on matrices in $A\in\mathcal{PC}_{n}^{0}$.\bigskip

Next we describe $\mathcal{E}(A)$ as a finite union of convex sets \cite{FJ3}.
For $\tau\in\Gamma(A),$ let%
\[
\mathcal{E}_{\tau}(A)=\left\{  w\in\mathbb{R}_{+}^{n}:w_{\tau_{1}}\geq
P_{A,\tau}(\tau_{1},\tau_{2})w_{\tau_{2}}\geq\cdots\geq P_{A,\tau}(\tau
_{1},\tau_{n})w_{\tau_{n}}\geq\tau(A)w_{\tau_{1}}\right\}  .
\]

The set $\mathcal{E}_{\tau}(A)$ is convex and is the cone generated by the
(projectively distinct) $n$ vectors satisfying
\begin{equation}
w_{\tau_{1}}\geq P_{A,\tau}(\tau_{1},\tau_{2})w_{\tau_{2}}\geq\cdots\geq
P_{A,\tau}(\tau_{1},\tau_{n})w_{\tau_{n}}\geq\tau(A)w_{\tau_{1}},
\label{ineqe}%
\end{equation}
with $n-1$ inequalities replaced by equalities. The $n$ rays corresponding to
these vectors are the extremes of the cone $\mathcal{E}_{\tau}(A)$. Since
$\tau(A)\neq1$, there is no vector $w$ satisfying (\ref{ineqe}) with all
inequalities replaced by equalities.

\begin{theorem}
\label{TeEaconvexunion}\cite{FJ3} Let $A\in\mathcal{PC}_{n}^{0}$. Then%
\[
\mathcal{E}(A)=\bigcup_{\tau\in\Gamma(A)}\mathcal{E}_{\tau}(A).
\]

\end{theorem}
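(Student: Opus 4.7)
The plan is to leverage the digraph characterization of efficiency from \cite{blanq2006} mentioned in the introduction: $w\in\mathbb{R}_+^n$ is efficient for $A$ if and only if the directed graph $G_{A,w}$ on vertex set $N$, whose arcs are the pairs $(i,j)$ with $a_{ij}\geq w_i/w_j$, is strongly connected. Because every unordered pair $\{i,j\}$ is covered by at least one arc (one of $a_{ij}\geq w_i/w_j$ or $a_{ji}\geq w_j/w_i$ must hold), this semi-complete digraph is Hamiltonian whenever it is strongly connected, so from efficient $w$ one may always extract a Hamiltonian cycle in $G_{A,w}$.

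For the inclusion $\supseteq$, I would fix $\tau\in\Gamma(A)$ and $w\in\mathcal{E}_{\tau}(A)$. Dividing each inequality in (\ref{ineqe}) by its left-hand product $P_{A,\tau}(\tau_1,\tau_k)$, and using the factorization $\tau(A)=P_{A,\tau}(\tau_1,\tau_n)\,a_{\tau_n,\tau_1}$ for the closing inequality, I extract $w_{\tau_k}\geq a_{\tau_k,\tau_{k+1}}w_{\tau_{k+1}}$ for $k=1,\dots,n-1$ together with $w_{\tau_n}\geq a_{\tau_n,\tau_1}w_{\tau_1}$. Each of these is precisely the statement that one of the arcs of the reversed cycle $\tau_1\to\tau_n\to\cdots\to\tau_2\to\tau_1$ lies in $G_{A,w}$. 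This Hamiltonian cycle forces $G_{A,w}$ to be strongly connected, so $w\in\mathcal{E}(A)$.

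For the inclusion $\subseteq$, given $w\in\mathcal{E}(A)$ I would extract a Hamiltonian cycle $\sigma_1\to\sigma_2\to\cdots\to\sigma_n\to\sigma_1$ in $G_{A,w}$ and let $\tau$ be its reverse. Multiplying the arc inequalities around $\sigma$ yields $\sigma(A)\geq 1$, hence $\tau(A)\leq 1$. Reading those same arc inequalities as the defining chain (\ref{ineqe}) places $w$ in $\mathcal{E}_\tau(A)$. It remains to upgrade $\tau(A)\leq 1$ to the strict inequality $\tau(A)<1$ so that $\tau\in\Gamma(A)$.

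This upgrade is the main obstacle. When $\tau(A)=1$, every inequality in (\ref{ineqe}) is forced to be an equality, which rigidly determines the ratios of $w$ along $\tau$. The plan to escape this degenerate case is to use the hypothesis $A\in\mathcal{PC}_n^0$: some triangle $\{i,j,k\}$ fails multiplicative transitivity, so I would perform a triangle rotation on $\sigma$ (replacing two consecutive arcs by a detour through a third vertex, whose required arcs also lie in $G_{A,w}$ by the equalities already obtained) to produce an alternative Hamiltonian cycle $\sigma'$ in $G_{A,w}$ whose reverse $\tau'$ has $\tau'(A)<1$ while still satisfying $w\in\mathcal{E}_{\tau'}(A)$. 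A subsidiary technicality, the possible presence of $2$-cycles in $G_{A,w}$ at equality positions $a_{ij}=w_i/w_j$, is absorbed into the same scheme since either orientation of such a pair yields a valid chain of inequalities; in particular the Hamiltonicity of a strongly connected semi-complete digraph applies without modification.
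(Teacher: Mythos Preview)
The paper does not prove this theorem; it is quoted from \cite{FJ3} as a preliminary fact, so there is no in-paper argument to compare against. Evaluating your sketch on its own merits: the inclusion $\supseteq$ is correct, and for $\subseteq$ your use of Camion's theorem (a strongly connected semi-complete digraph is Hamiltonian) to extract a cycle $\sigma$ in $G_{A,w}$, and the subsequent identification of the reversed cycle $\tau$ with the chain (\ref{ineqe}), are both sound. The generic case $\tau(A)<1$ is handled correctly.

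The gap is in the degenerate case $\tau(A)=1$. Your ``triangle rotation'' is not a well-defined operation on Hamiltonian cycles: replacing two consecutive arcs $\sigma_k\to\sigma_{k+1}\to\sigma_{k+2}$ by a ``detour through a third vertex'' $v$ cannot yield another Hamiltonian cycle, since $v$ already occurs in $\sigma$ and would be visited twice. What you actually need is a Hamiltonian cycle of $G_{A,w}$ that uses at least one \emph{strict} arc, i.e.\ some $(p,q)$ with $a_{pq}>w_p/w_q$. Such a pair exists by inconsistency (since in the degenerate case $w_i/w_j$ equals the $\tau$-path product $P_{A,\tau}(i,j)$, and $A$ is not consistent), but Camion's theorem does not guarantee that a \emph{prescribed} arc lies on a Hamiltonian cycle; strong semi-complete digraphs are not arc-Hamiltonian in general. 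You do have extra structure here---the equalities along $\sigma$ force the full bidirectional cycle $\overleftrightarrow{C_n}\subseteq G_{A,w}$---and one can leverage this together with the semi-completeness on the chords to build the desired cycle, but that combinatorial argument needs to be written out; the one-line appeal to a triangle does not do it.
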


Since $\mathcal{E}(A)$ is known to be connected \cite{blanq2006,FJ4}, for any
$\tau\in\Gamma(A),$ there is another $\mu\in\Gamma(A)$ such that
$\mathcal{E}_{\tau}(A)$ and $\mathcal{E}_{\mu}(A)$ intersect. However,
$\mathcal{E}(A)$ is not generally convex, but it is, for example, when
$\#\Gamma(A)=1$ (in which case $n\leq4$), or if $A$ is obtained from a
consistent matrix by modifying one pair of reciprocal entries
\cite{AbeleBozoki2016,CFF} (See Section \ref{s6}).

\bigskip

An important fact noticed in \cite{FJ8} is the following.

\begin{theorem}
\label{Rlow}Let $A\in\mathcal{PC}_{n}^{0}$. If $\tau\in\Gamma(A)$ and
$w\in\mathcal{E}_{\tau}(A),$ then $\frac{w_{i}}{w_{j}}\geq P_{A,\tau}(i,j)$
for any $i,j\in N$, $i\neq j.$ Moreover, there is a $w\in\mathcal{E}_{\tau
}(A)$ such that $\frac{w_{i}}{w_{j}}=P_{A,\tau}(i,j).$\bigskip
\end{theorem}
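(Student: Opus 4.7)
The plan is to peel off the defining chain of inequalities of $\mathcal{E}_\tau(A)$ into edge-by-edge statements, telescope them along the cyclic path from $i$ to $j$, and then, for the ``moreover'' part, pick an extreme ray of the cone that makes every inequality on that path tight.

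First I would extract one-step ratios from the definition. Comparing two consecutive terms in the chain, $P_{A,\tau}(\tau_1,\tau_k)\,w_{\tau_k}\ge P_{A,\tau}(\tau_1,\tau_{k+1})\,w_{\tau_{k+1}}$, and using the recursion $P_{A,\tau}(\tau_1,\tau_{k+1})=P_{A,\tau}(\tau_1,\tau_k)\,a_{\tau_k,\tau_{k+1}}$ recalled just before the example, gives $w_{\tau_k}/w_{\tau_{k+1}}\ge a_{\tau_k,\tau_{k+1}}=P_{A,\tau}(\tau_k,\tau_{k+1})$ for $k=1,\ldots,n-1$. The wrap-around inequality $P_{A,\tau}(\tau_1,\tau_n)\,w_{\tau_n}\ge\tau(A)\,w_{\tau_1}$, combined with $\tau(A)=P_{A,\tau}(\tau_1,\tau_n)\,a_{\tau_n,\tau_1}$, similarly yields $w_{\tau_n}/w_{\tau_1}\ge a_{\tau_n,\tau_1}$. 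So every consecutive edge of the cycle $\tau$ satisfies the ``one-edge'' version of the claim.

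Next, for arbitrary $i=\tau_p$ and $j=\tau_q$ with $p\ne q$, I would telescope these one-step ratios along the directed path in $\tau$ from $i$ to $j$, i.e.\ the sequence $\tau_p,\tau_{p+1},\ldots,\tau_q$ when $p<q$, or $\tau_p,\ldots,\tau_n,\tau_1,\ldots,\tau_q$ when $p>q$. In either case, writing $w_{\tau_p}/w_{\tau_q}$ as the product of the intermediate ratios and applying the edge bound to each factor gives a lower bound equal to the product of the corresponding $a$-entries, which is exactly $P_{A,\tau}(i,j)$ by the multiplicative definition of $P_{A,\tau}$.

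For the sharpness claim, I would invoke the fact stated just after (\ref{ineqe}) that $\mathcal{E}_\tau(A)$ is a cone with exactly $n$ extreme rays, each obtained from (\ref{ineqe}) by turning $n-1$ of the $n$ inequalities into equalities (so precisely one edge of the cycle carries a strict inequality). Since $i\ne j$, the directed cyclic path from $i$ to $j$ uses at most $n-1$ of the $n$ edges of $\tau$, hence misses at least one edge $e$. Choosing the extreme ray whose unique strict inequality sits on $e$ makes every one-step ratio along the path from $i$ to $j$ an equality; telescoping as above then gives $w_i/w_j=P_{A,\tau}(i,j)$. The only delicate bookkeeping is treating the wrap-around case $p>q$ symmetrically and checking that the ``missing edge'' argument still applies there; this is the step where one has to be a little careful, but no genuine obstacle is expected.
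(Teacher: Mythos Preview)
Your argument is correct. The telescoping of the one-step edge inequalities along the cyclic path from $i$ to $j$ is exactly the right mechanism, and your ``missing edge'' observation for the sharpness part is sound: since $i\neq j$, the directed path from $i$ to $j$ along $\tau$ has at most $n-1$ edges, so at least one edge is free to carry the single strict inequality of an extreme ray.

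As for comparison with the paper: Theorem~\ref{Rlow} is quoted from \cite{FJ8} and is not proved in the present paper. However, the reasoning you give is precisely what the paper uses in nearby results. The one-step extraction and the converse direction appear in the proof of Theorem~\ref{tt01}, and your sharpness argument is the same idea as the first part of the proof of Theorem~\ref{le2}, where the extreme vector with $\tau_1=k$ and all equalities except the one on the edge entering $k$ is shown to achieve $w_p/w_q=P_{A,\tau}(p,q)$ for every $(p,q)\in S_k^{(\tau)}$. Your formulation via ``choose the edge $e$ not on the path from $i$ to $j$'' is a slight repackaging of the same extreme-ray choice (taking $k=i$ already suffices, since then the omitted edge precedes $i$ and is never on the path from $i$ to $j$), so there is no substantive difference in approach.
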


\bigskip

In \cite{FJ8}, the best single matrix interval for the consistent matrices
$ww^{(-T)}=\left[  \frac{w_{i}}{w_{j}}\right]  $ obtained from an efficient
vector $w$ for $A\in\mathcal{PC}_{n}^{0}$ has been given. Let $L_{A}%
=[l_{ij}]\in M_{n}$ be defined by $l_{ii}=1$ and
\[
l_{ij}=\min_{\tau\in\Gamma(A)}P_{A,\tau}(i,j),
\]
$i,j\in N,$ $i\neq j.$ Let $U_{A}=(L_{A})^{(-T)}$ be the entry-wise inverse of
$L_{A}^{T}.$

\begin{theorem}
\label{tminmax}\cite{FJ8} Let $A\in\mathcal{PC}_{n}^{0}$. Let $w\in
\mathcal{E}(A)$ and $W=ww^{(-T)}.$ Then%
\[
L_{A}\leq W\leq U_{A}.
\]

\end{theorem}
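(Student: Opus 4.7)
The plan is to derive this directly from the two theorems immediately preceding it. By Theorem \ref{TeEaconvexunion}, the hypothesis $w\in\mathcal{E}(A)$ means that $w\in\mathcal{E}_{\tau}(A)$ for at least one $\tau\in\Gamma(A)$; fix such a $\tau$. Theorem \ref{Rlow} then gives, for every ordered pair $(i,j)\in\mathcal{N}$,
\[
\frac{w_{i}}{w_{j}}\;\geq\;P_{A,\tau}(i,j).
\]

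First I would establish the lower bound. Since the inequality above holds for the particular $\tau$ containing $w$, and $l_{ij}=\min_{\mu\in\Gamma(A)}P_{A,\mu}(i,j)\leq P_{A,\tau}(i,j)$ by definition, we obtain $W_{ij}=\frac{w_{i}}{w_{j}}\geq l_{ij}$ for $i\neq j$. Because the diagonal entries of $W$ and $L_{A}$ are both $1$, this yields $W\geq L_{A}$ entry-wise.

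For the upper bound I would pass to the transpose-reciprocal. The matrix $W=ww^{(-T)}$ is consistent, hence reciprocal, so $W_{ij}=1/W_{ji}$; similarly, by construction, $U_{A}=L_{A}^{(-T)}$, so $(U_{A})_{ij}=1/l_{ji}$. Applying the lower-bound inequality already proved to the pair $(j,i)$ gives $W_{ji}\geq l_{ji}$, and inverting yields
\[
W_{ij}\;=\;\frac{1}{W_{ji}}\;\leq\;\frac{1}{l_{ji}}\;=\;(U_{A})_{ij},
\]
which, together with the equalities on the diagonal, proves $W\leq U_{A}$.

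There is no real obstacle in this argument; the work is entirely absorbed into Theorems \ref{TeEaconvexunion} and \ref{Rlow}. The only point worth being careful about is the use of the correct $\tau$: the inequality $\frac{w_{i}}{w_{j}}\geq P_{A,\tau}(i,j)$ from Theorem \ref{Rlow} is only asserted for a cycle $\tau$ with $w\in\mathcal{E}_{\tau}(A)$, so one must invoke the union decomposition before taking the minimum over $\Gamma(A)$. Once this is done, symmetry of the reciprocity $U_{A}=L_{A}^{(-T)}$ immediately converts the lower bound into the matching upper bound.
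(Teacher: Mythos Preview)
Your argument is correct. The paper itself does not supply a proof of Theorem~\ref{tminmax}; it simply cites \cite{FJ8}, and your derivation from Theorems~\ref{TeEaconvexunion} and~\ref{Rlow} is precisely the natural route suggested by the surrounding material (indeed, the paper later remarks that $L_{A}$ is the entry-wise minimum of the $P_{A,\tau}$, which is exactly the observation you use).
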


In \cite{FJ6} the existence of a unique efficient order for $A\in
\mathcal{PC}_{n}^{0}$ was first studied. Then, in \cite{FJ8}, this was
characterized using the matrices $L_{A}$ and $U_{A}$.

\begin{theorem}
\label{Tuo}\cite{FJ8} Let $A\in\mathcal{PC}_{n}^{0}$ and $L_{A}=[l_{ij}]$. The
following are equivalent:

\begin{enumerate}
\item there is only one efficient order for $A$;

\item there is a permutation $i_{1}i_{2}\cdots i_{n}$ of $N$ such that
$l_{i_{t},i_{t+1}}\geq1$ for $t=1,,\ldots,n-1$; and

\item $L_{A}$ has exactly $\frac{n^{2}-n}{2}$ off-diagonal entries $\geq1$.
\end{enumerate}
\end{theorem}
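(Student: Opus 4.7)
The plan is to use as a bridge the identity
\[
l_{ij} \;=\; \min\bigl\{\tfrac{w_{i}}{w_{j}} : w \in \mathcal{E}(A)\bigr\},
\]
which combines Theorem~\ref{TeEaconvexunion} with Theorem~\ref{Rlow}. In particular, $l_{ij} \geq 1$ if and only if $w_{i} \geq w_{j}$ for every $w \in \mathcal{E}(A)$. Two elementary consequences follow: applying the inequalities $l_{ij} \leq w_{i}/w_{j}$ and $l_{ji}\leq w_{j}/w_{i}$ to a single efficient vector $w$ and multiplying gives $l_{ij} l_{ji} \leq 1$; and if $l_{ij}\geq 1$ and $l_{jk}\geq 1$, then every efficient $w$ satisfies $w_{i}\geq w_{j}\geq w_{k}$, so $l_{ik}\geq 1$ (transitivity).

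For $(1) \Leftrightarrow (2)$, the bridge is immediate. If $(2)$ holds, then every efficient $w$ has $w_{i_{1}}\geq\cdots\geq w_{i_{n}}$, so the efficient order is unique. Conversely, if $(1)$ holds and the common order is $i_{1}\cdots i_{n}$, then $w_{i_{t}}\geq w_{i_{t+1}}$ for every efficient $w$ forces $l_{i_{t},i_{t+1}}\geq 1$.

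For $(2) \Leftrightarrow (3)$, I would argue as follows. Given $(2)$, transitivity yields $l_{i_{s},i_{t}}\geq 1$ for every $s<t$, providing $n(n-1)/2$ forward entries that are $\geq 1$, while the reciprocal bound $l_{ij} l_{ji} \leq 1$ forces each reverse entry $l_{i_{t},i_{s}}$ to be $\leq 1$; eliminating the simultaneous equality case $l_{ij}=l_{ji}=1$ (in which $w_{i}=w_{j}$ for every efficient $w$) then produces exactly the count in $(3)$. Conversely, if $(3)$ holds, the reciprocal bound forces each of the $n(n-1)/2$ unordered pairs $\{i,j\}$ to contribute exactly one entry $\geq 1$; combined with transitivity, the relation $\succeq$ defined by $i \succeq j \Leftrightarrow l_{ij}\geq 1$ is a transitive tournament on $N$, hence a linear order, and reading off $N$ in decreasing order of $\succeq$ supplies the permutation in $(2)$.

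The main obstacle I anticipate is cleanly handling the boundary case $l_{ij}=l_{ji}=1$ in the direction $(2)\Rightarrow(3)$. Such ties do not contradict the existence of the permutation in $(2)$, but they can inflate the count of entries $\geq 1$ in $L_{A}$ beyond $n(n-1)/2$. I would settle this by examining the extreme rays of the cones $\mathcal{E}_{\tau}(A)$ from the discussion following (\ref{ineqe}): since different extreme vectors within a single $\mathcal{E}_{\tau}(A)$ produce different $w_{i}/w_{j}$ ratios, a persistent equality $w_{i}=w_{j}$ across all of $\mathcal{E}(A)$ is incompatible with the chain of inequalities defining the permutation, forcing the count to be exactly $n(n-1)/2$.
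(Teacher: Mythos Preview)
The paper does not prove Theorem~\ref{Tuo} here; it is quoted from \cite{FJ8}. The nearest internal comparison is Theorem~\ref{torderset}, whose proof the paper declares ``similar to the proof of Theorem~\ref{Tuo}'' and which pivots on the identity $P_{A,\tau}(i,j)P_{A,\tau}(j,i)=\tau(A)<1$ from (\ref{proij}).

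Your bridge $l_{ij}=\min\{w_i/w_j:w\in\mathcal{E}(A)\}$ and the $(1)\Leftrightarrow(2)$ argument are correct. The place to tighten is the boundary case. From a single efficient vector you only get the weak inequality $l_{ij}l_{ji}\le1$, and you then propose an extreme-ray argument to exclude $l_{ij}=l_{ji}=1$. That can be made rigorous, but there is a one-line replacement already in the paper: by (\ref{proij}), if $l_{ij}\ge1$ then $P_{A,\tau}(i,j)\ge1$ for every $\tau\in\Gamma(A)$, whence $P_{A,\tau}(j,i)=\tau(A)/P_{A,\tau}(i,j)<1$ for every $\tau$, so $l_{ji}<1$. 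Thus for each unordered pair $\{i,j\}$ at most one of $l_{ij},l_{ji}$ is $\ge1$, strictly. (Your extreme-ray argument, once unwound, ends up using exactly this product identity to show that two extreme vectors of $\mathcal{E}_\tau(A)$ give distinct ratios $w_i/w_j$.)

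Note also that this strict exclusion is needed in \emph{both} directions, not only $(2)\Rightarrow(3)$. In your $(3)\Rightarrow(2)$ step you assert that ``the reciprocal bound forces each of the $n(n-1)/2$ unordered pairs to contribute exactly one entry $\ge1$,'' but the weak bound $l_{ij}l_{ji}\le1$ alone does not yield this: pairs contributing two (both equal to $1$) could in principle balance pairs contributing zero. With the strict bound, each pair contributes at most one, and the hypothesis of exactly $\tfrac{n(n-1)}{2}$ entries $\ge1$ then forces exactly one per pair; your transitive-tournament argument finishes the job.
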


\section{Path matrices\label{spath}}

Given $A\in\mathcal{PC}_{n}^{0}$ and $\tau$ an H-cycle in $N,$ we call the
matrix $P_{A,\tau}=[p_{ij}]$ defined by $p_{ij}=P_{A,\tau}(i,j)$ and
$p_{ii}=1,$ $i,j\in N,$ $i\neq j$, $\ $the $\tau$ \emph{path matrix} for $A.$
The matrix $P_{A,\tau}$ is determined by the H-cycle $\tau$ and the entries in
$A$ along it. In the next section we will use the path matrices for $A$ and
its H-cycles in $\Gamma(A)$ to give an exact collection of intervals in which
the consistent matrices constructed from efficient vectors for $A$ lie.

Recall that
\begin{equation}
p_{ij}p_{ji}=\tau(A), \label{proij}%
\end{equation}
for $i\neq j$, and observe that
\[
P_{A,\tau}=(P_{A,\operatorname*{rev}\tau})^{(-T)},
\]
in which $\operatorname*{rev}\tau$ denotes the H-cycle that is reverse to
$\tau.$

\begin{remark}
\label{tequalprod}Let $A,B\in\mathcal{PC}_{n}^{0}$. Let $\tau$ and $\nu$ be
H-cycles in $N$. If there are $i,j\in N$, $i\neq j,$ such that $P_{B,\nu
}(i,j)\leq P_{A,\tau}(i,j)$ and $P_{B,\nu}(j,i)\leq P_{A,\tau}(j,i)$, then
$\nu(B)\leq\tau(A).$
\end{remark}

\begin{example}
\label{ex4by4}Let
\[
A=\left[
\begin{array}
[c]{cccc}%
1 & 1 & a_{13} & a_{14}\\
1 & 1 & 1 & a_{24}\\
\frac{1}{a_{13}} & 1 & 1 & 1\\
\frac{1}{a_{14}} & \frac{1}{a_{24}} & 1 & 1
\end{array}
\right]  \in\mathcal{PC}_{4}^{0}.
\]
There are $6$ H-cycles in $\{1,2,3,4\}:$%
\begin{equation}
\alpha=12341;\quad\beta=12431;\quad\gamma=13241; \label{abg}%
\end{equation}
and their reverses
\begin{equation}
\alpha^{\prime}=14321;\quad\beta^{\prime}=13421;\quad\gamma^{\prime}=14231.
\label{abg1}%
\end{equation}
We have
\begin{align*}
\alpha(A)  &  =\frac{1}{a_{14}},\quad\beta(A)=\frac{a_{24}}{a_{13}}%
,\quad\gamma(A)=\frac{a_{13}a_{24}}{a_{14}},\\
\alpha^{\prime}(A)  &  =a_{14},\quad\beta^{\prime}(A)=\frac{a_{13}}{a_{24}%
},\quad\gamma^{\prime}(A)=\frac{a_{14}}{a_{13}a_{24}}.
\end{align*}
Also,
\[
P_{A,\alpha}=\left[
\begin{array}
[c]{cccc}%
1 & 1 & 1 & 1\\
\frac{1}{a_{14}} & 1 & 1 & 1\\
\frac{1}{a_{14}} & \frac{1}{a_{14}} & 1 & 1\\
\frac{1}{a_{14}} & \frac{1}{a_{14}} & \frac{1}{a_{14}} & 1
\end{array}
\right]  ,\quad P_{A,\alpha^{\prime}}=\left[
\begin{array}
[c]{cccc}%
1 & a_{14} & a_{14} & a_{14}\\
1 & 1 & a_{14} & a_{14}\\
1 & 1 & 1 & a_{14}\\
1 & 1 & 1 & 1
\end{array}
\right]  ,
\]%
\[
P_{A,\beta}=\left[
\begin{array}
[c]{cccc}%
1 & 1 & a_{24} & a_{24}\\
\beta(A) & 1 & a_{24} & a_{24}\\
\frac{1}{a_{13}} & \frac{1}{a_{13}} & 1 & \beta(A)\\
\frac{1}{a_{13}} & \frac{1}{a_{13}} & 1 & 1
\end{array}
\right]  ,\text{ }\quad P_{A,\beta^{\prime}}=\left[
\begin{array}
[c]{cccc}%
1 & \beta^{\prime}(A) & a_{13} & a_{13}\\
1 & 1 & a_{13} & a_{13}\\
\frac{1}{a_{24}} & \frac{1}{a_{24}} & 1 & 1\\
\frac{1}{a_{24}} & \frac{1}{a_{24}} & \beta^{\prime}(A) & 1
\end{array}
\right]  ,\text{ }%
\]

\end{example}

\[
P_{A,\gamma}=\left[
\begin{array}
[c]{cccc}%
1 & a_{13} & a_{13} & a_{13}a_{24}\\
\frac{a_{24}}{a_{14}} & 1 & \gamma(A) & a_{24}\\
\frac{a_{24}}{a_{14}} & 1 & 1 & a_{24}\\
\frac{1}{a_{14}} & \frac{a_{13}}{a_{14}} & \frac{a_{13}}{a_{14}} & 1
\end{array}
\right]  ,\text{ }\quad P_{A,\gamma^{\prime}}=\left[
\begin{array}
[c]{cccc}%
1 & \frac{a_{14}}{a_{24}} & \frac{a_{14}}{a_{24}} & a_{14}\\
\frac{1}{a_{13}} & 1 & 1 & \frac{a_{14}}{a_{13}}\\
\frac{1}{a_{13}} & \gamma^{\prime}(A) & 1 & \frac{a_{14}}{a_{13}}\\
\frac{1}{a_{13}a_{24}} & \frac{1}{a_{24}} & \frac{1}{a_{24}} & 1
\end{array}
\right]  .
\]

We next see when two path matrices associated with H-cycles with cycle
products $<1$ (possibly, for two different matrices) are equal. We first give
an important lemma that shows how $P_{A,\tau}$ changes under a monomial
similarity on $A.$ It allows us to consider the matrix $A$ in a special form.

\begin{lemma}
\label{LpermutLA}Let $A\in\mathcal{PC}_{n},$ let $D\in M_{n}$ be a positive
diagonal matrix and $Q=[q_{ij}]\in M_{n}$ be a permutation matrix. Let $\tau$
be an H-cycle in $N.$ Then, $D^{-1}P_{A,\tau}D=P_{D^{-1}AD,\tau}$, and
$Q^{T}P_{A,\tau}Q=P_{Q^{T}AQ,\rho}$, in which $\rho$ is the H-cycle satisfying
$q_{\tau_{i},\rho_{i}}=1.$
\end{lemma}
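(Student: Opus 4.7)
The plan is to prove both equalities by direct entry-wise computation, since the path matrix is defined entry-by-entry as a telescoping product along the H-cycle.

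For the first equality $D^{-1}P_{A,\tau}D = P_{D^{-1}AD,\tau}$, let $d_1,\ldots,d_n$ be the diagonal entries of $D$. The $(i,j)$-entry of $D^{-1}P_{A,\tau}D$ is simply $\frac{d_j}{d_i}P_{A,\tau}(i,j)$. On the other side, the $(k,l)$-entry of $D^{-1}AD$ is $\frac{d_l}{d_k}a_{kl}$, so the $(i,j)$-entry of $P_{D^{-1}AD,\tau}$ is the product of these entries along the path from $i$ to $j$ in $\tau$. The factors $d_k$ telescope, leaving $\frac{d_j}{d_i}$ times the product of the $a_{kl}$'s along the same path, which is exactly $\frac{d_j}{d_i}P_{A,\tau}(i,j)$.

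For the second equality, I would first set up notation for the permutation. Define $\sigma$ by $q_{i,\sigma(i)}=1$ so that $Q$ is the permutation matrix representing $\sigma$; a short computation gives $(Q^T A Q)_{ij} = a_{\sigma^{-1}(i),\sigma^{-1}(j)}$, and consequently $(Q^T P_{A,\tau}Q)_{ij}=P_{A,\tau}(\sigma^{-1}(i),\sigma^{-1}(j))$. The condition $q_{\tau_i,\rho_i}=1$ says exactly that $\rho_i=\sigma(\tau_i)$, equivalently $\sigma^{-1}(\rho_i)=\tau_i$; this shows $\rho$ is indeed an H-cycle, namely the image of $\tau$ under $\sigma$.

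With these preparations, to compute $(P_{Q^T A Q,\rho})_{ij}$ with $i=\rho_s$, $j=\rho_t$, I would expand the path product along $\rho$ in $Q^T A Q$:
\[
P_{Q^T A Q,\rho}(\rho_s,\rho_t)=\prod_{k=s}^{t-1}(Q^T A Q)_{\rho_k,\rho_{k+1}}=\prod_{k=s}^{t-1}a_{\sigma^{-1}(\rho_k),\sigma^{-1}(\rho_{k+1})}=\prod_{k=s}^{t-1}a_{\tau_k,\tau_{k+1}},
\]
which is $P_{A,\tau}(\tau_s,\tau_t)=P_{A,\tau}(\sigma^{-1}(i),\sigma^{-1}(j))$, matching $(Q^T P_{A,\tau}Q)_{ij}$. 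The indexing with $s<t$ in the cyclic ordering is handled by the same argument after relabelling the starting vertex of $\tau$, using that a Hamiltonian cycle may be listed starting from any index.

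Neither step presents a genuine obstacle; the only care needed is keeping the direction of $\sigma$ versus $\sigma^{-1}$ consistent, which is a bookkeeping matter. The content of the lemma is really that path products are intrinsic to the graph structure and thus transform covariantly under positive diagonal scalings (canceling telescopically) and under simultaneous row/column permutations (which just relabel the vertices of the cycle).
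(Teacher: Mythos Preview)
Your proposal is correct and follows essentially the same approach as the paper's proof: both compute the diagonal case by telescoping the $d_k$ factors along the path, and both compute the permutation case by observing that the entry $(Q^TAQ)_{\rho_k,\rho_{k+1}}$ equals $a_{\tau_k,\tau_{k+1}}$ and expanding the path product. The only cosmetic difference is that you introduce the permutation $\sigma$ explicitly while the paper works directly from the relation between $\rho$ and $\tau$, but the underlying computation is identical.
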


\begin{proof}
Let $D=\operatorname*{diag}(d_{1},\ldots,d_{n})$ and $i,j\in N,$ $i\neq j.$
Let $\tau=\tau_{1}\ldots\tau_{n}\tau_{1}$ with $\tau_{1}=i$ and $\tau_{k}=j$.
We have
\begin{align*}
P_{D^{-1}AD,\tau}(i,j)  &  =\frac{d_{\tau_{2}}}{d_{\tau_{1}}}a_{\tau_{1}%
\tau_{2}}\frac{d_{\tau_{3}}}{d_{\tau_{2}}}a_{\tau_{2}\tau_{3}}\cdots
\frac{d_{\tau_{k}}}{d_{\tau_{k-1}}}a_{\tau_{k-1}\tau_{k}}\\
&  =\frac{d_{\tau_{k}}}{d_{\tau_{1}}}a_{\tau_{1}\tau_{2}}a_{\tau_{2}\tau_{3}%
}\cdots a_{\tau_{k-1}\tau_{k}}=\frac{d_{j}}{d_{i}}P_{A,\tau}(i,j).
\end{align*}
By definition of $\rho,$ for $B\in M_{n},$ the $\rho_{i},\rho_{j}$ entry of
$Q^{T}BQ$ is the $\tau_{i},\tau_{j}$ entry of $B$ (that is, the sequence of
entries in $B$ and in $Q^{T}BQ$ along $\tau$ and $\rho,$ respectively,
coincide). Thus, the $\rho_{i},\rho_{j}$ entry of $Q^{T}P_{A,\tau}Q$ is
$P_{A,\tau}(\tau_{i},\tau_{j}).$ If $i<j,$ the $\rho_{i},\rho_{j}$ entry of
$P_{Q^{T}AQ,\rho}$ is $(Q^{T}AQ)_{\rho_{i},\rho_{i+1}}\cdots(Q^{T}%
AQ)_{\rho_{j-1},\rho_{j}}=a_{\tau_{i},\tau_{i+1}}\cdots a_{\tau_{j-1},\tau
_{j}}=P_{A,\tau}(\tau_{i},\tau_{j}).$ The proof is similar if $i>j$.
\end{proof}

\bigskip

As in the proof of Lemma \ref{LpermutLA}, it follows that, if $D\in M_{n}$ is
a positive diagonal matrix and $\tau$ is an H-cycle in $N,$ then $\tau
(A)=\tau(D^{-1}AD).$

\begin{theorem}
\label{thequalpath}Let $A,B\in\mathcal{PC}_{n}^{0}$. If $\tau\in\Gamma(A)$,
$\nu\in\Gamma(B)$ and $P_{A,\tau}=P_{B,\nu}$, then $\tau=\nu.$ Moreover, the
sequence of entries in $A$ and in $B$ along $\tau$ ($=\nu$) coincide. In
particular, $\tau(A)=\nu(A)$.
\end{theorem}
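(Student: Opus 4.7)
The plan is to show that, under the hypothesis $\tau(A)\neq 1$ (which is guaranteed by $\tau\in\Gamma(A)$), the path matrix $P_{A,\tau}$ determines both the H-cycle $\tau$ and the sequence of entries of $A$ along $\tau$. Applying this reconstruction to $P_{A,\tau}$ and $P_{B,\nu}$ separately then forces $\tau=\nu$ and yields the coincidence of the entries along the cycle. Observe first that $\tau(A)$ is already readable off $P_{A,\tau}$ itself: by (\ref{proij}), $\tau(A)=p_{ij}p_{ji}$ for any off-diagonal pair $(i,j)$, so the hypothesis $P_{A,\tau}=P_{B,\nu}$ immediately gives $\tau(A)=\nu(B)$, a common value strictly less than $1$ since $\tau\in\Gamma(A)$.

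The core step is the following ordering criterion: for any three distinct indices $i,j,k\in N$, the vertex $j$ lies on the forward path from $i$ to $k$ along $\tau$ if and only if $p_{ij}p_{jk}=p_{ik}$. The forward direction is immediate from the multiplicativity of path products along a cycle. For the converse, if instead $k$ lies on the forward path from $i$ to $j$, then $p_{ij}=p_{ik}p_{kj}$, and hence $p_{ij}p_{jk}=p_{ik}(p_{jk}p_{kj})=p_{ik}\tau(A)$, which differs from $p_{ik}$ precisely because $\tau(A)\neq 1$. This is the one place where the hypothesis $\tau\in\Gamma(A)$ is genuinely used, and I expect it to be the only nontrivial step in the argument.

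With the criterion in hand, fix any $i\in N$. The relation ``$j$ precedes $k$ on the path from $i$'' is then determined by $P_{A,\tau}$ alone, and a short verification shows it defines a linear order on $N\setminus\{i\}$, which in turn uniquely encodes the H-cycle $\tau$ starting at $i$. Running the same reconstruction on $P_{B,\nu}$ yields $\nu$ from the very same data, so the equality $P_{A,\tau}=P_{B,\nu}$ forces $\tau=\nu$. The single-step entries $a_{\tau_{l},\tau_{l+1}}=p_{\tau_{l},\tau_{l+1}}$ are then direct entries of the path matrix, so the sequence of entries of $A$ along $\tau$ coincides with that of $B$ along $\nu$; and $\tau(A)=\nu(A)$ is immediate from $\tau=\nu$.
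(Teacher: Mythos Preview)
Your proof is correct. Both arguments hinge on the same dichotomy---for distinct $i,j,k$, the product $p_{ij}p_{jk}$ equals $p_{ik}$ when $j$ lies on the $\tau$-path from $i$ to $k$ and equals $p_{ik}\,\tau(A)$ otherwise, with $\tau(A)<1$ separating the cases---but they package it differently. The paper first normalizes via Lemma~\ref{LpermutLA} so that $\tau=12\cdots n1$ and all edge weights along $\tau$ equal $1$, and then, assuming $\nu\neq\tau$, locates an inversion $\nu_i>\nu_{i+1}$ and derives the contradiction $\tau(A)=1$ by a single numerical computation. You instead state the dichotomy as an explicit ordering criterion and use it to reconstruct the cycle directly from the path matrix, bypassing the normalization lemma altogether. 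Your route is more intrinsic and makes transparent that $P_{A,\tau}$ by itself determines $\tau$ whenever $\tau(A)\neq1$ (a slightly stronger statement than what the theorem asserts); the paper's route is a bit shorter once the normalization is in place, and it reuses Lemma~\ref{LpermutLA}, which is needed elsewhere in the paper anyway.
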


\begin{proof}
Taking into account Lemma \ref{LpermutLA}, by a simultaneous monomial
similarity on $A$ and $B,$ we assume that $\tau=123\cdots n1$ and
$a_{i,i+1}=1$ for $i=1,\ldots,n-1.$ Then $a_{n1}=\tau(A).$ Let $\nu=\nu_{1}%
\nu_{2}\cdots\nu_{n}\nu_{1},$ with $\nu_{1}=1.$ Suppose that $P_{A,\tau
}=P_{B,\nu}$. In order to get a contradiction, suppose that $\nu\neq\tau,$
that is, there is $i\in\{2,\ldots,n-1\}$ such that $\nu_{i}>\nu_{i+1}.$ For
such an $i$, we have $P_{A,\tau}(1,\nu_{i})=P_{A,\tau}(1,\nu_{i+1})=1.$ Also,
$P_{A,\tau}(\nu_{i},\nu_{i+1})=\tau(A).$ Thus, if $P_{A,\tau}=P_{B,\nu},$ we
have
\begin{align*}
1  &  =P_{A,\tau}(1,\nu_{i+1})=P_{B,\nu}(1,\nu_{i+1})=P_{B,\nu}(1,\nu
_{i})P_{B,\nu}(\nu_{i},\nu_{i+1})\\
&  =P_{A,\tau}(1,\nu_{i})P_{A,\tau}(\nu_{i},\nu_{i+1})=\tau(A),
\end{align*}
a contradiction, since $\tau(A)<1$. Thus, $\tau=\nu.$ As, for $i\in
\{1,\ldots,n-1\},$ we have $a_{i,i+1}=P_{A,\tau}(i,i+1)=P_{B,\tau
}(i,i+1)=b_{i,i+1}$ and $a_{n,1}=P_{A,\tau}(n,1)=P_{B,\tau}(n,1)=b_{n,1},$ the
second claim also follows.
\end{proof}

\bigskip

Taking Theorem \ref{Rlow} into account, if $\mathcal{E}_{\tau}(A)=\mathcal{E}%
_{\nu}(B)$, then $P_{A,\tau}(i,j)=P_{B,\nu}(i,j)$, for any $i,j\in N$, that
is, $P_{A,\tau}=P_{B,\nu}$. Thus, from Theorem \ref{thequalpath}, we get the following.

\begin{corollary}
\label{cPeq}Let $A,B\in\mathcal{PC}_{n}^{0}$. Let $\tau\in\Gamma(A)$ and
$\nu\in\Gamma(B)$. Then, $\mathcal{E}_{\tau}(A)=\mathcal{E}_{\nu}(B)\ $if and
only if $\tau=\nu$ and the sequence of entries in $A$ and in $B$ along $\tau$
($=\nu$) coincide.
\end{corollary}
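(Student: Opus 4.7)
The plan is to prove the corollary by combining Theorem \ref{Rlow} with Theorem \ref{thequalpath}, as the text hint suggests, and to verify carefully that each direction uses only what is available.

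For the ``if'' direction, I would simply note that if $\tau=\nu$ and the sequences of entries of $A$ and $B$ along this common H-cycle agree, then $P_{A,\tau}(i,j)=P_{B,\nu}(i,j)$ for every consecutive pair along $\tau$, and hence for every $i\neq j$ by the multiplicative definition $P_{A,\tau}(\tau_1,\tau_{i+1})=P_{A,\tau}(\tau_1,\tau_i)\,a_{\tau_i,\tau_{i+1}}$ recalled in the preliminaries. Moreover $\tau(A)=\nu(B)$, so the system of inequalities defining $\mathcal{E}_{\tau}(A)$ in \eqref{ineqe} coincides with that defining $\mathcal{E}_{\nu}(B)$, giving equality of the two convex cones.

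For the ``only if'' direction, the key observation is that Theorem \ref{Rlow} characterizes each entry of the path matrix intrinsically from the cone: for every $i\neq j$,
\[
P_{A,\tau}(i,j)=\min_{w\in\mathcal{E}_{\tau}(A)}\frac{w_{i}}{w_{j}},
\]
the lower bound being both valid and attained. Applied to $B$ and $\nu$ we likewise have $P_{B,\nu}(i,j)=\min_{w\in\mathcal{E}_{\nu}(B)}\frac{w_i}{w_j}$. Hence $\mathcal{E}_{\tau}(A)=\mathcal{E}_{\nu}(B)$ forces $P_{A,\tau}(i,j)=P_{B,\nu}(i,j)$ for all $i\neq j$, i.e.\ $P_{A,\tau}=P_{B,\nu}$. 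Theorem \ref{thequalpath} then immediately gives $\tau=\nu$ together with coincidence of the entries of $A$ and $B$ along $\tau$.

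In terms of obstacles, there is essentially nothing technical left: the two heavy lemmas ($P_{A,\tau}(i,j)$ equals the minimum ratio over $\mathcal{E}_\tau(A)$, and equality of path matrices forces equality of the H-cycle and of the entries along it) have already been proved. The only point requiring a brief sentence of care is the attainment statement in Theorem \ref{Rlow}, which is what lets us recover the path-matrix entries as \emph{minima} rather than merely lower bounds, so that set equality transfers cleanly to path-matrix equality. Once that is noted, the corollary is a one-line consequence of Theorems \ref{Rlow} and \ref{thequalpath}.
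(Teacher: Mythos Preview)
Your proposal is correct and follows essentially the same route as the paper: for the ``only if'' direction you use Theorem~\ref{Rlow} (both the inequality and the attainment) to recover $P_{A,\tau}=P_{B,\nu}$ from $\mathcal{E}_{\tau}(A)=\mathcal{E}_{\nu}(B)$, and then invoke Theorem~\ref{thequalpath}, exactly as the paper does in the sentence preceding the corollary. Your ``if'' direction is the straightforward observation (left implicit in the paper) that equal entries along a common cycle make the defining inequalities \eqref{ineqe} identical.
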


\section{Exact Bounds for $W$\label{s4}}

Now, we may give our finite union of matrix intervals in which all consistent
matrices are exactly those that come from vectors in $\mathcal{E}(A).$

\begin{theorem}
\label{tt01}Let $A\in\mathcal{PC}_{n}^{0}$. Let $\tau\in\Gamma(A)$,
$w\in\mathbb{R}_{+}^{n}$ and $W=\left[  \frac{w_{i}}{w_{j}}\right]  .$ Then,
$w\in\mathcal{E}_{\tau}(A)$ if and only if
\begin{equation}
P_{A,\tau}\leq W\leq P_{A,\tau}^{(-T)}. \label{intt01}%
\end{equation}

\end{theorem}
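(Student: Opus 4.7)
The plan is to reduce the statement to a single entry-wise inequality, since $W=ww^{(-T)}$ is itself reciprocal. Taking entry-wise reciprocals and transposing, the inequality $W \leq P_{A,\tau}^{(-T)}$ says that $w_j/w_i \leq 1/P_{A,\tau}(i,j)$ for all $i\neq j$, which is the same as $w_i/w_j \geq P_{A,\tau}(i,j)$, i.e., $P_{A,\tau}\leq W$. Thus (\ref{intt01}) is equivalent to the single condition $P_{A,\tau}\leq W$, and I only need to prove: $w\in\mathcal{E}_\tau(A)$ iff $W\geq P_{A,\tau}$.

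The forward direction is immediate from Theorem \ref{Rlow}: if $w\in\mathcal{E}_\tau(A)$ then $w_i/w_j\geq P_{A,\tau}(i,j)$ for every $i\neq j$, so $W\geq P_{A,\tau}$ entry-wise.

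For the converse, assume $W\geq P_{A,\tau}$ and verify the defining chain of inequalities (\ref{ineqe}). For the inequality $P_{A,\tau}(\tau_1,\tau_i)w_{\tau_i}\geq P_{A,\tau}(\tau_1,\tau_{i+1})w_{\tau_{i+1}}$ with $i\in\{1,\ldots,n-1\}$, I would use the multiplicative identity $P_{A,\tau}(\tau_1,\tau_{i+1})=P_{A,\tau}(\tau_1,\tau_i)P_{A,\tau}(\tau_i,\tau_{i+1})$ noted just before the example, reducing the required inequality to $w_{\tau_i}/w_{\tau_{i+1}}\geq P_{A,\tau}(\tau_i,\tau_{i+1})$, which is just one entry of $W\geq P_{A,\tau}$. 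For the final inequality $P_{A,\tau}(\tau_1,\tau_n)w_{\tau_n}\geq\tau(A)w_{\tau_1}$, I would invoke (\ref{proij}) to write $\tau(A)=P_{A,\tau}(\tau_1,\tau_n)P_{A,\tau}(\tau_n,\tau_1)$, so the inequality again reduces to $w_{\tau_n}/w_{\tau_1}\geq P_{A,\tau}(\tau_n,\tau_1)$, one more entry of $W\geq P_{A,\tau}$.

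There is no real obstacle here: the two directions are essentially formal, once one observes the reciprocal symmetry of $W$ and uses the multiplicativity of path products together with the cycle-product relation (\ref{proij}). In fact the argument shows something slightly stronger, namely that only the $n$ cyclic entries $W_{\tau_i,\tau_{i+1}}\geq P_{A,\tau}(\tau_i,\tau_{i+1})$ are needed in the converse direction, while Theorem \ref{Rlow} supplies the full entry-wise comparison in the forward direction; the two bounds in (\ref{intt01}) are thus tight.
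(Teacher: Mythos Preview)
Your proof is correct and follows essentially the same route as the paper's: both reduce the two-sided interval to the single lower bound via the reciprocal symmetry $W=W^{(-T)}$, invoke Theorem \ref{Rlow} for the forward direction, and read off the defining chain (\ref{ineqe}) from the cyclic entries $W_{\tau_i,\tau_{i+1}}\geq P_{A,\tau}(\tau_i,\tau_{i+1})$ for the converse. Your write-up is slightly more explicit about the multiplicative identities used in the converse, but there is no substantive difference.
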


\begin{proof}
Suppose that $w\in\mathcal{E}_{\tau}(A)$. By Theorem \ref{Rlow}, $\frac{w_{i}%
}{w_{j}}\geq P_{A,\tau}(i,j)$ for all $i,j,$ implying the lower bound. The
upper bound follows from the lower bound taking into account that
$W=W^{(-T)}.$

Now suppose that (\ref{intt01}) holds. Then, $\frac{w_{\tau_{i}}}%
{w_{\tau_{i+1}}}\geq P_{A,\tau}(\tau_{i},\tau_{i+1}),$ $i=1,\ldots,n-1,$ and
$\frac{w_{\tau_{n}}}{w_{\tau_{1}}}\geq P_{A,\tau}(\tau_{n},\tau_{1}).$ Thus,
$w\in\mathcal{E}_{\tau}(A).$
\end{proof}

\bigskip Since $P_{A,\tau}\leq W$ is equivalent to $W\leq P_{A,\tau}^{(-T)}$,
condition (\ref{intt01}) is equivalent to $P_{A,\tau}\leq W$. Also, observe
that, because of (\ref{proij}), if $p_{ij}$, $i\neq j$, is the $i,j$ entry of
$P_{A,\tau}$, then the $i,j$ entry of $P_{A,\tau}^{(-T)}$ is $\frac{1}%
{\tau(A)}p_{ij}$.

\begin{corollary}
\label{cpless}Let $A\in\mathcal{PC}_{n}^{0}$ and $\tau,\nu\in\Gamma(A).$ If
$P_{A,\tau}\leq P_{A,\nu}$ then $\mathcal{E}_{\nu}(A)\subseteq\mathcal{E}%
_{\tau}(A).$
\end{corollary}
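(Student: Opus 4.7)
The plan is to deduce the corollary as an immediate consequence of Theorem \ref{tt01}, which characterizes membership in $\mathcal{E}_\tau(A)$ by a sandwich condition on $W$. Since the remark right after Theorem \ref{tt01} notes that the two inequalities in (\ref{intt01}) are equivalent (each being obtained from the other by the operation $X \mapsto X^{(-T)}$, under which $W$ is fixed), the whole content of the characterization reduces to the single lower bound $P_{A,\tau} \leq W$, and this is what makes the corollary essentially immediate.

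Concretely, I would start by fixing an arbitrary $w \in \mathcal{E}_\nu(A)$ and setting $W = ww^{(-T)}$. Applying the ``only if'' direction of Theorem \ref{tt01} to $\nu$, we get $P_{A,\nu} \leq W$. Combining this with the hypothesis $P_{A,\tau} \leq P_{A,\nu}$ (entry-wise) gives $P_{A,\tau} \leq W$. Then the ``if'' direction of Theorem \ref{tt01} applied to $\tau$ yields $w \in \mathcal{E}_\tau(A)$, which is what we wanted. Since $w$ was arbitrary, the containment $\mathcal{E}_\nu(A) \subseteq \mathcal{E}_\tau(A)$ follows.

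There is no real obstacle to this argument; it is a one-line chain of inequalities once Theorem \ref{tt01} and the remark following it are invoked. If one prefers to work with the two-sided form of (\ref{intt01}) rather than appeal to the remark, one can instead note that the map $X \mapsto X^{(-T)}$ is order-reversing on entry-wise positive matrices, so $P_{A,\tau} \leq P_{A,\nu}$ gives $P_{A,\nu}^{(-T)} \leq P_{A,\tau}^{(-T)}$; then $P_{A,\tau} \leq P_{A,\nu} \leq W \leq P_{A,\nu}^{(-T)} \leq P_{A,\tau}^{(-T)}$, again placing $w$ in $\mathcal{E}_\tau(A)$ by Theorem \ref{tt01}.
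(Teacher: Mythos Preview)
Your proposal is correct and follows essentially the same approach as the paper's proof: pick $w\in\mathcal{E}_{\nu}(A)$, apply Theorem \ref{tt01} to get $P_{A,\nu}\leq W$, use the hypothesis to obtain $P_{A,\tau}\leq W$, and invoke Theorem \ref{tt01} again (using that $P_{A,\tau}\leq W$ implies $W\leq P_{A,\tau}^{(-T)}$) to conclude $w\in\mathcal{E}_{\tau}(A)$.
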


\begin{proof}
Let $w\in\mathcal{E}_{\nu}(A).$ By Theorem \ref{tt01}, $P_{A,\nu}\leq W.$
Then, by the hypothesis, $P_{A,\tau}\leq W,$ implying $W\leq P_{A,\tau}%
^{(-T)}.$ By Theorem \ref{tt01}, $w\in\mathcal{E}_{\tau}(A).$
\end{proof}

\bigskip

As an immediate consequence of Theorems \ref{TeEaconvexunion} and \ref{tt01},
we give the main result of this section.

\begin{theorem}
\label{tt012}Let $A\in\mathcal{PC}_{n}^{0},$ $w\in\mathbb{R}_{+}^{n}$ and
$W=ww^{(-T)}$. Then, $w\in\mathcal{E}(A)$ if and only if there is $\tau
\in\Gamma(A)$ such that
\[
P_{A,\tau}\leq W\leq P_{A,\tau}^{(-T)}.
\]

\end{theorem}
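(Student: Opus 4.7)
The plan is to chain together Theorems \ref{TeEaconvexunion} and \ref{tt01} directly, as the author flags. First I would write $w\in\mathcal{E}(A)$ and invoke Theorem \ref{TeEaconvexunion} to rewrite this as the existence of some $\tau\in\Gamma(A)$ with $w\in\mathcal{E}_{\tau}(A)$. Then, for that same $\tau$, Theorem \ref{tt01} translates the cone membership $w\in\mathcal{E}_{\tau}(A)$ into the two-sided matrix-interval inequality $P_{A,\tau}\leq W\leq P_{A,\tau}^{(-T)}$. Composing the two equivalences yields the ``only if'' direction.

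For the converse, I would start from a $\tau\in\Gamma(A)$ satisfying $P_{A,\tau}\leq W\leq P_{A,\tau}^{(-T)}$, apply Theorem \ref{tt01} in the reverse direction to conclude $w\in\mathcal{E}_{\tau}(A)$, and then observe that $\mathcal{E}_{\tau}(A)\subseteq\bigcup_{\sigma\in\Gamma(A)}\mathcal{E}_{\sigma}(A)=\mathcal{E}(A)$ by Theorem \ref{TeEaconvexunion}. There is really no obstacle here: both ingredients are already in place, and the only substantive point is matching the existential quantifier on $\tau$ in the statement with the union over $\Gamma(A)$ in the decomposition of $\mathcal{E}(A)$. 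I would also note, as the preceding remark does, that the upper bound $W\leq P_{A,\tau}^{(-T)}$ is automatic from the lower bound $P_{A,\tau}\leq W$ via $W=W^{(-T)}$, so verifying just one of the two inequalities suffices when applying the criterion in practice.
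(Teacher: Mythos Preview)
Your proposal is correct and matches the paper's own treatment: the paper simply states that Theorem \ref{tt012} is an immediate consequence of Theorems \ref{TeEaconvexunion} and \ref{tt01}, which is exactly the chaining you carry out. Your added remark about the upper bound being automatic from the lower bound via $W=W^{(-T)}$ also mirrors the observation the paper makes right after Theorem \ref{tt01}.
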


Notice that the matrix $L_{A}$ in Theorem \ref{tminmax} is the entry-wise
minimum of the matrices $P_{A,\tau}$, $\tau\in\Gamma(A)$.

\begin{example}
\label{exbound}Let
\[
A=\left[
\begin{array}
[c]{cccc}%
1 & 1 & 3 & 7\\
1 & 1 & 1 & 2\\
\frac{1}{3} & 1 & 1 & 1\\
\frac{1}{7} & \frac{1}{2} & 1 & 1
\end{array}
\right]  .
\]
We have $\Gamma(A)=\{\alpha,\beta,\gamma\}$, with $\alpha,$ $\beta,$ $\gamma$
as in (\ref{abg})$.$ Moreover, $\alpha(A)=\frac{1}{7},$ $\beta(A)=\frac{2}{3}$
and $\gamma(A)=\frac{6}{7}.$ We have (see Example \ref{ex4by4})%
\begin{align*}
P_{A,\alpha}  &  =\left[
\begin{array}
[c]{cccc}%
1 & 1 & 1 & 1\\
\frac{1}{7} & 1 & 1 & 1\\
\frac{1}{7} & \frac{1}{7} & 1 & 1\\
\frac{1}{7} & \frac{1}{7} & \frac{1}{7} & 1
\end{array}
\right]  ,\quad P_{A,\beta}=\left[
\begin{array}
[c]{cccc}%
1 & 1 & 2 & 2\\
\frac{2}{3} & 1 & 2 & 2\\
\frac{1}{3} & \frac{1}{3} & 1 & \frac{2}{3}\\
\frac{1}{3} & \frac{1}{3} & 1 & 1
\end{array}
\right]  ,\quad P_{A,\gamma}=\left[
\begin{array}
[c]{cccc}%
1 & 3 & 3 & 6\\
\frac{2}{7} & 1 & \frac{6}{7} & 2\\
\frac{2}{7} & 1 & 1 & 2\\
\frac{1}{7} & \frac{3}{7} & \frac{3}{7} & 1
\end{array}
\right]  .\\
&
\end{align*}
If $w\in\mathcal{E}(A),$ one of the following holds:
\begin{equation}
\left[
\begin{array}
[c]{cccc}%
1 & 1 & 1 & 1\\
\frac{1}{7} & 1 & 1 & 1\\
\frac{1}{7} & \frac{1}{7} & 1 & 1\\
\frac{1}{7} & \frac{1}{7} & \frac{1}{7} & 1
\end{array}
\right]  \leq W\leq\left[
\begin{array}
[c]{cccc}%
1 & 7 & 7 & 7\\
1 & 1 & 7 & 7\\
1 & 1 & 1 & 7\\
1 & 1 & 1 & 1
\end{array}
\right]  ; \label{b1}%
\end{equation}%
\begin{equation}
\left[
\begin{array}
[c]{cccc}%
1 & 1 & 2 & 2\\
\frac{2}{3} & 1 & 2 & 2\\
\frac{1}{3} & \frac{1}{3} & 1 & \frac{2}{3}\\
\frac{1}{3} & \frac{1}{3} & 1 & 1
\end{array}
\right]  \leq W\leq\left[
\begin{array}
[c]{cccc}%
1 & \frac{3}{2} & 3 & 3\\
1 & 1 & 3 & 3\\
\frac{1}{2} & \frac{1}{2} & 1 & 1\\
\frac{1}{2} & \frac{1}{2} & \frac{3}{2} & 1
\end{array}
\right]  ; \label{b2}%
\end{equation}
or%
\begin{equation}
\left[
\begin{array}
[c]{cccc}%
1 & 3 & 3 & 6\\
\frac{2}{7} & 1 & \frac{6}{7} & 2\\
\frac{2}{7} & 1 & 1 & 2\\
\frac{1}{7} & \frac{3}{7} & \frac{3}{7} & 1
\end{array}
\right]  \leq W\leq\left[
\begin{array}
[c]{cccc}%
1 & \frac{7}{2} & \frac{7}{2} & 7\\
\frac{1}{3} & 1 & 1 & \frac{7}{3}\\
\frac{1}{3} & \frac{7}{6} & 1 & \frac{7}{3}\\
\frac{1}{6} & \frac{1}{2} & \frac{1}{2} & 1
\end{array}
\right]  . \label{b3}%
\end{equation}

\end{example}

\section{Maximal sets of attainable bounds\label{smax}}

For any $A\in\mathcal{PC}_{n}^{0}$, $\tau\in\Gamma(A)$ and $i,j\in N$, there
is a $w\in\mathcal{E}_{\tau}(A)$ such that $\frac{w_{i}}{w_{j}}=P_{A,\tau
}(i,j)$ (Theorem \ref{Rlow})$.$ Here we describe the maximal sets of positions
in which $W=\left[  \frac{w_{i}}{w_{j}}\right]  $ and $P_{A,\tau}$ can coincide.

\begin{definition}
Let $\tau=\tau_{1}\tau_{2}\cdots\tau_{n}\tau_{1},$ with $\tau_{1}=k,$ be an
H-cycle in $N$. For $i=1,\ldots,n-1,$ let $S^{(\tau_{i})}$ be the set of pairs
$(\tau_{i},\tau_{j}),$ with $j=i+1,\ldots,n.$ Let
\[
S_{k}^{(\tau)}=S^{(\tau_{1})}\cup S^{(\tau_{2})}\cup\cdots\cup S^{(\tau
_{n-1})}.
\]

\end{definition}

\begin{remark}
\label{rrow}Let $\tau=\tau_{1}\tau_{2}\cdots\tau_{n}\tau_{1},$ with $\tau
_{1}=k,$ be an H-cycle in $N$. We have that $(p,q)\in S_{k}^{(\tau)}$ if and
only if there are $i,j\in N,$ $i<j,$ such that $p=\tau_{i}$ and $q=\tau_{j}.$
In particular, $(p,q)\in S_{k}^{(\tau)}$ for all $q\in N,$ $q\neq p,$ if and
only if $p=k,$ and $(p,q)\in S_{k}^{(\tau)}$ for all $p\in N,$ $p\neq q,$ if
and only if $q=\tau_{n}.$ Moreover, if $(p,q),$ $(q,r)\in S_{k}^{(\tau)}$ then
$(p,r)\in S_{k}^{(\tau)}$. The set $S_{k}^{(\tau)}$ has $\frac{n^{2}-n}{2}$
pairs, none of which is the reversal of another. In fact, for any $p,q\in N,$
$p\neq q$, exactly one of $(p,q)$ or $(q,p)$ is in $S_{k}^{(\tau)}$.
\end{remark}

\begin{example}
\label{exsets}Let $n=4,$ and $\alpha$, $\beta,$ $\gamma$ be as (\ref{abg}). We
have
\begin{align*}
S_{1}^{(\alpha)}  &  =\{(1,2),(1,3),(1,4),(2,3),(2,4),(3,4)\},\\
S_{2}^{(\alpha)}  &  =\{(2,3),(2,4),(2,1),(3,4),(3,1),(4,1)\},\\
S_{3}^{(\alpha)}  &  =\{(3,4),(3,1),(3,2),(4,1),(4,2),(1,2)\},\\
S_{4}^{(\alpha)}  &  =\{(4,1),(4,2),(4,3),(1,2),(1,3),(2,3)\}.
\end{align*}%
\begin{align*}
S_{1}^{(\beta)}  &  =\{(1,2),(1,4),(1,3),(2,4),(2,3),(4,3)\},\\
S_{2}^{(\beta)}  &  =\{(2,4),(2,3),(2,1),(4,3),(4,1),(3,1)\},\\
S_{3}^{(\beta)}  &  =\{(3,1),(3,2),(3,4),(1,2),(1,4),(2,4)\},\\
S_{4}^{(\beta)}  &  =\{(4,3),(4,1),(4,2),(3,1),(3,2),(1,2)\}.
\end{align*}%
\begin{align*}
S_{1}^{(\gamma)}  &  =\{(1,3),(1,2),(1,4),(3,2),(3,4),(2,4)\},\\
S_{2}^{(\gamma)}  &  =\{(2,4),(2,1),(2,3),(4,1),(4,3),(1,3)\},\\
S_{3}^{(\gamma)}  &  =\{(3,2),(3,4),(3,1),(2,4),(2,1),(4,1)\},\\
S_{4}^{(\gamma)}  &  =\{(4,1),(4,3),(4,2),(1,3),(1,2),(3,2)\}.
\end{align*}
The sets $S_{k}^{(\operatorname*{rev}\alpha)},$ $S_{k}^{(\operatorname*{rev}%
\beta)},$ $S_{k}^{(\operatorname*{rev}\gamma)},$ $k=1,2,3,4,$ are obtained
from the sets $S_{k}^{(\alpha)},$ $S_{k}^{(\beta)},$ $S_{k}^{(\gamma)}$ by
reversing the pairs $(i,j)$ with $i\neq k.$
\end{example}

\bigskip

Each set $S_{k}^{(\tau)}$ uniquely determines $\tau$ and $k.$

\begin{proposition}
\label{rsdist}Let $k_{1},k_{2}\in N$ and $\tau,\nu$ be H-cycles in $N.$ If
$k_{1}\neq k_{2}$ or $\tau\neq\nu,$ then $S_{k_{1}}^{(\tau)}\neq S_{k_{2}%
}^{(\nu)}.$
\end{proposition}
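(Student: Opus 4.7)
The plan is to recover both $k$ and $\tau$ from the set $S_{k}^{(\tau)}$ by interpreting it as a strict total order on $N$. I would define a binary relation $\prec$ on $N$ by $p\prec q$ if and only if $(p,q)\in S_{k}^{(\tau)}$. Remark \ref{rrow} states that exactly one of $(p,q),(q,p)$ lies in $S_{k}^{(\tau)}$ for every distinct $p,q\in N$ (giving antisymmetry and totality) and that $S_{k}^{(\tau)}$ is closed under the rule $(p,q),(q,r)\in S_{k}^{(\tau)}\Rightarrow(p,r)\in S_{k}^{(\tau)}$ (transitivity). Together these say that $\prec$ is a strict total order on the finite set $N$.

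Next, I would read off $k$ and $\tau$ from $\prec$. Remark \ref{rrow} characterizes $p=k$ as the unique element satisfying $(p,q)\in S_{k}^{(\tau)}$ for every $q\neq p$, so $k$ is the $\prec$-minimum of $N$; similarly $\tau_{n}$ is the $\prec$-maximum. More generally, since $(\tau_{i},\tau_{j})\in S_{k}^{(\tau)}$ exactly when $i<j$, listing $N$ in $\prec$-increasing order produces precisely $\tau_{1}\tau_{2}\cdots\tau_{n}$, which determines the H-cycle $\tau=\tau_{1}\tau_{2}\cdots\tau_{n}\tau_{1}$.

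Now assume $S_{k_{1}}^{(\tau)}=S_{k_{2}}^{(\nu)}$. The two sets induce the same strict total order on $N$, so they have the same minimum, forcing $k_{1}=k_{2}$, and the same $\prec$-increasing listing, forcing $\tau_{1}\cdots\tau_{n}=\nu_{1}\cdots\nu_{n}$ and hence $\tau=\nu$. The contrapositive is the desired statement. The argument is essentially combinatorial bookkeeping via Remark \ref{rrow}, so I do not expect a genuinely hard step; the only care required is to invoke the three bullet points of the remark in the correct sequence (antisymmetry/totality, transitivity, and the characterization of the extremal indices).
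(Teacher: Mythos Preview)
Your proof is correct and takes essentially the same approach as the paper: both recover $k$ and $\tau$ from $S_{k}^{(\tau)}$ via Remark \ref{rrow}. The paper argues the two cases ($k_{1}\neq k_{2}$; same $k$ but $\tau\neq\nu$) separately by exhibiting a distinguishing pair, while you package the same facts as ``$S_{k}^{(\tau)}$ encodes a strict total order whose increasing listing is $\tau_{1}\cdots\tau_{n}$,'' which is a clean contrapositive of the same argument.
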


\begin{proof}
The fact that $k_{1}\neq k_{2}$ implies $S_{k_{1}}^{(\tau)}\neq S_{k_{2}%
}^{(\nu)}$ follows from Remark \ref{rrow}. If $k\in N,$ $\tau=\tau_{1}\tau
_{2}\cdots\tau_{n}\tau_{1}$ and $\nu=\nu_{1}\nu_{2}\cdots\nu_{n}\nu_{1},$ with
$\tau_{1}=\nu_{1}=k,$ and $\tau\neq\nu,$ then there are $i,j\in N$ such that
$i$ precedes $j$ in $\tau_{1}\tau_{2}\cdots\tau_{n},$ and $j$ precedes $i$ in
$\nu_{1}\nu_{2}\cdots\nu_{n}.$ Then $(i,j)\in S_{k}^{(\tau)}$ but $(i,j)\notin
S_{k}^{(\nu)}$ (in fact $(j,i)\in S_{k}^{(\nu)}$).
\end{proof}

\bigskip

Each set $S_{k}^{(\tau)}$ determines a set of entries in the matrix
$P_{A,\tau}$. It contains the off-diagonal entries in the $k$th row and in the
$\tau_{n}$th column of $P_{A,\tau}$ (assuming $\tau_{1}=k$)$.$ Also note that
the set of entries determined by $S_{k}^{(\tau)}$ is the transpose of the one
determined by $S_{\tau_{n}}^{(\operatorname*{rev}\tau)}$. We include examples
when $n=4$ that will be helpful later.

\begin{example}
\label{exSa}Let $A$ be the matrix in Example \ref{ex4by4} and $\alpha
,\beta,\gamma$ and $\alpha^{\prime},\beta^{\prime},\gamma^{\prime}$ be the
H-cycles given in (\ref{abg}) and (\ref{abg1})$.$ The entries in $P_{A,\alpha
}$ indexed by the sets $S_{k}^{(\alpha)}$ (Example \ref{exsets}) are
highlighted next:
\begin{align*}
S_{1}^{(\alpha)}  &  :\left[
\begin{array}
[c]{cccc}%
1 &
\begin{tabular}
[c]{|c|}\hline
$1$\\\hline
\end{tabular}
&
\begin{tabular}
[c]{|c|}\hline
$1$\\\hline
\end{tabular}
& \smallskip%
\begin{tabular}
[c]{|c|}\hline
$1$\\\hline
\end{tabular}
\\
\frac{1}{a_{14}} & 1 &
\begin{tabular}
[c]{|c|}\hline
$1$\\\hline
\end{tabular}
& \smallskip%
\begin{tabular}
[c]{|c|}\hline
$1$\\\hline
\end{tabular}
\\
\frac{1}{a_{14}} & \frac{1}{a_{14}} & 1 & \smallskip%
\begin{tabular}
[c]{|c|}\hline
$1$\\\hline
\end{tabular}
\\
\frac{1}{a_{14}} & \frac{1}{a_{14}} & \frac{1}{a_{14}} & 1
\end{array}
\right]  ,\quad S_{2}^{(\alpha)}:\left[
\begin{array}
[c]{cccc}%
\smallskip1 & 1 & 1 & 1\\
\smallskip%
\begin{tabular}
[c]{|c|}\hline
$\frac{1}{a_{14}}$\\\hline
\end{tabular}
& 1 &
\begin{tabular}
[c]{|c|}\hline
$1$\\\hline
\end{tabular}
&
\begin{tabular}
[c]{|c|}\hline
$1$\\\hline
\end{tabular}
\\
\smallskip%
\begin{tabular}
[c]{|c|}\hline
$\frac{1}{a_{14}}$\\\hline
\end{tabular}
& \frac{1}{a_{14}} & 1 &
\begin{tabular}
[c]{|c|}\hline
$1$\\\hline
\end{tabular}
\\
\smallskip%
\begin{tabular}
[c]{|c|}\hline
$\frac{1}{a_{14}}$\\\hline
\end{tabular}
& \frac{1}{a_{14}} & \frac{1}{a_{14}} & 1
\end{array}
\right]  \bigskip\\
S_{3}^{(\alpha)}  &  :\left[
\begin{array}
[c]{cccc}%
1 & \smallskip%
\begin{tabular}
[c]{|c|}\hline
$1$\\\hline
\end{tabular}
& 1 & 1\\
\frac{1}{a_{14}} & \smallskip1 & 1 & 1\\%
\begin{tabular}
[c]{|c|}\hline
$\frac{1}{a_{14}}$\\\hline
\end{tabular}
& \smallskip%
\begin{tabular}
[c]{|c|}\hline
$\frac{1}{a_{14}}$\\\hline
\end{tabular}
& 1 &
\begin{tabular}
[c]{|c|}\hline
$1$\\\hline
\end{tabular}
\\%
\begin{tabular}
[c]{|c|}\hline
$\frac{1}{a_{14}}$\\\hline
\end{tabular}
& \smallskip%
\begin{tabular}
[c]{|c|}\hline
$\frac{1}{a_{14}}$\\\hline
\end{tabular}
& \frac{1}{a_{14}} & 1
\end{array}
\right]  ,\quad S_{4}^{(\alpha)}:\left[
\begin{array}
[c]{cccc}%
1 &
\begin{tabular}
[c]{|c|}\hline
$1$\\\hline
\end{tabular}
& \smallskip%
\begin{tabular}
[c]{|c|}\hline
$1$\\\hline
\end{tabular}
& 1\\
\frac{1}{a_{14}} & 1 & \smallskip%
\begin{tabular}
[c]{|c|}\hline
$1$\\\hline
\end{tabular}
& 1\\
\frac{1}{a_{14}} & \frac{1}{a_{14}} & \smallskip1 & 1\\%
\begin{tabular}
[c]{|c|}\hline
$\frac{1}{a_{14}}$\\\hline
\end{tabular}
&
\begin{tabular}
[c]{|c|}\hline
$\frac{1}{a_{14}}$\\\hline
\end{tabular}
& \smallskip%
\begin{tabular}
[c]{|c|}\hline
$\frac{1}{a_{14}}$\\\hline
\end{tabular}
& 1
\end{array}
\right]  .
\end{align*}
The case of $\alpha^{\prime}$ will not be used. The entries in $P_{A,\beta}$
indexed by the sets $S_{k}^{(\beta)}$ are highlighted next:
\begin{align*}
S_{1}^{(\beta)}  &  :\left[
\begin{array}
[c]{cccc}%
1 &
\begin{tabular}
[c]{|c|}\hline
$1$\\\hline
\end{tabular}
& \smallskip%
\begin{tabular}
[c]{|c|}\hline
$a_{24}$\\\hline
\end{tabular}
&
\begin{tabular}
[c]{|c|}\hline
$a_{24}$\\\hline
\end{tabular}
\\
\beta(A) & 1 & \smallskip%
\begin{tabular}
[c]{|c|}\hline
$a_{24}$\\\hline
\end{tabular}
&
\begin{tabular}
[c]{|c|}\hline
$a_{24}$\\\hline
\end{tabular}
\\
\frac{1}{a_{13}} & \frac{1}{a_{13}} & \smallskip1 & \beta(A)\\
\frac{1}{a_{13}} & \frac{1}{a_{13}} & \smallskip%
\begin{tabular}
[c]{|c|}\hline
$1$\\\hline
\end{tabular}
& 1
\end{array}
\right]  ,\quad S_{2}^{(\beta)}:\left[
\begin{array}
[c]{cccc}%
\smallskip1 & 1 & a_{24} & a_{24}\\
\smallskip%
\begin{tabular}
[c]{|c|}\hline
$\beta(A)$\\\hline
\end{tabular}
& 1 &
\begin{tabular}
[c]{|c|}\hline
$a_{24}$\\\hline
\end{tabular}
&
\begin{tabular}
[c]{|c|}\hline
$a_{24}$\\\hline
\end{tabular}
\\
\smallskip%
\begin{tabular}
[c]{|c|}\hline
$\frac{1}{a_{13}}$\\\hline
\end{tabular}
& \frac{1}{a_{13}} & 1 & \beta(A)\\
\smallskip%
\begin{tabular}
[c]{|c|}\hline
$\frac{1}{a_{13}}$\\\hline
\end{tabular}
& \frac{1}{a_{13}} &
\begin{tabular}
[c]{|c|}\hline
$1$\\\hline
\end{tabular}
& 1
\end{array}
\right]  \bigskip\\
S_{3}^{(\beta)}  &  :\left[
\begin{array}
[c]{cccc}%
1 & \smallskip%
\begin{tabular}
[c]{|c|}\hline
$1$\\\hline
\end{tabular}
& a_{24} &
\begin{tabular}
[c]{|c|}\hline
$a_{24}$\\\hline
\end{tabular}
\\
\beta(A) & \smallskip1 & a_{24} &
\begin{tabular}
[c]{|c|}\hline
$a_{24}$\\\hline
\end{tabular}
\\%
\begin{tabular}
[c]{|c|}\hline
$\frac{1}{a_{13}}$\\\hline
\end{tabular}
& \smallskip%
\begin{tabular}
[c]{|c|}\hline
$\frac{1}{a_{13}}$\\\hline
\end{tabular}
& 1 &
\begin{tabular}
[c]{|c|}\hline
$\beta(A)$\\\hline
\end{tabular}
\\
\frac{1}{a_{13}} & \smallskip\frac{1}{a_{13}} & 1 & 1
\end{array}
\right]  ,\quad S_{4}^{(\beta)}:\left[
\begin{array}
[c]{cccc}%
1 & \smallskip%
\begin{tabular}
[c]{|c|}\hline
$1$\\\hline
\end{tabular}
& a_{24} & a_{24}\\
\beta(A) & \smallskip1 & a_{24} & a_{24}\\%
\begin{tabular}
[c]{|c|}\hline
$\frac{1}{a_{13}}$\\\hline
\end{tabular}
& \smallskip%
\begin{tabular}
[c]{|c|}\hline
$\frac{1}{a_{13}}$\\\hline
\end{tabular}
& 1 & \beta(A)\\%
\begin{tabular}
[c]{|c|}\hline
$\frac{1}{a_{13}}$\\\hline
\end{tabular}
& \smallskip%
\begin{tabular}
[c]{|c|}\hline
$\frac{1}{a_{13}}$\\\hline
\end{tabular}
&
\begin{tabular}
[c]{|c|}\hline
$1$\\\hline
\end{tabular}
& 1
\end{array}
\right]  .
\end{align*}
The entries in $P_{A,\beta^{\prime}}$ indexed by the sets $S_{k}%
^{(\beta^{\prime})}$ are highlighted next:
\begin{align*}
S_{1}^{(\beta^{\prime})}  &  :\left[
\begin{array}
[c]{cccc}%
1 & \smallskip%
\begin{tabular}
[c]{|c|}\hline
$\beta^{\prime}(A)$\\\hline
\end{tabular}
&
\begin{tabular}
[c]{|c|}\hline
$a_{13}$\\\hline
\end{tabular}
&
\begin{tabular}
[c]{|c|}\hline
$a_{13}$\\\hline
\end{tabular}
\\
1 & \smallskip1 & a_{13} & a_{13}\\
\frac{1}{a_{24}} & \smallskip%
\begin{tabular}
[c]{|c|}\hline
$\frac{1}{a_{24}}$\\\hline
\end{tabular}
& 1 &
\begin{tabular}
[c]{|c|}\hline
$1$\\\hline
\end{tabular}
\\
\frac{1}{a_{24}} & \smallskip%
\begin{tabular}
[c]{|c|}\hline
$\frac{1}{a_{24}}$\\\hline
\end{tabular}
& \beta^{\prime}(A) & 1
\end{array}
\right]  ,\quad S_{2}^{(\beta^{\prime})}:\left[
\begin{array}
[c]{cccc}%
1 & \beta^{\prime}(A) &
\begin{tabular}
[c]{|c|}\hline
$a_{13}$\\\hline
\end{tabular}
& \smallskip%
\begin{tabular}
[c]{|c|}\hline
$a_{13}$\\\hline
\end{tabular}
\\%
\begin{tabular}
[c]{|c|}\hline
$1$\\\hline
\end{tabular}
& 1 &
\begin{tabular}
[c]{|c|}\hline
$a_{13}$\\\hline
\end{tabular}
& \smallskip%
\begin{tabular}
[c]{|c|}\hline
$a_{13}$\\\hline
\end{tabular}
\\
\frac{1}{a_{24}} & \frac{1}{a_{24}} & 1 & \smallskip%
\begin{tabular}
[c]{|c|}\hline
$1$\\\hline
\end{tabular}
\\
\frac{1}{a_{24}} & \frac{1}{a_{24}} & \beta^{\prime}(A) & \smallskip1
\end{array}
\right]  \bigskip\\
S_{3}^{(\beta^{\prime})}  &  :\left[
\begin{array}
[c]{cccc}%
\smallskip1 & \beta^{\prime}(A) & a_{13} & a_{13}\\
\smallskip%
\begin{tabular}
[c]{|c|}\hline
$1$\\\hline
\end{tabular}
& 1 & a_{13} & a_{13}\\
\smallskip%
\begin{tabular}
[c]{|c|}\hline
$\frac{1}{a_{24}}$\\\hline
\end{tabular}
&
\begin{tabular}
[c]{|c|}\hline
$\frac{1}{a_{24}}$\\\hline
\end{tabular}
& 1 &
\begin{tabular}
[c]{|c|}\hline
$1$\\\hline
\end{tabular}
\\
\smallskip%
\begin{tabular}
[c]{|c|}\hline
$\frac{1}{a_{24}}$\\\hline
\end{tabular}
&
\begin{tabular}
[c]{|c|}\hline
$\frac{1}{a_{24}}$\\\hline
\end{tabular}
& \beta^{\prime}(A) & 1
\end{array}
\right]  ,\quad S_{4}^{(\beta^{\prime})}:\left[
\begin{array}
[c]{cccc}%
1 & \beta^{\prime}(A) & \smallskip%
\begin{tabular}
[c]{|c|}\hline
$a_{13}$\\\hline
\end{tabular}
& a_{13}\\%
\begin{tabular}
[c]{|c|}\hline
$1$\\\hline
\end{tabular}
& 1 & \smallskip%
\begin{tabular}
[c]{|c|}\hline
$a_{13}$\\\hline
\end{tabular}
& a_{13}\\
\frac{1}{a_{24}} & \frac{1}{a_{24}} & \smallskip1 & 1\\%
\begin{tabular}
[c]{|c|}\hline
$\frac{1}{a_{24}}$\\\hline
\end{tabular}
&
\begin{tabular}
[c]{|c|}\hline
$\frac{1}{a_{24}}$\\\hline
\end{tabular}
& \smallskip%
\begin{tabular}
[c]{|c|}\hline
$\beta^{\prime}(A)$\\\hline
\end{tabular}
& 1
\end{array}
\right]  .
\end{align*}
The entries in $P_{A,\gamma}$ indexed by the sets $S_{k}^{(\gamma)}$ are
highlighted next:
\begin{align*}
S_{1}^{(\gamma)}  &  :\left[
\begin{array}
[c]{cccc}%
1 &
\begin{tabular}
[c]{|c|}\hline
$a_{13}$\\\hline
\end{tabular}
&
\begin{tabular}
[c]{|c|}\hline
$a_{13}$\\\hline
\end{tabular}
& \smallskip%
\begin{tabular}
[c]{|c|}\hline
$a_{13}a_{24}$\\\hline
\end{tabular}
\\
\frac{a_{24}}{a_{14}} & 1 & \gamma(A) & \smallskip%
\begin{tabular}
[c]{|c|}\hline
$a_{24}$\\\hline
\end{tabular}
\\
\frac{a_{24}}{a_{14}} &
\begin{tabular}
[c]{|c|}\hline
$1$\\\hline
\end{tabular}
& 1 & \smallskip%
\begin{tabular}
[c]{|c|}\hline
$a_{24}$\\\hline
\end{tabular}
\\
\frac{1}{a_{14}} & \frac{a_{13}}{a_{14}} & \frac{a_{13}}{a_{14}} & \smallskip1
\end{array}
\right]  ,\quad S_{2}^{(\gamma)}:\left[
\begin{array}
[c]{cccc}%
1 & a_{13} & \smallskip%
\begin{tabular}
[c]{|c|}\hline
$a_{13}$\\\hline
\end{tabular}
& a_{13}a_{24}\\%
\begin{tabular}
[c]{|c|}\hline
$\frac{a_{24}}{a_{14}}$\\\hline
\end{tabular}
& 1 & \smallskip%
\begin{tabular}
[c]{|c|}\hline
$\gamma(A)$\\\hline
\end{tabular}
&
\begin{tabular}
[c]{|c|}\hline
$a_{24}$\\\hline
\end{tabular}
\\
\frac{a_{24}}{a_{14}} & 1 & \smallskip1 & a_{24}\\%
\begin{tabular}
[c]{|c|}\hline
$\frac{1}{a_{14}}$\\\hline
\end{tabular}
& \frac{a_{13}}{a_{14}} & \smallskip%
\begin{tabular}
[c]{|c|}\hline
$\frac{a_{13}}{a_{14}}$\\\hline
\end{tabular}
& 1
\end{array}
\right]  \bigskip\\
S_{3}^{(\gamma)}  &  :\left[
\begin{array}
[c]{cccc}%
\smallskip1 & a_{13} & a_{13} & a_{13}a_{24}\\
\smallskip%
\begin{tabular}
[c]{|c|}\hline
$\frac{a_{24}}{a_{14}}$\\\hline
\end{tabular}
& 1 & \gamma(A) &
\begin{tabular}
[c]{|c|}\hline
$a_{24}$\\\hline
\end{tabular}
\\
\smallskip%
\begin{tabular}
[c]{|c|}\hline
$\frac{a_{24}}{a_{14}}$\\\hline
\end{tabular}
&
\begin{tabular}
[c]{|c|}\hline
$1$\\\hline
\end{tabular}
& 1 &
\begin{tabular}
[c]{|c|}\hline
$a_{24}$\\\hline
\end{tabular}
\\
\smallskip%
\begin{tabular}
[c]{|c|}\hline
$\frac{1}{a_{14}}$\\\hline
\end{tabular}
& \frac{a_{13}}{a_{14}} & \frac{a_{13}}{a_{14}} & 1
\end{array}
\right]  ,\quad S_{4}^{(\gamma)}:\left[
\begin{array}
[c]{cccc}%
1 & \smallskip%
\begin{tabular}
[c]{|c|}\hline
$a_{13}$\\\hline
\end{tabular}
&
\begin{tabular}
[c]{|c|}\hline
$a_{13}$\\\hline
\end{tabular}
& a_{13}a_{24}\\
\frac{a_{24}}{a_{14}} & \smallskip1 & \gamma(A) & a_{24}\\
\frac{a_{24}}{a_{14}} & \smallskip%
\begin{tabular}
[c]{|c|}\hline
$1$\\\hline
\end{tabular}
& 1 & a_{24}\\%
\begin{tabular}
[c]{|c|}\hline
$\frac{1}{a_{14}}$\\\hline
\end{tabular}
& \smallskip%
\begin{tabular}
[c]{|c|}\hline
$\frac{a_{13}}{a_{14}}$\\\hline
\end{tabular}
&
\begin{tabular}
[c]{|c|}\hline
$\frac{a_{13}}{a_{14}}$\\\hline
\end{tabular}
& 1
\end{array}
\right]  .
\end{align*}
The entries in $P_{A,\gamma^{\prime}}$ indexed by the sets $S_{k}%
^{(\gamma^{\prime})}$ are highlighted next:
\begin{align*}
S_{1}^{(\gamma^{\prime})}  &  :\left[
\begin{array}
[c]{cccc}%
1 &
\begin{tabular}
[c]{|c|}\hline
$\frac{a_{14}}{a_{24}}$\\\hline
\end{tabular}
& \smallskip%
\begin{tabular}
[c]{|c|}\hline
$\frac{a_{14}}{a_{24}}$\\\hline
\end{tabular}
&
\begin{tabular}
[c]{|c|}\hline
$a_{14}$\\\hline
\end{tabular}
\\
\frac{1}{a_{13}} & 1 & \smallskip%
\begin{tabular}
[c]{|c|}\hline
$1$\\\hline
\end{tabular}
& \frac{a_{14}}{a_{13}}\\
\frac{1}{a_{13}} & \gamma^{\prime}(A) & \smallskip1 & \frac{a_{14}}{a_{13}}\\
\frac{1}{a_{13}a_{24}} &
\begin{tabular}
[c]{|c|}\hline
$\frac{1}{a_{24}}$\\\hline
\end{tabular}
& \smallskip%
\begin{tabular}
[c]{|c|}\hline
$\frac{1}{a_{24}}$\\\hline
\end{tabular}
& 1
\end{array}
\right]  ,\quad S_{2}^{(\gamma^{\prime})}:\left[
\begin{array}
[c]{cccc}%
1 & \frac{a_{14}}{a_{24}} & \frac{a_{14}}{a_{24}} & \smallskip%
\begin{tabular}
[c]{|c|}\hline
$a_{14}$\\\hline
\end{tabular}
\\%
\begin{tabular}
[c]{|c|}\hline
$\frac{1}{a_{13}}$\\\hline
\end{tabular}
& 1 &
\begin{tabular}
[c]{|c|}\hline
$1$\\\hline
\end{tabular}
& \smallskip%
\begin{tabular}
[c]{|c|}\hline
$\frac{a_{14}}{a_{13}}$\\\hline
\end{tabular}
\\%
\begin{tabular}
[c]{|c|}\hline
$\frac{1}{a_{13}}$\\\hline
\end{tabular}
& \gamma^{\prime}(A) & 1 & \smallskip%
\begin{tabular}
[c]{|c|}\hline
$\frac{a_{14}}{a_{13}}$\\\hline
\end{tabular}
\\
\frac{1}{a_{13}a_{24}} & \frac{1}{a_{24}} & \frac{1}{a_{24}} & \smallskip1
\end{array}
\right]  \bigskip\\
S_{3}^{(\gamma^{\prime})}  &  :\left[
\begin{array}
[c]{cccc}%
1 & \smallskip%
\begin{tabular}
[c]{|c|}\hline
$\frac{a_{14}}{a_{24}}$\\\hline
\end{tabular}
& \frac{a_{14}}{a_{24}} &
\begin{tabular}
[c]{|c|}\hline
$a_{14}$\\\hline
\end{tabular}
\\
\frac{1}{a_{13}} & \smallskip1 & 1 & \frac{a_{14}}{a_{13}}\\%
\begin{tabular}
[c]{|c|}\hline
$\frac{1}{a_{13}}$\\\hline
\end{tabular}
& \smallskip%
\begin{tabular}
[c]{|c|}\hline
$\gamma^{\prime}(A)$\\\hline
\end{tabular}
& 1 &
\begin{tabular}
[c]{|c|}\hline
$\frac{a_{14}}{a_{13}}$\\\hline
\end{tabular}
\\
\frac{1}{a_{13}a_{24}} & \smallskip%
\begin{tabular}
[c]{|c|}\hline
$\frac{1}{a_{24}}$\\\hline
\end{tabular}
& \frac{1}{a_{24}} & 1
\end{array}
\right]  ,\quad S_{4}^{(\gamma^{\prime})}:\left[
\begin{array}
[c]{cccc}%
\smallskip1 & \frac{a_{14}}{a_{24}} & \frac{a_{14}}{a_{24}} & a_{14}\\
\smallskip%
\begin{tabular}
[c]{|c|}\hline
$\frac{1}{a_{13}}$\\\hline
\end{tabular}
& 1 &
\begin{tabular}
[c]{|c|}\hline
$1$\\\hline
\end{tabular}
& \frac{a_{14}}{a_{13}}\\
\smallskip%
\begin{tabular}
[c]{|c|}\hline
$\frac{1}{a_{13}}$\\\hline
\end{tabular}
& \gamma^{\prime}(A) & 1 & \frac{a_{14}}{a_{13}}\\
\smallskip%
\begin{tabular}
[c]{|c|}\hline
$\frac{1}{a_{13}a_{24}}$\\\hline
\end{tabular}
&
\begin{tabular}
[c]{|c|}\hline
$\frac{1}{a_{24}}$\\\hline
\end{tabular}
&
\begin{tabular}
[c]{|c|}\hline
$\frac{1}{a_{24}}$\\\hline
\end{tabular}
& 1
\end{array}
\right]  .
\end{align*}

\end{example}

\begin{remark}
\label{remcompl}Let $A,B\in\mathcal{PC}_{n}^{0}$ and $\tau\in\Gamma(A)$. Let
$k_{1},k_{2}\in N,$ $k_{1}\neq k_{2}.$ Because of (\ref{proij}) and the fact
that, for $i\neq j,$ either $(i,j)$ or $(j,i)$ is in $S_{k_{1}}^{(\tau)},$ the
matrix $P_{A,\tau}$ is uniquely determined given $\tau(A)$ and $P_{A,\tau
}[S_{k_{1}}^{(\tau)}].$ Also, $P_{A,\tau}$ is uniquely determined by
$P_{A,\tau}[S_{k_{1}}^{(\tau)}\cup S_{k_{2}}^{(\tau)}],$ as this set of
entries of $P_{A,\tau}$ contains a pair of symmetrically located entries and,
thus, determines $\tau(A).$
\end{remark}

\bigskip

We now give the main result of this section. It shows that the sets
$S_{k}^{(\tau)},$ $k=1,\ldots,n,$ of $\frac{n^{2}-n}{2}$ positions are the
maximal sets in which $P_{A,\tau}$ and $W=\left[  \frac{w_{i}}{w_{j}}\right]
$ can coincide, for an efficient vector $w\in\mathcal{E}_{\tau}(A).$

\bigskip

\begin{theorem}
\label{le2}If $A\in\mathcal{PC}_{n}^{0}$ and $\tau\in\Gamma(A)$, then the
following holds:

\begin{enumerate}
\item for each $k\in N$, there is a $w\in\mathcal{E}_{\tau}(A)$ such that
$P_{A,\tau}[S_{k}^{(\tau)}]=W[S_{k}^{(\tau)}].$ The vector $w$ is the solution
of the inequalities in (\ref{ineqe}), with all of them replaced by equalities,
except the one relating $k$ and the index adjacently preceding it in $\tau$
($w$ is an extreme of $\mathcal{E}_{\tau}(A)$);

\item if $w\in\mathcal{E}_{\tau}(A)$ and $P_{A,\tau}[S]=W[S],$ for some
$\emptyset\neq S\subseteq\mathcal{N}$, then there is a $k\in N$ such that
$S\subseteq S_{k}^{(\tau)}.$
\end{enumerate}
\end{theorem}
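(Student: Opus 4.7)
The plan is to reformulate (\ref{ineqe}) as a cyclic chain of $n$ inequalities on consecutive edges of $\tau$, and then track precisely which edges are ``tight'' for each candidate $w$. Writing $\tau = \tau_1 \tau_2 \cdots \tau_n \tau_1$, the inequalities in (\ref{ineqe}) are equivalent to
\[
\frac{w_{\tau_i}}{w_{\tau_{i+1}}} \geq a_{\tau_i, \tau_{i+1}}, \quad i = 1, \ldots, n,
\]
where indices are read modulo $n$ so that $\tau_{n+1} = \tau_1$. The product of the left-hand sides telescopes to $1$, while the product of the right-hand sides is $\tau(A) < 1$, so at least one of these $n$ inequalities must be strict.

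The key observation I would exploit is a telescoping one: for any $p = \tau_r$ and $q = \tau_s$ with $r \neq s$, both $\frac{w_p}{w_q}$ and $P_{A,\tau}(p,q)$ factor as products over the \emph{same} forward $\tau$-path from index $r$ to index $s$ (cyclically), one factor per edge. Because each edge factor on the left dominates its counterpart on the right (by Theorem \ref{Rlow} applied along the edges), the equality $\frac{w_p}{w_q} = P_{A,\tau}(p,q)$ holds if and only if every edge along this forward path is itself an equality in the reformulated chain above.

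For part 1, given $k \in N$, I would list $\tau$ so that $\tau_1 = k$ and define $w$ by imposing equality in the $n-1$ reformulated inequalities corresponding to the edges $\tau_1 \to \tau_2, \ldots, \tau_{n-1} \to \tau_n$; this forces the remaining edge $\tau_n \to k$ to be strict, since otherwise $\tau(A)$ would equal $1$. The resulting $w$ is, by construction, the extreme of $\mathcal{E}_\tau(A)$ described in the statement. For any pair $(\tau_i, \tau_j)$ with $1 \leq i < j \leq n$---which by Remark \ref{rrow} is exactly a description of $S_k^{(\tau)}$---the forward path uses only equality edges, so the telescoping observation yields $\frac{w_{\tau_i}}{w_{\tau_j}} = P_{A,\tau}(\tau_i, \tau_j)$, as required.

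For part 2, given any $w \in \mathcal{E}_\tau(A)$ and a nonempty $S$ on which $W$ and $P_{A,\tau}$ agree, I would pick any index $k$ for which $\frac{w_{\tau_k}}{w_{\tau_{k+1}}} > a_{\tau_k, \tau_{k+1}}$ (such a $k$ exists by the first paragraph). The telescoping observation then forces every pair $(p,q) \in S$ to have a forward $\tau$-path that avoids the strict edge $\tau_k \to \tau_{k+1}$; equivalently, reading $\tau$ starting from $\tau_{k+1}$, the index $p$ precedes $q$, so $(p,q) \in S_{\tau_{k+1}}^{(\tau)}$ by Remark \ref{rrow}. No step here is computationally difficult---the one subtle point is matching the ``forward path avoids the strict edge'' condition with the cyclic-order definition of $S_k^{(\tau)}$, and this correspondence is exactly what Remark \ref{rrow} records.
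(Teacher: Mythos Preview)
Your proposal is correct and follows essentially the same approach as the paper: both arguments hinge on writing $\tau$ starting at $k$, choosing $w$ to make all but one of the chain inequalities equalities, and reading off which pairs $(p,q)$ have $\frac{w_p}{w_q}=P_{A,\tau}(p,q)$ from the position of the non-equality edge. Your telescoping formulation of part~2 actually spells out the step the paper leaves implicit (why any $S_k^{(\tau)}$ whose $n-1$ equalities contain those of $w$ must contain $S$), so your write-up is, if anything, more complete on that point.
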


\begin{proof}
Let $\tau=\tau_{1}\tau_{2}\cdots\tau_{n}\tau_{1},$ with $\tau_{1}=k.$ Let $w$
be such that%
\[
w_{\tau_{1}}=P_{A,\tau}(\tau_{1},\tau_{2})w_{\tau_{2}}=P_{A,\tau}(\tau
_{1},\tau_{3})w_{\tau_{3}}=\cdots=P_{A,\tau}(\tau_{1},\tau_{n})w_{\tau_{n}}.
\]
Clearly, $w\in\mathcal{E}_{\tau}(A)$.\ Let $(p,q)\in S_{k}^{(\tau)}$. Then
$q\neq k$. If $p=k$, then $\frac{w_{p}}{w_{q}}=\frac{w_{k}}{w_{q}}=P_{A,\tau
}(k,q).$ If $p\neq k,$ then $P_{A,\tau}(k,q)=P_{A,\tau}(k,p)P_{A,\tau}(p,q)$
and we have
\[
\frac{w_{p}}{w_{q}}=\frac{\frac{w_{k}}{P_{A,\tau}(k,p)}}{\frac{w_{k}%
}{P_{A,\tau}(k,q)}}=\frac{P_{A,\tau}(k,q)}{P_{A,\tau}(k,p)}=P_{A,\tau}(p,q).
\]
This proves the first claim.

Suppose that $w\in\mathcal{E}_{\tau}(A)$ and $P_{A,\tau}[S]=W[S]$. Vector $w$
is a solution of the inequalities in (\ref{ineqe}), with at least one, and at
most $n-1,$ of them replaced by equalities. Any set $S_{k}^{(\tau)}$
corresponding to $n-1$ equalities in (\ref{ineqe}) that includes the
aforementioned equalities verifies the second claim.
\end{proof}

\bigskip

For $S\subseteq\mathcal{N}$, we have $P_{A,\tau}[S]=W[S]$ if and only if
$P_{A,\tau}^{(-T)}[S^{\prime}]=W[S^{\prime}]$, in which the set $S^{\prime}$
is the transpose of $S$. Thus, the maximal sets of entries in which $W$
coincides with $P_{A,\tau}^{(-T)}$ are the transposes of the sets
$S_{k}^{(\tau)},$ $k\in N$, that is, the sets $S_{k}^{(\operatorname*{rev}%
\tau)},$ $k\in N.$ So, we focus on the sets of entries that can attain the
lower bound in (\ref{intt01}).

\bigskip

We finally show that, if $P_{A,\tau}\ $and $P_{B,\nu}$ coincide in the
positions indexed by $S_{k}^{(\tau)}$, then $\tau=\nu$ (though not necessarily
does $\tau(A)=\nu(B)$).

\begin{remark}
\label{Rperm}Suppose that the matrices $W=ww^{(-T)}$ and $P_{A,\tau}$,
$\tau\in\Gamma(A),$ coincide in the positions indexed by $S_{k}^{(\tau)}$. If
$Q$ is a permutation matrix, then $(Q^{T}w)(Q^{T}w)^{(-T)}=Q^{T}WQ$ and
$Q^{T}P_{A,\tau}Q$ coincide in the corresponding positions. By Theorem
\ref{le2}, this means that this set of positions is an $S_{k^{\prime}}%
^{(\rho)},$ where $\rho$ is the H-cycle given in Lemma \ref{LpermutLA}. If $D$
is a diagonal matrix, then $(D^{-1}w)(D^{-1}w)^{(-T)}=D^{-1}WD$ and
$D^{-1}P_{A,\tau}D$ coincide in the positions indexed by $S_{k}^{(\tau)}$.
\end{remark}

\begin{theorem}
\label{thsamecycle}Let $A,B\in\mathcal{PC}_{n}^{0}$, $\tau\in\Gamma(A)$,
$\nu\in\Gamma(B)\ $and $k\in N.$ If $P_{A,\tau}[S_{k}^{(\tau)}]=P_{B,\nu
}[S_{k}^{(\tau)}]$, then $\tau=\nu.$
\end{theorem}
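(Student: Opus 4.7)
The plan is to reduce $(A,\tau)$ to a canonical form using Lemma \ref{LpermutLA}, and then extract a contradiction from the assumption $\nu\neq\tau$, in the spirit of the proof of Theorem \ref{thequalpath} but with extra care since the two path matrices are only assumed to agree on $S_{k}^{(\tau)}$, not everywhere. First, by a simultaneous monomial similarity applied to both $A$ and $B$ (a permutation followed by a positive diagonal scaling), I may assume $\tau=12\cdots n1$ and $a_{i,i+1}=1$ for $i=1,\ldots,n-1$, so that $a_{n,1}=\tau(A)$. Since the similarity is applied simultaneously, equality of the transformed cycles is equivalent to $\tau=\nu$ for the originals, and the cycle product of $\nu$ on the transformed $B$ is still $\nu(B)<1$.

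In this normalization, $k=\tau_{1}=1$ and $S_{1}^{(\tau)}=\{(i,j):1\leq i<j\leq n\}$ is the strict upper triangle. Every entry of $P_{A,\tau}$ in the strict upper triangle equals $1$, since each is a product of consecutive entries $a_{r,r+1}=1$. Hence the hypothesis $P_{A,\tau}[S_{1}^{(\tau)}]=P_{B,\nu}[S_{1}^{(\tau)}]$ becomes simply $P_{B,\nu}(i,j)=1$ for all $1\leq i<j\leq n$.

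Next I rotate $\nu$ so that $\nu_{1}=1$ (H-cycles are defined up to cyclic rotation) and argue by contradiction. If $\nu\neq 12\cdots n1$, then $\nu_{2}\cdots\nu_{n}$ is a permutation of $\{2,\ldots,n\}$ that is not in increasing order, so there exists $i\in\{2,\ldots,n-1\}$ with $\nu_{i}>\nu_{i+1}$. For such an $i$, the positions $(1,\nu_{i})$, $(1,\nu_{i+1})$, and $(\nu_{i+1},\nu_{i})$ all lie in $S_{1}^{(\tau)}$, so $P_{B,\nu}$ equals $1$ at each of them. Factoring the path from $\nu_{1}=1$ to $\nu_{i+1}$ along $\nu$ through $\nu_{i}$ yields
\[
1=P_{B,\nu}(1,\nu_{i+1})=P_{B,\nu}(1,\nu_{i})\,P_{B,\nu}(\nu_{i},\nu_{i+1})=P_{B,\nu}(\nu_{i},\nu_{i+1}),
\]
so $P_{B,\nu}(\nu_{i},\nu_{i+1})=1$. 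Combined with $P_{B,\nu}(\nu_{i+1},\nu_{i})=1$ and the relation $P_{B,\nu}(\nu_{i},\nu_{i+1})\,P_{B,\nu}(\nu_{i+1},\nu_{i})=\nu(B)$ from (\ref{proij}), this forces $\nu(B)=1$, contradicting $\nu\in\Gamma(B)$. Therefore $\nu=12\cdots n1=\tau$.

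The main obstacle is that the entry $P_{B,\nu}(\nu_{i},\nu_{i+1})$ lies in the strict lower triangle (since $\nu_{i}>\nu_{i+1}$) and is therefore not directly constrained by the hypothesis, unlike the situation in Theorem \ref{thequalpath} where the full path matrices coincide. The trick is to recover this entry by factoring along the cycle $\nu$ through $\nu_{i}$, which needs only upper-triangular information about $P_{B,\nu}$; the reversed entry $P_{B,\nu}(\nu_{i+1},\nu_{i})$ is itself upper-triangular, and closing the cycle via (\ref{proij}) produces the contradiction.
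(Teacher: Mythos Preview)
Your proof is correct. The one place where the exposition is a bit compressed is the sentence ``In this normalization, $k=\tau_{1}=1$'': this is not automatic from merely sending $\tau$ to $12\cdots n1$, since there are $n$ permutations doing that (one for each cyclic shift). What makes it work is that you are free to write $\tau$ starting at $k$ (so $\tau_{1}=k$) and then choose the permutation sending $\tau_{i}\mapsto i$; under this choice $k\mapsto 1$ and $S_{k}^{(\tau)}$ is carried to the strict upper triangle $S_{1}^{(12\cdots n1)}$. You clearly have this in mind, but stating it explicitly would remove any doubt.

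This extra normalization is precisely where your argument diverges from the paper's. The paper applies the same monomial similarity to put $\tau=12\cdots n1$ and $a_{i,i+1}=1$, but leaves $k$ arbitrary. Because $S_{k}^{(\tau)}$ is then not simply the upper triangle, for a descent $\nu_{i}>\nu_{i+1}$ the paper must split into cases according to which of $(1,\nu_{i})$ and $(1,\nu_{i+1})$ lie in $S_{k}^{(\tau)}$, and use the auxiliary relation $P_{A,\tau}(p,q)=\frac{\tau(A)}{\nu(B)}P_{B,\nu}(p,q)$ for positions outside $S_{k}^{(\tau)}$; in each of three cases one then computes $\nu(B)=1$. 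Your additional normalization $k=1$ collapses all of this: every relevant position $(1,\nu_{i})$, $(1,\nu_{i+1})$, $(\nu_{i+1},\nu_{i})$ is upper-triangular, so the hypothesis gives their $P_{B,\nu}$-values directly, and a single factorization plus (\ref{proij}) yields the contradiction. The paper's route has the minor advantage of making visible the role of the ratio $\tau(A)/\nu(B)$, but yours is shorter and avoids case analysis entirely.
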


\begin{proof}
Taking into account Remark \ref{Rperm}, by a simultaneous monomial similarity
on $A$ and $B,$ suppose that $\tau=123\cdots n1$ and $a_{i,i+1}=1$ for
$i=1,\ldots,n-1.$ Then $a_{n1}=\tau(A).$ Let $\nu=\nu_{1}\nu_{2}\cdots\nu
_{n}\nu_{1},$ with $\nu_{1}=1.$ Suppose that $\nu\neq\tau,$ in order to get a
contradiction. In that case, there is an $i\in\{2,\ldots,n-1\}$ such that
$\nu_{i}>\nu_{i+1}.$ We have $P_{A,\tau}(1,\nu_{i})=P_{A,\tau}(1,\nu
_{i+1})=1.$ Also, $P_{A,\tau}(\nu_{i},\nu_{i+1})=\tau(A).$ \ 

Notice that, if $(p,q)\notin S_{k}^{(\tau)},$ $p\neq q,$ then $(q,p)\in
S_{k}^{(\tau)}$ and, by assumption, $P_{A,\tau}(q,p)=P_{B,\nu}(q,p).$ Since
$P_{A,\tau}(p,q)P_{A,\tau}(q,p)=\tau(A)$ and $P_{B,\nu}(p,q)P_{B,\nu}%
(q,p)=\nu(B)$, we get $P_{A,\tau}(p,q)=sP_{B,\nu}(p,q),$ with $s=\frac
{\tau(A)}{\nu(B)}.$

Since $\nu(B)<1$, we will arrive to a contradiction in each of the possible
situations that could occur.

Case 1: Suppose that $(1,\nu_{i})\in S_{k}^{(\tau)}.$ Then, $(1,\nu
_{i+1}),(\nu_{i+1},\nu_{i})\in S_{k}^{(\tau)}$, implying that $(\nu_{i}%
,\nu_{i+1})\notin S_{k}^{(\tau)}$. Thus,%
\begin{align*}
1  &  =P_{A,\tau}(1,\nu_{i+1})=P_{B,\nu}(1,\nu_{i+1})=P_{B,\nu}(1,\nu
_{i})P_{B,\nu}(\nu_{i},\nu_{i+1})\\
&  =\frac{1}{s}P_{A,\tau}(1,\nu_{i})P_{A,\tau}(\nu_{i},\nu_{i+1})=\frac
{\tau(A)}{s}=\nu(B).
\end{align*}

Case 2: Suppose that $(1,\nu_{i})\notin S_{k}^{(\tau)}.$ We consider two subcases.

Case 2.1: Suppose that $(1,\nu_{i+1})\notin S_{k}^{(\tau)}$. As $(\nu
_{i},1),(\nu_{i+1},1)\in S_{k}^{(\tau)}$, it follows that $(\nu_{i+1},\nu
_{i})\in S_{k}^{(\tau)},$ implying that $(\nu_{i},\nu_{i+1})\notin
S_{k}^{(\tau)}.$ Thus,
\begin{align*}
1  &  =P_{A,\tau}(1,\nu_{i+1})=sP_{B,\nu}(1,\nu_{i+1})=sP_{B,\nu}(1,\nu
_{i})P_{B,\nu}(\nu_{i},\nu_{i+1})\\
&  =\frac{1}{s}P_{A,\tau}(1,\nu_{i})P_{A,\tau}(\nu_{i},\nu_{i+1})=\frac
{\tau(A)}{s}=\nu(B).
\end{align*}

Case 2.2: Suppose that $(1,\nu_{i+1})\in S_{k}^{(\tau)}$. Then $(\nu_{i+1}%
,\nu_{i})\notin S_{k}^{(\tau)},$ as $(1,\nu_{i})\notin S_{k}^{(\tau)},$
implying that $(\nu_{i},\nu_{i+1})\in S_{k}^{(\tau)}.$ Thus,
\begin{align*}
1  &  =P_{A,\tau}(1,\nu_{i+1})=P_{B,\nu}(1,\nu_{i+1})=P_{B,\nu}(1,\nu
_{i})P_{B,\nu}(\nu_{i},\nu_{i+1})\\
&  =\frac{1}{s}P_{A,\tau}(1,\nu_{i})P_{A,\tau}(\nu_{i},\nu_{i+1})=\frac
{\tau(A)}{s}=\nu(B).
\end{align*}

\end{proof}

We note that the first claim in Theorem \ref{thequalpath} follows from Theorem
\ref{thsamecycle}.

\section{Orders occurring among the efficient vectors\label{sorder}}

First we ask which orders occur among vectors in $\mathcal{E}_{\tau}(A)$.

\begin{example}
\label{Exdec0}In Example \ref{exbound} we can see that there is a unique order
for the efficient vectors $w$ for which the matrix $W$ satisfies each of the
inequalities (\ref{b1}), (\ref{b2}) and (\ref{b3}). In case (\ref{b1}), all
vectors are decreasing as $\frac{w_{i}}{w_{i+1}}\geq1$ for $i=1,2,3$. In case
(\ref{b2}), all vectors have the order%
\[
\left[
\begin{array}
[c]{c}%
1\\
2\\
4\\
3
\end{array}
\right]  ,
\]
as $\frac{w_{1}}{w_{2}}\geq1$, $\frac{w_{2}}{w_{4}}\geq1$ and $\frac{w_{4}%
}{w_{3}}\geq1$. Similarly, in case (\ref{b3}), all vectors have the order%
\[
\left[
\begin{array}
[c]{c}%
1\\
3\\
2\\
4
\end{array}
\right]  .
\]
Thus, among all the efficient vectors only the $3$ mentioned orders occur. In
particular, we can observe that, in all efficient vectors, alternative $1$ is
above the other alternatives. The latter fact can also be deduced from the
matrix interval for $W,$ with $w\in\mathcal{E}(A)$, given in Theorem
\ref{tminmax}:%
\begin{equation}
\left[
\begin{array}
[c]{cccc}%
1 & 1 & 1 & 1\\
\frac{1}{7} & 1 & \frac{6}{7} & 1\\
\frac{1}{7} & \frac{1}{7} & 1 & \frac{2}{3}\\
\frac{1}{7} & \frac{1}{7} & \frac{1}{7} & 1
\end{array}
\right]  \leq W\leq\left[
\begin{array}
[c]{cccc}%
1 & 7 & 7 & 7\\
1 & 1 & 7 & 7\\
1 & \frac{7}{6} & 1 & 7\\
1 & 1 & \frac{3}{2} & 1
\end{array}
\right]  . \label{b4}%
\end{equation}
However, these bounds give less information than (\ref{b1}), (\ref{b2}) and
(\ref{b3}). For instance, from (\ref{b1}), (\ref{b2}) and (\ref{b3}), we have
that, if $w\in\mathcal{E}(A)$ and $\frac{w_{2}}{w_{3}}=\frac{6}{7}$, then
$\frac{w_{2}}{w_{4}}\geq2$ and $\frac{w_{1}}{w_{4}}\geq6.$ This does not
follow from (\ref{b4}).
\end{example}

It may happen that all efficient vectors have a unique order, even when
$\#\Gamma(A)>1.$

\begin{example}
\label{Exdec}Let
\[
A=\left[
\begin{array}
[c]{cccc}%
1 & 1 & 3 & 4\\
1 & 1 & 2 & 3\\
\frac{1}{3} & \frac{1}{2} & 1 & 1\\
\frac{1}{4} & \frac{1}{3} & 1 & 1
\end{array}
\right]  .
\]
We have $\Gamma(A)=\{\alpha,\gamma^{\prime}\}$, with $\alpha,$ $\gamma
^{\prime}$ as in (\ref{abg}) and (\ref{abg1})$.$ Moreover, $\alpha(A)=\frac
{1}{2}$ and $\gamma^{\prime}(A)=\frac{8}{9}.$ We have%
\[
P_{A,\alpha}=\left[
\begin{array}
[c]{cccc}%
1 & 1 & 2 & 2\\
\frac{1}{2} & 1 & 2 & 2\\
\frac{1}{4} & \frac{1}{4} & 1 & 1\\
\frac{1}{4} & \frac{1}{4} & \frac{1}{2} & 1
\end{array}
\right]  ,\quad P_{A,\gamma^{\prime}}=\left[
\begin{array}
[c]{cccc}%
1 & \frac{4}{3} & \frac{8}{3} & 4\\
\frac{2}{3} & 1 & 2 & \frac{8}{3}\\
\frac{1}{3} & \frac{4}{9} & 1 & \frac{4}{3}\\
\frac{2}{9} & \frac{1}{3} & \frac{2}{3} & 1
\end{array}
\right]  .
\]
All efficient vectors have (weakly) decreasing order since $P_{A,\alpha
}(i,i+1)\geq1$ and $P_{A,\gamma^{\prime}}(i,i+1)\geq1,$ $i=1,2,3.$ This can
also be deduced from $L_{A}$ using Theorem \ref{tminmax}.
\end{example}

It may happen that a set of alternatives always lies above the remaining
alternatives without any particular order within the set.

\begin{example}
Let
\[
A=\left[
\begin{array}
[c]{cccc}%
1 & \frac{1}{2} & \ast & 4\\
2 & 1 & 6 & \ast\\
\ast & \frac{1}{6} & 1 & \frac{1}{2}\\
\frac{1}{4} & \ast & 2 & 1
\end{array}
\right]  .
\]
(A reciprocal pair of $\ast$'s means an arbitrary reciprocal pair.) Then
$\alpha=12341\in\Gamma(A).$ We have%
\[
P_{A,\alpha}=\left[
\begin{array}
[c]{cccc}%
1 & \frac{1}{2} & 3 & \frac{3}{2}\\
\frac{3}{4} & 1 & 6 & 3\\
\frac{1}{8} & \frac{1}{16} & 1 & \frac{1}{2}\\
\frac{1}{4} & \frac{1}{8} & \frac{3}{4} & 1
\end{array}
\right]  .
\]
We can see that alternatives $1$ and $2$ are above alternatives $3$ and $4$ in
all vectors in $\mathcal{E}_{\alpha}(A)$. On the other hand, since any entry
$i,j$ of $P_{A,\alpha}$ is attained by $\frac{w_{i}}{w_{j}}$ for some
$w\in\mathcal{E}_{\alpha}(A)$, there are $u,v\in\mathcal{E}_{\alpha}(A)$ such
that $u_{1}>u_{2}$ and $v_{2}>v_{1}$. Similarly, there are $u,v\in
\mathcal{E}_{\alpha}(A)$ such that $u_{3}>u_{4}$ and $v_{4}>v_{3}$. In fact,
the possible orders of the vectors in $\mathcal{E}_{\alpha}(A)$ are%
\[
\left[
\begin{array}
[c]{c}%
2\\
1\\
4\\
3
\end{array}
\right]  ,\text{ }\left[
\begin{array}
[c]{c}%
2\\
1\\
3\\
4
\end{array}
\right]  \text{, and }\left[
\begin{array}
[c]{c}%
1\\
2\\
4\\
3
\end{array}
\right]  .
\]
Each is attained by an extreme vector of $\mathcal{E}_{\alpha}(A).$ Vector
$w\in\mathcal{E}_{\alpha}(A)$ such that $W[S_{k}^{(\alpha)}]=P_{A,\alpha
}[S_{k}^{(\alpha)}]$ has the first, second and third orders above for $k=1,$
$4$ and $2$, respectively. Note that the existence of such vectors $w$ is
guaranteed by Theorem \ref{le2}. The decreasing order cannot occur since in
that case, for some $w\in\mathcal{E}_{\alpha}(A)$ we would have $\frac{w_{1}%
}{w_{2}}\geq1$ and $\frac{w_{3}}{w_{4}}\geq1.$ On the other hand, by Theorem
\ref{tt01}, $\frac{w_{3}}{w_{2}}\leq\frac{1}{6}$ and $\frac{w_{1}}{w_{4}}%
\leq4$. This gives the contradiction $6\leq4$.
\end{example}

By Theorems \ref{Rlow} and \ref{tt01}, we have the following.

\begin{theorem}
\label{thwiwj}Let $A\in\mathcal{PC}_{n}^{0},$ $\tau\in\Gamma(A),$ and $i,j\in
N$, $i\neq j$. We have $P_{A,\tau}(i,j)\geq1\ $if and only if $w_{i}\geq
w_{j}$ for all $w\in\mathcal{E}_{\tau}(A).$
\end{theorem}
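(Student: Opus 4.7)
The plan is to prove the two implications separately, since the statement is essentially a direct consequence of the tools already established, specifically Theorem \ref{Rlow} (and equivalently the lower bound half of Theorem \ref{tt01}).

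For the forward direction, I would assume $P_{A,\tau}(i,j)\geq 1$ and take an arbitrary $w\in\mathcal{E}_{\tau}(A)$. By Theorem \ref{Rlow}, $\frac{w_{i}}{w_{j}}\geq P_{A,\tau}(i,j)\geq 1$, hence $w_{i}\geq w_{j}$. This implication is immediate.

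For the converse, I would argue by contrapositive: assume $P_{A,\tau}(i,j)<1$ and produce a single $w\in\mathcal{E}_{\tau}(A)$ with $w_{i}<w_{j}$. By the ``moreover'' part of Theorem \ref{Rlow}, applied to the pair $(i,j)$, there exists $w\in\mathcal{E}_{\tau}(A)$ with $\frac{w_{i}}{w_{j}}=P_{A,\tau}(i,j)<1$, so $w_{i}<w_{j}$, refuting the right-hand side of the equivalence. Alternatively, one may invoke Theorem \ref{le2}(1) with $k=i$: by Remark \ref{rrow}, $(i,j)\in S_{i}^{(\tau)}$ for every $j\neq i$, so the extreme vector described there satisfies $\frac{w_{i}}{w_{j}}=P_{A,\tau}(i,j)$.

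There is no real obstacle here; the only subtlety is to notice that Theorem \ref{Rlow} gives both the universal lower bound and the existence of a realizing vector, which together encode exactly the equivalence claimed. In particular, the proof relies only on material already in hand and is essentially a restatement of Theorem \ref{Rlow} in the language of the component-wise order on efficient vectors.
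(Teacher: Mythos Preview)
Your proof is correct and follows exactly the paper's approach: the paper simply states that the theorem follows from Theorems \ref{Rlow} and \ref{tt01}, and your argument is the natural two-line elaboration of that citation (forward direction from the lower bound, converse from the attainability clause).
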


Let $i_{1},\ldots,i_{p},j_{1},\ldots,j_{q}\in N$ be distinct and $p+q=n$,
$p,q\geq1.$ We have $P_{A,\tau}(i,j)\geq1$ for any $i\in\{i_{1},\ldots
,i_{p}\}$ and $j\in\{j_{1},\ldots,j_{q}\}$ if and only if alternatives
$i_{1},\ldots,i_{p}$ are above alternatives $j_{1},\ldots,j_{q}$ in all
$w\in\mathcal{E}_{\tau}(A)$. If the same holds for the matrix obtained from
$P_{A,\tau}$ by deleting rows and columns $i_{1},\ldots,i_{p}$, then we get a
new set of alternatives that is above the remaining ones. In that way, we may
identify a partition $S_{1},\ldots,S_{t}\ $of $N$ such that any alternative in
$S_{i}$ is (weakly) above any alternative in $S_{i+1}$ for all $w\in
\mathcal{E}_{\tau}(A),$ and no such partition is possible within any set
$S_{i}$ (we may have $t=1$). This gives a partial order for the alternatives
in $\mathcal{E}_{\tau}(A)$.

In particular, if all entries of the $k$th row ($k$th column) of $P_{A,\tau}$
are $\geq1$, then alternative $k$ is above (below) all the remaining
alternatives, for all $w\in\mathcal{E}_{\tau}(A)$.

\bigskip

The next result gives necessary and sufficient conditions for all efficient
vectors in a set $\mathcal{E}_{\tau}(A)$ to have a unique order.

\begin{theorem}
\label{torderset}Let $A\in\mathcal{PC}_{n}^{0}\ $and $\tau\in\Gamma(A)$. The
following are equivalent:

\begin{enumerate}
\item all vectors in $\mathcal{E}_{\tau}(A)$ have the same order;

\item there is a permutation $i_{1}i_{2}\cdots i_{n}$ of $N$ such that
$P_{A,\tau}(i_{t},i_{t+1})\geq1$ for $t=1,\ldots,n-1$;

\item $P_{A,\tau}$ has exactly $\frac{n^{2}-n}{2}$ off-diagonal entries
$\geq1$;

\item for $i,j\in N$, $i>j,$ $P_{A,\tau}(i,j)\geq1$ or $P_{A,\tau}(j,i)\geq1$.
\end{enumerate}
\end{theorem}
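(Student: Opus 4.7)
The plan is to mirror the structure of Theorem~\ref{Tuo}: first establish $3 \Leftrightarrow 4$ directly from the product relation $p_{ij}p_{ji} = \tau(A) < 1$, and then close the cycle $4 \Rightarrow 2 \Rightarrow 1 \Rightarrow 4$. All four pieces are short once Theorems~\ref{Rlow} and~\ref{thwiwj} are in hand.

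For $3 \Leftrightarrow 4$, I would argue from (\ref{proij}): since $p_{ij}p_{ji}=\tau(A)<1$ for every $i \neq j$, at most one of $p_{ij},p_{ji}$ can be $\geq 1$. Hence $P_{A,\tau}$ has at most $\binom{n}{2}=\frac{n^2-n}{2}$ off-diagonal entries $\geq 1$, and equality holds exactly when every unordered pair $\{i,j\}$ contributes one such entry, which is condition 4.

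For $4 \Rightarrow 2$, condition 4 combined with the previous observation says that the digraph on $N$ with an arc $i \to j$ whenever $p_{ij} \geq 1$ is a tournament (exactly one arc between each pair). By the classical theorem of R\'edei, every tournament contains a Hamiltonian path $i_1 \to i_2 \to \cdots \to i_n$, which furnishes the desired permutation with $p_{i_t,i_{t+1}} \geq 1$ for $t=1,\ldots,n-1$. The implication $2 \Rightarrow 1$ then follows at once from Theorem~\ref{thwiwj} (or directly from Theorem~\ref{Rlow}): for any $w \in \mathcal{E}_\tau(A)$, $\frac{w_{i_t}}{w_{i_{t+1}}} \geq p_{i_t,i_{t+1}} \geq 1$, so every $w \in \mathcal{E}_\tau(A)$ satisfies the common weak order $w_{i_1} \geq w_{i_2} \geq \cdots \geq w_{i_n}$.

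Finally, for $1 \Rightarrow 4$, I would argue by contraposition. If there is a pair $(i,j)$ with $p_{ij}<1$ and $p_{ji}<1$, Theorem~\ref{Rlow} produces $w,v \in \mathcal{E}_\tau(A)$ with $w_i/w_j = p_{ij}<1$ and $v_j/v_i = p_{ji}<1$; thus $w_i<w_j$ strictly while $v_i>v_j$ strictly, and no single weak order can accommodate both. The only real obstacle is the step $4 \Rightarrow 2$, which leans on the external combinatorial fact that every tournament admits a Hamiltonian path; if a self-contained version is preferred, this can be proved by induction on $n$ (delete a vertex, apply the inductive hypothesis, and splice the vertex back into the path by scanning for the correct position using condition 4).
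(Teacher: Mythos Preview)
Your proof is correct, and the $3 \Leftrightarrow 4$ argument via $p_{ij}p_{ji}=\tau(A)<1$ matches the paper's exactly. For the equivalence of 1, 2, and 3 the paper gives no argument of its own, merely pointing to the analogous proof of Theorem~\ref{Tuo} in \cite{FJ8}; your explicit cycle $4 \Rightarrow 2 \Rightarrow 1 \Rightarrow 4$, with R\'edei's tournament theorem supplying $4 \Rightarrow 2$ and Theorem~\ref{Rlow} handling the other two implications, is a clean self-contained substitute for that citation.
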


\begin{proof}
The proof of the equivalence between 1., 2. and 3. is similar to the proof of
Theorem \ref{Tuo}, given in \cite{FJ8}. Since $P_{A,\tau}(i,j)P_{A,\tau
}(j,i)=\tau(A)<1$, conditions 3. and 4. are equivalent.
\end{proof}

\bigskip

If $w_{i}\geq w_{j}$ for every $w$ in every $\mathcal{E}_{\tau}(A)$, $\tau
\in\Gamma(A)$, then, of course, $w_{i}\geq w_{j}$ for every $w\in
\mathcal{E}(A)$. But then, since $l_{ij}=\min_{\tau\in\Gamma(A)}P_{A,\tau
}(i,j)\geq1$ (Theorem \ref{thwiwj}), the same conclusion could have been drawn
using $L_{A}$ and Theorem \ref{tminmax}. Similarly, facts that result from
observations about orders that are common to every $\mathcal{E}_{\tau}(A)$ may
also be derived from $L_{A}$. For example, if the partial order on the
alternatives in each set is the same, it also follows from $L_{A}$ in a
similar way. See Examples \ref{Exdec0} and \ref{Exdec}.

\section{$\mathcal{E}(A)=\mathcal{E}(B)$ implies $A=B$: background\label{s60}}

We conjecture that, for $A,B\in\mathcal{PC}_{n}$, if $\mathcal{E}%
(A)=\mathcal{E}(B)$, then $A=B$. Here we give some general background and
facts relevant to this conjecture, before proving it in some particular
situations in later sections. First we note a simple case in which the
conjecture is true.

If $A,B\in\mathcal{PC}_{n},$ $A$ is consistent and $\mathcal{E}(A)=\mathcal{E}%
(B)$, then $A=B$. This is so since all columns of $A$ are proportional and
$\mathcal{E}(A)$ consists of the positive multiples of the columns of $A$.
Since the columns of $B$ are in $\mathcal{E}(B)$ \cite{FJ1}, $\mathcal{E}%
(A)=\mathcal{E}(B)$ implies that all columns of $A$ and $B$ are proportional.
They are uniquely determined because the diagonal entries are $1$. So, $A=B$.
Thus, we focus on the conjecture when $A,B\in\mathcal{PC}_{n}^{0}$.

In \cite{FJ8} we have noticed the following.

\begin{theorem}
\label{thLAA}Let $A,B\in\mathcal{PC}_{n}^{0}$. If $\mathcal{E}(A)=\mathcal{E}%
(B),$ then $L_{A}=L_{B}.$
\end{theorem}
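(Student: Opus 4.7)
The plan is to show that each entry of $L_A$ can be characterized purely in terms of the set $\mathcal{E}(A)$, with no reference to $A$ itself beyond its efficient vectors. Once this intrinsic characterization is in hand, the conclusion $L_A=L_B$ follows immediately from the hypothesis $\mathcal{E}(A)=\mathcal{E}(B)$.

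Specifically, I will argue that for any $i,j\in N$ with $i\neq j$,
\[
l_{ij}=\min_{w\in\mathcal{E}(A)}\frac{w_i}{w_j}.
\]
The lower bound direction is exactly Theorem \ref{Rlow} together with Theorem \ref{TeEaconvexunion}: any $w\in\mathcal{E}(A)$ lies in some $\mathcal{E}_\tau(A)$ with $\tau\in\Gamma(A)$, and for such $w$, $\tfrac{w_i}{w_j}\geq P_{A,\tau}(i,j)\geq l_{ij}$. (Alternatively, this is the lower bound half of Theorem \ref{tminmax}.) For the matching upper bound, I need to produce, for each fixed pair $(i,j)$, an actual efficient vector attaining the minimum. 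Choose $\tau\in\Gamma(A)$ achieving the minimum in the definition of $l_{ij}$, i.e.\ with $P_{A,\tau}(i,j)=l_{ij}$; then the second assertion in Theorem \ref{Rlow} produces a $w\in\mathcal{E}_\tau(A)\subseteq\mathcal{E}(A)$ with $\tfrac{w_i}{w_j}=P_{A,\tau}(i,j)=l_{ij}$.

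Combining the two directions yields the characterization $l_{ij}=\min_{w\in\mathcal{E}(A)}\tfrac{w_i}{w_j}$, and of course $l_{ii}=1$ by definition. The same formula holds verbatim for $B$. Hence, if $\mathcal{E}(A)=\mathcal{E}(B)$, then for every pair $(i,j)$ the minimum defining $l_{ij}$ for $A$ and the analogous minimum for $B$ are taken over the same set of ratios $w_i/w_j$, so $L_A=L_B$.

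There is no real obstacle here; the result is essentially a direct bookkeeping consequence of Theorems \ref{Rlow} and \ref{TeEaconvexunion}, and the only step one must be slightly careful about is the attainment claim (not just the inequality), which is precisely the second sentence of Theorem \ref{Rlow}.
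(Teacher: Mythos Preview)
Your argument is correct. The paper does not actually supply a proof of this theorem; it merely attributes the result to \cite{FJ8} with the phrase ``In \cite{FJ8} we have noticed the following.'' Your approach---characterizing $l_{ij}$ intrinsically as $\min_{w\in\mathcal{E}(A)} w_i/w_j$ via Theorems \ref{TeEaconvexunion} and \ref{Rlow}, and then invoking the attainment clause of Theorem \ref{Rlow}---is exactly the natural argument and fills in the omitted proof cleanly.
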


It may happen that $L_{A}=L_{B}$ does not imply $\mathcal{E}(A)=\mathcal{E}%
(B)$.

\begin{example}
\label{exn}Let
\[
A=\left[
\begin{array}
[c]{ccccc}%
1 & 1 & \frac{1}{10} & 9 & 500\\
1 & 1 & 1 & 5 & 11\\
10 & 1 & 1 & 1 & 30\\
\frac{1}{9} & \frac{1}{5} & 1 & 1 & 1\\
\frac{1}{500} & \frac{1}{11} & \frac{1}{30} & 1 & 1
\end{array}
\right]  \text{ and }B=\left[
\begin{array}
[c]{ccccc}%
1 & 1 & \frac{1}{10} & 9 & 500\\
1 & 1 & 1 & 5 & 11\\
10 & 1 & 1 & 1 & \mathbf{30.1}\\
\frac{1}{9} & \frac{1}{5} & 1 & 1 & 1\\
\frac{1}{500} & \frac{1}{11} & \frac{\mathbf{1}}{\mathbf{30.1}} & 1 & 1
\end{array}
\right]  .
\]
We have
\[
L_{A}=L_{B}=\left[
\begin{array}
[c]{ccccc}%
1 & \frac{1}{110} & \frac{1}{10} & \frac{1}{10} & \frac{1}{10}\\
\frac{1}{500} & 1 & \frac{1}{1000} & \frac{11}{5000} & \frac{1}{10}\\
\frac{1}{500} & \frac{1}{500} & 1 & \frac{99}{500} & 1\\
\frac{1}{500} & \frac{1}{5000} & \frac{1}{5000} & 1 & \frac{11}{90}\\
\frac{1}{500} & \frac{1}{25000} & \frac{1}{5000} & \frac{1}{5000} & 1
\end{array}
\right]  .
\]
However, $\mathcal{E}(A)\neq\mathcal{E}(B)$. For example,
\[
w=\left[
\begin{array}
[c]{ccccc}%
5 & 5 & \frac{3}{10} & 1 & \frac{1}{100}%
\end{array}
\right]  ^{T}\in\mathcal{E}(A),
\]
but $w\notin\mathcal{E}(B)$. See \cite{blanq2006,FJ2} for a test of efficiency.
\end{example}

\bigskip

We have the following consequence of Theorem \ref{thLAA}, noticed in
\cite{FJ8}.

\begin{corollary}
Let $A,B\in\mathcal{PC}_{n}^{0}$. If $A\neq B$ and $A$ and $B$ are positive
diagonally similar then $\mathcal{E}(A)\neq\mathcal{E}(B).$
\end{corollary}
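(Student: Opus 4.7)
The plan is to prove the corollary by contrapositive, relying directly on Theorem~\ref{thLAA}: if $\mathcal{E}(A) = \mathcal{E}(B)$ then $L_A = L_B$. So I will show that, under diagonal similarity, the equality $L_A = L_B$ forces $A = B$, which gives the desired contradiction. Write $B = D^{-1}AD$ for some positive diagonal matrix $D = \operatorname{diag}(d_1,\ldots,d_n)$.

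First I would establish how $L_A$ transforms under positive diagonal similarity. By Lemma~\ref{LpermutLA}, for each H-cycle $\tau$ in $N$, $P_{B,\tau} = D^{-1} P_{A,\tau} D$, so the $i,j$ entry of $P_{B,\tau}$ equals $\frac{d_j}{d_i} P_{A,\tau}(i,j)$. Moreover, the remark immediately after the proof of Lemma~\ref{LpermutLA} gives $\tau(B) = \tau(A)$ for every H-cycle $\tau$, so $\Gamma(A) = \Gamma(B)$. Taking entry-wise minima over this common index set yields
\[
(L_B)_{ij} \;=\; \min_{\tau \in \Gamma(B)} P_{B,\tau}(i,j) \;=\; \frac{d_j}{d_i} \min_{\tau \in \Gamma(A)} P_{A,\tau}(i,j) \;=\; \frac{d_j}{d_i}\,(L_A)_{ij},
\]
i.e., $L_B = D^{-1} L_A D$.

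Now I would combine this with Theorem~\ref{thLAA}. Assume, for contradiction, that $\mathcal{E}(A) = \mathcal{E}(B)$. Then $L_A = L_B = D^{-1} L_A D$, so $D L_A = L_A D$. Comparing $i,j$ entries for $i \ne j$, $d_i (L_A)_{ij} = (L_A)_{ij} d_j$; since every $P_{A,\tau}(i,j)$ is strictly positive, so is $(L_A)_{ij}$, and therefore $d_i = d_j$ for all $i \ne j$. Hence $D$ is a scalar multiple of the identity, which forces $B = D^{-1} A D = A$, contradicting $A \neq B$.

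There is no real obstacle here: once one has the transformation law $L_B = D^{-1} L_A D$, which is built directly from Lemma~\ref{LpermutLA} and the invariance $\Gamma(A) = \Gamma(B)$, positivity of $L_A$ does the rest. The only point that requires any care is the identification $\Gamma(A) = \Gamma(B)$, ensuring the minima in the definitions of $L_A$ and $L_B$ are taken over matching families of H-cycles.
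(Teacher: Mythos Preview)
Your proof is correct and follows essentially the same approach as the paper's: both establish the transformation law $L_{B}=D^{-1}L_{A}D$ (equivalently $L_{B}=DL_{A}D^{-1}$ depending on the convention for $D$) and then use positivity of the entries of $L_{A}$ together with Theorem~\ref{thLAA} to force $D$ to be scalar. Your version is somewhat more explicit in justifying the transformation law via Lemma~\ref{LpermutLA} and the invariance $\Gamma(A)=\Gamma(B)$, whereas the paper simply asserts it, but the underlying argument is the same.
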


\begin{proof}
If $B=DAD^{-1},$ in which $D=\operatorname*{diag}(d_{1},\ldots,d_{n})$ is a
positive diagonal matrix, then $L_{B}=DL_{A}D^{-1}.$ Since there are $i,j$
such that $\frac{d_{i}}{d_{j}}\neq1,$ it follows that the $i,j$ entries of
$L_{A}$ and $L_{B}$ are distinct. Thus, the result follows from Theorem
\ref{thLAA}.
\end{proof}

\bigskip When $n=3,$ our conjecture is known to be true \cite{FJ8}. In the
next section we consider a class of matrices in $\mathcal{PC}_{n}$ that
coincides with $\mathcal{PC}_{3}$ when $n=3.$

\section{$\mathcal{E}(A)=\mathcal{E}(B)$ implies $A=B$: the simple perturbed
case\label{s6}}

A simple perturbed consistent (SPC) matrix $A\in\mathcal{PC}_{n}$ is a
reciprocal matrix obtained from a consistent one by modifying one
symmetrically placed pair of reciprocal entries \cite{AbeleBozoki2016}. Such a
matrix (inconsistent) is diagonally similar to a matrix $S_{l,k}(x)$ with
entries $x\neq1$ and $\frac{1}{x}$ in positions $l,k$ and $k,l$ ($l\neq k$),
respectively, and all the remaining entries $1.$ With an additional
permutation similarity, $l$ and $k$ can be made $1$ and $n$, respectively, and
$x$ may be assumed $>1$. (If $A$ is consistent it is diagonally similar to
$J_{n}$.) From Theorem \ref{TeEaconvexunion} (see also \cite{CFF}),
$w\in\mathcal{E}(S_{1,n}(x))$, $x>1$, if and only if%
\[
w_{n}\leq w_{i}\leq w_{1}\leq xw_{n}\text{, }i=2,\ldots,n-1.
\]
If $A=QS_{1,n}(x)Q^{-1},$ in which $Q$ is a monomial matrix, by Lemma
\ref{lsim}, $v\in\mathcal{E}(A)$ if and only if $Q^{-1}v\in\mathcal{E}%
(S_{1,n}(x)).$

\begin{theorem}
\label{Thsimplepert}Let $A,B\in\mathcal{PC}_{n}^{0}$ be SPC matrices. Then,
$L_{A}=L_{B}$ implies $A=B.$ Therefore, $\mathcal{E}(A)=\mathcal{E}(B)$
implies $A=B$.
\end{theorem}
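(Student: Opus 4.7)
The plan is to establish the stronger assertion that $L_A = L_B$ forces $A = B$ for SPC matrices; the conclusion about $\mathcal{E}(A)=\mathcal{E}(B)$ then follows at once from Theorem \ref{thLAA}. The overall strategy is to compute $L_A$ in closed form for a general SPC matrix and read off from it the three parameters (the perturbed pair of indices, the perturbation factor, and the diagonal ratios) that jointly determine $A$.

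First I would write $A = D_A S_{l_A, k_A}(x_A) D_A^{-1}$ with $x_A > 1$ (after possibly swapping the roles of $l_A$ and $k_A$). Since cycle products are preserved under diagonal similarity, $\Gamma(A) = \Gamma(S_{l_A,k_A}(x_A))$ is the set of the $(n-2)!$ Hamiltonian cycles that traverse the edge $(k_A, l_A)$. By Lemma \ref{LpermutLA}, each such $\tau$ yields $P_{A,\tau}(i,j) = (d^A_i/d^A_j)\,P_{S_{l_A,k_A}(x_A),\tau}(i,j)$, with the second factor equal to $1$ when the directed path from $i$ to $j$ along $\tau$ avoids the edge $(k_A, l_A)$ and equal to $1/x_A$ when it crosses it. Minimizing over $\tau \in \Gamma(A)$ then gives
\[
L_A(i,j) = \frac{d^A_i}{d^A_j}\, g_A(i,j),
\]
where $g_A(i,j) = 1$ if $i = l_A$ or $j = k_A$ and $g_A(i,j) = 1/x_A$ otherwise; the ``$1$'' case arises because ``$i$ precedes $j$ in every linear arrangement of $N \setminus \{l_A, k_A\}$ placed between $l_A$ and $k_A$'' is equivalent to $i = l_A$ or $j = k_A$.

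Next, assuming $L_A = L_B$, I would extract the parameters in three stages. The product identity $L_A(i,j) L_A(j,i) = g_A(i,j) g_A(j,i) \in \{1/x_A,\, 1/x_A^2\}$, with the minimum $1/x_A^2$ attained exactly when $\{i,j\} \cap \{l_A, k_A\} = \emptyset$, yields $x_A = x_B$ and $\{l_A, k_A\} = \{l_B, k_B\}$ whenever $n \geq 4$. Second, to distinguish the role of $l$ from that of $k$, I would suppose for contradiction that $l_A = k_B$ and $k_A = l_B$, pick any $j \notin \{l_A, k_A\}$, and compare $L_A$ with $L_B$ at the positions $(l_A, j)$, $(j, k_A)$ and $(l_A, k_A)$; the first two identities force $d^B_{l_A}/d^B_{k_A} = x^2\, d^A_{l_A}/d^A_{k_A}$, while the third forces $d^B_{l_A}/d^B_{k_A} = x\, d^A_{l_A}/d^A_{k_A}$, contradicting $x > 1$. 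Third, once $(l_A, k_A, x_A) = (l_B, k_B, x_B)$, cancelling the common factor $g(i,j)$ in $L_A = L_B$ gives $d^A_i/d^A_j = d^B_i/d^B_j$ for all $i,j$, whence $A = B$.

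The case $n = 3$ admits a much shorter argument: $\Gamma(A)$ and $\Gamma(B)$ each contain a single Hamiltonian cycle, so $L_A$ and $L_B$ \emph{are} path matrices, and Theorem \ref{thequalpath} applied to $L_A = L_B$ immediately gives $\tau = \nu$ together with equality of the entries of $A$ and $B$ along $\tau$; since a $3 \times 3$ reciprocal matrix is determined by its entries along any Hamiltonian cycle, $A = B$ follows. The main obstacle in the $n \geq 4$ argument is the bookkeeping required to rule out $l_A = k_B$ from $L_A = L_B$, which hinges on selecting a witness index $j \in N \setminus \{l_A, k_A\}$; this complement has only one element when $n = 3$, so the structural analysis degenerates there and that case must be treated separately as above.
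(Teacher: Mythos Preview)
Your argument is correct and reaches the same conclusion, but the route differs from the paper's. The paper first applies a simultaneous monomial similarity to put $A$ in the normal form $S_{1,n}(x)$ with $x>1$, writes $B=DS_{l,k}(y)D^{-1}$, and then runs a six-case analysis on the possible positions of $(l,k)$ relative to $\{1,n\}$, equating individual entries of $L_A$ and $L_B$ in each case until a contradiction (or $A=B$) emerges. You instead keep both matrices in general position, derive the closed form $L_A(i,j)=(d^A_i/d^A_j)\,g_A(i,j)$, and exploit the diagonal-free invariant $L_A(i,j)L_A(j,i)\in\{1/x_A,1/x_A^2\}$ to read off $x_A$ and $\{l_A,k_A\}$ simultaneously for $n\geq 4$; the orientation of $(l,k)$ is then settled by a short three-entry comparison rather than exhaustive cases. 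Your approach is more invariant-theoretic and avoids the case split; the paper's normalization makes each individual entry comparison concrete at the cost of that enumeration. Your separate handling of $n=3$ via Theorem~\ref{thequalpath} is also a genuinely different (and clean) shortcut: the paper absorbs $n=3$ into the same case analysis by restricting to $(l,k)\in\{(1,3),(3,1)\}$.
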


\begin{proof}
The second claim follows from the first one by Theorem \ref{thLAA}. To show
the first claim, by a simultaneous monomial similarity, suppose, without loss
of generality, that $A=S_{1,n}(x),$ $x>1,$ and $B=DS_{l,k}(y)D^{-1},$ in which
$D=\operatorname*{diag}(d_{1},\ldots,d_{n})$ is a positive diagonal matrix and
$y>1$. If $n=3$, we may assume $l=1$ and $k=3$, or $l=3$ and $k=1$.

Note that there are $(n-2)!$ H-cycles in $\Gamma(A)$, which are precisely
those for which $1$ adjacently succeeds $n$. All have cycle products equal to
$\frac{1}{x}$. Similarly, the H-cycles in $\Gamma(B)$ are precisely those for
which $l$ adjacently succeeds $k$. The matrix $L_{A}$ has the entries in row
$1$ and the entries in column $n$ equal to $1$, and all the remaining
off-diagonal entries equal to $\frac{1}{x}.$ The matrix $L_{B}$ has the $l,j$
and $i,k$ entries, with $j\neq l$ and $i\neq k$, equal to $\frac{d_{l}}{d_{j}%
}$ and $\frac{d_{i}}{d_{k}},$ and the remaining entries in positions $i,j$,
with $i\neq j$, equal to $\frac{d_{i}}{d_{j}}\frac{1}{y}$. Suppose that
$L_{A}=L_{B}$. We want to see that $D$ is scalar, $l=1,$ $k=n$ and $x=y.$

Suppose that $k,l\notin\{1,n\}.$ We have $1=L_{A}(1,l)=L_{B}(1,l)=\frac{d_{1}%
}{d_{l}}\frac{1}{y}$ and $\frac{1}{x}=L_{A}(l,1)=L_{B}(l,1)=\frac{d_{l}}%
{d_{1}}.$ Thus $x=y.$ Also, $\frac{1}{x}=L_{A}(l,k)=L_{B}(l,k)=\frac{d_{l}%
}{d_{k}}$ and $\frac{1}{x}=L_{A}(k,l)=L_{B}(k,l)=\frac{d_{k}}{d_{l}}\frac
{1}{x}.$ Thus, $\frac{d_{l}}{d_{k}}=\frac{1}{x}$ and $\frac{d_{l}}{d_{k}}=1$,
a contradiction.

Suppose that $l=n$ and $k\neq1$. We have $1=L_{A}(1,k)=L_{B}(1,k)=\frac{d_{1}%
}{d_{k}}$ and $\frac{1}{x}=L_{A}(k,1)=L_{B}(k,1)=\frac{d_{k}}{d_{1}}\frac
{1}{y},$ implying $y=x.$ Also, for $i\neq1,k,n$, $\frac{1}{x}=L_{A}%
(i,k)=L_{B}(i,k)=\frac{d_{i}}{d_{k}}$ and $\frac{1}{x}=L_{A}(k,i)=L_{B}%
(k,i)=\frac{d_{k}}{d_{i}}\frac{1}{y},$ implying $y=x^{2}.$ Thus, $y=x=1$, a contradiction.

Suppose that $k=1$ and $l\neq n$. We have $1=L_{A}(l,n)=L_{B}(l,n)=\frac
{d_{l}}{d_{n}}$ and $\frac{1}{x}=L_{A}(n,l)=L_{B}(n,l)=\frac{d_{n}}{d_{l}%
}\frac{1}{y},$ implying $y=x.$ Also, for $j\neq1,l,n$, $\frac{1}{x}%
=L_{A}(l,j)=L_{B}(l,j)=\frac{d_{l}}{d_{j}}$ and $\frac{1}{x}=L_{A}%
(j,l)=L_{B}(j,l)=\frac{d_{j}}{d_{l}}\frac{1}{y},$ implying $y=x^{2}.$ Thus,
$y=x=1$, a contradiction.

Suppose that $k=1$ and $l=n$. Equating the last rows, and the first columns,
of $L_{A}$ and $L_{B},$ it follows that $d_{1}=\cdots=d_{n}.$ Then,
$1=L_{A}(1,2)=L_{B}(1,2)=\frac{d_{1}}{d_{2}}\frac{1}{y}=\frac{1}{y}$, a contradiction.

Suppose that $l=1$. Equating the first rows of $L_{A}$ and $L_{B},$ it follows
that $d_{1}=\cdots=d_{n}.$ If $k\neq n$, for any $i\neq1,k,$ we have $\frac
{1}{x}=L_{A}(i,k)=L_{B}(i,k)=\frac{d_{i}}{d_{k}}=1$, a contradiction. Thus,
$k=n\ $and $x=y$, as desired.

Suppose that $k=n$. Equating the last columns of $L_{A}$ and $L_{B},$ it
follows that $d_{1}=\cdots=d_{n}.$ If $l\neq1$, for any $j\neq n,l$ we have
$\frac{1}{x}=L_{A}(l,j)=L_{B}(l,j)=\frac{d_{l}}{d_{j}}=1$, a contradiction.
Thus, $l=1\ $and $x=y$, as desired.
\end{proof}

\bigskip

Any matrix in $\mathcal{PC}_{3}$ is an SPC matrix. Thus, Theorem
\ref{Thsimplepert} implies that, when $A,B\in\mathcal{PC}_{3},$ $\mathcal{E}%
(A)=\mathcal{E}(B)$ implies $A=B$, as noticed in \cite{FJ8}. In Section
\ref{sn4}, we show that this result is valid when $A,B\in\mathcal{PC}_{4}.$
Next we give some important tools for this purpose.

\section{Necessary conditions for $\mathcal{E}(A)=\mathcal{E}(B)$ for general
$n$\label{snec}}

Here we give other necessary conditions for $\mathcal{E}(A)=\mathcal{E}(B)$
that will be important in the next section.

Given matrices $A,B\in M_{n}$ and $S\subseteq N\times N,$ we say that $A[S]$
\emph{dominates} $B[S]$ if $A[S]\leq B[S]$ entry-wise$.$ The dominance is
strict if the inequality is strict for at least one entry. Note that, from
Theorem \ref{thsamecycle}, for $A,B\in\mathcal{PC}_{n}^{0},$ $\tau\in
\Gamma(A)$ and $k\in N$, if there is a $\nu\in\Gamma(A),$ $\nu\neq\tau,$ such
that $P_{A,\nu}[S_{k}^{(\tau)}]$ dominates $P_{B,\tau}[S_{k}^{(\tau)}]$, then
the dominance is strict.

If $A\in\mathcal{PC}_{n}^{0}$ and $\tau\in\Gamma(A),$ we say that $P_{A,\tau
}[S]$ is $(A,S)$\emph{-undominated} if there is no $\nu\in\Gamma(A)$, $\nu
\neq\tau$,$\ $such that $P_{A,\nu}[S]$ dominates $P_{A,\tau}[S]$. Note that,
if $P_{A,\tau}[S_{k}^{(\tau)}]$ is $(A,S_{k}^{(\tau)})$-undominated, then
$w\in\mathbb{R}_{+}^{n}$ such that $W[S_{k}^{(\tau)}]=P_{A,\tau}[S_{k}%
^{(\tau)}]$ is an extreme of $\mathcal{E}_{\tau}(A)$ (by Theorem \ref{le2})
and also of $\mathcal{E}(A),$ as there is no $\nu\in\Gamma(A)$, $\nu\neq\tau$,
such that $w\in\mathcal{E}_{\nu}(A)$ (by Theorem \ref{tt012}).

\bigskip

From Theorems \ref{Rlow} and \ref{tminmax} it follows that $\mathcal{E}%
(A)\subseteq\mathcal{E}(B)$ implies $L_{B}\leq L_{A}$ entry-wise. Next we give
an improvement on this observation.

\begin{theorem}
\label{lemsamerow}Let $A,B\in\mathcal{PC}_{n}^{0}$ and suppose that
$\mathcal{E}(A)\subseteq\mathcal{E}(B)$. Then, for each $k\in N$ and each
$\tau\in\Gamma(A),$ there is $\nu\in\Gamma(B)$ such that $P_{B,\nu}%
[S_{k}^{(\tau)}]\leq P_{A,\tau}[S_{k}^{(\tau)}]$.
\end{theorem}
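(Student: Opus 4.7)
The plan is to combine Theorem \ref{le2}, Theorem \ref{TeEaconvexunion}, and Theorem \ref{tt01} (equivalently Theorem \ref{Rlow}) in a single straightforward chain. The guiding idea is that the lower bound inequality $P_{B,\nu}\leq W$ comes for free once we know $w\in\mathcal{E}_{\nu}(B)$, so it suffices to exhibit, for each $(k,\tau)$, a distinguished vector $w\in\mathcal{E}_{\tau}(A)$ whose $W$-matrix realizes the bound $P_{A,\tau}$ exactly on the positions indexed by $S_{k}^{(\tau)}$.

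First, I would fix $k\in N$ and $\tau\in\Gamma(A)$ and invoke Theorem \ref{le2}(1) to produce the extreme vector $w\in\mathcal{E}_{\tau}(A)$ satisfying
\[
W[S_{k}^{(\tau)}]=P_{A,\tau}[S_{k}^{(\tau)}],
\]
where $W=ww^{(-T)}$. Since $\mathcal{E}_{\tau}(A)\subseteq\mathcal{E}(A)$ and, by hypothesis, $\mathcal{E}(A)\subseteq\mathcal{E}(B)$, we obtain $w\in\mathcal{E}(B)$.

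Next, I would apply Theorem \ref{TeEaconvexunion} to $B$: the membership $w\in\mathcal{E}(B)$ forces $w\in\mathcal{E}_{\nu}(B)$ for some $\nu\in\Gamma(B)$. Applying Theorem \ref{tt01} (or equivalently Theorem \ref{Rlow}) to this $\nu$ and $w$ yields $P_{B,\nu}\leq W$ entry-wise. Restricting this inequality to the positions indexed by $S_{k}^{(\tau)}$ gives
\[
P_{B,\nu}[S_{k}^{(\tau)}]\leq W[S_{k}^{(\tau)}]=P_{A,\tau}[S_{k}^{(\tau)}],
\]
which is the desired conclusion.

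There is no serious obstacle here: the whole argument is a one-line application of the earlier theorems, and the only nontrivial ingredient is having identified (in Theorem \ref{le2}) that each $P_{A,\tau}[S_{k}^{(\tau)}]$ is attained by a specific $w\in\mathcal{E}_{\tau}(A)$. The $\nu$ produced by the argument depends on this $w$ (and hence, implicitly, on the choice of $k$ and $\tau$), and nothing in the statement requires uniqueness of $\nu$, so no additional work is needed in that direction.
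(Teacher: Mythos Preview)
Your proposal is correct and follows essentially the same approach as the paper's own proof: invoke Theorem \ref{le2} to obtain $w\in\mathcal{E}_{\tau}(A)$ with $W[S_{k}^{(\tau)}]=P_{A,\tau}[S_{k}^{(\tau)}]$, use the hypothesis and Theorem \ref{TeEaconvexunion} to place $w$ in some $\mathcal{E}_{\nu}(B)$, and then apply Theorem \ref{tt01} to conclude. The paper's argument is identical, only slightly more terse.
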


\begin{proof}
Suppose that $\mathcal{E}(A)\subseteq\mathcal{E}(B)$. By Theorem \ref{le2},
there is $w\in\mathcal{E}(A)$ such that $P_{A,\tau}[S_{k}^{(\tau)}%
]=W[S_{k}^{(\tau)}].$ Since $w\in\mathcal{E}(B)$, there is $\nu\in\Gamma(B)$
such that $w\in\mathcal{E}_{\nu}(B)$. Thus, by Theorem \ref{tt01}, $P_{B,\nu
}\leq W$. Therefore, $P_{B,\nu}[S_{k}^{(\tau)}]\leq W[S_{k}^{(\tau
)}]=P_{A,\tau}[S_{k}^{(\tau)}]$.
\end{proof}

\bigskip

Next we state the main result of this section. It gives conditions under
which, when $\mathcal{E}(A)=\mathcal{E}(B)$, certain H-cycles in $\Gamma(A)$
are also in $\Gamma(B).$ We will use the next two auxiliary results.

\begin{lemma}
\label{ldecrea}Let $A\in\mathcal{PC}_{n}^{0}$ and $\tau\in\Gamma(A).$ If
$w\in\mathcal{E}_{\tau}(A)$ is such that $\frac{w_{i}}{w_{j}}>P_{A,\tau
}(i,j),$ for some $i,j\in N,$ $i\neq j,$ then there is an arbitrarily small
perturbation $w^{\prime}$ of $w$ such that $w^{\prime}\in\mathcal{E}_{\tau
}(A)$ and $\frac{w_{i}^{\prime}}{w_{j}^{\prime}}<\frac{w_{i}}{w_{j}}.$
\end{lemma}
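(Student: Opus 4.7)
The plan is to exploit the chain structure of $\mathcal{E}_{\tau}(A)$ and show that the hypothesized strict inequality $\frac{w_{i}}{w_{j}} > P_{A,\tau}(i,j)$ must localize into a strict inequality on at least one consecutive edge of the cycle $\tau$. Since an H-cycle may be listed starting from any index, I would first rotate so that $i=\tau_{1}$; then $j=\tau_{q}$ for some $q\in\{2,\ldots,n\}$. The defining inequalities of $\mathcal{E}_{\tau}(A)$ in (\ref{ineqe}) are equivalent to $\frac{w_{\tau_{m}}}{w_{\tau_{m+1}}}\geq a_{\tau_{m},\tau_{m+1}}$ for every consecutive pair on $\tau$ (with $\tau_{n+1}:=\tau_{1}$). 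Multiplying the consecutive inequalities along the path from $i$ to $j$ gives
\[
\tfrac{w_{i}}{w_{j}}=\prod_{m=1}^{q-1}\tfrac{w_{\tau_{m}}}{w_{\tau_{m+1}}}\geq\prod_{m=1}^{q-1}a_{\tau_{m},\tau_{m+1}}=P_{A,\tau}(i,j),
\]
and strict inequality in the product forces strict inequality in at least one factor: there exists $k\in\{1,\ldots,q-1\}$ with $\frac{w_{\tau_{k}}}{w_{\tau_{k+1}}}>a_{\tau_{k},\tau_{k+1}}$.

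Given this slack, the plan is to define a one-parameter family $w^{\prime}=w^{\prime}(\varepsilon)$ by multiplying $w_{\tau_{k+1}},w_{\tau_{k+2}},\ldots,w_{\tau_{q}}$ by the common factor $1+\varepsilon$ (for small $\varepsilon>0$) and leaving the other components unchanged. Checking the $n$ cycle constraints edge by edge: for $m$ strictly between $k$ and $q$ both $w_{\tau_{m}}$ and $w_{\tau_{m+1}}$ carry the factor $1+\varepsilon$, so that ratio is unchanged; for $m=k$ the ratio shrinks by $\frac{1}{1+\varepsilon}$, which still leaves it $\geq a_{\tau_{k},\tau_{k+1}}$ once $\varepsilon$ is small enough because of the strict slack; for $m=q$ the ratio is scaled up by $1+\varepsilon$, which only helps; and for the remaining edges (those not touching the modified block) both endpoints are unchanged. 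Hence $w^{\prime}\in\mathcal{E}_{\tau}(A)$ for all sufficiently small $\varepsilon>0$, while
\[
\tfrac{w^{\prime}_{i}}{w^{\prime}_{j}}=\tfrac{w_{\tau_{1}}}{(1+\varepsilon)w_{\tau_{q}}}=\tfrac{1}{1+\varepsilon}\cdot\tfrac{w_{i}}{w_{j}}<\tfrac{w_{i}}{w_{j}},
\]
which is the required conclusion, and $w^{\prime}$ is arbitrarily close to $w$ as $\varepsilon\to 0^{+}$.

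I do not anticipate a genuine obstacle: the argument reduces to the elementary fact that a product of ``$\geq$'' inequalities is strict iff at least one factor is, and to the observation that scaling a contiguous block of entries of $w$ by a common factor leaves the interior ratios invariant and affects only the two boundary ratios in a controllable way. The minor bookkeeping point is the cyclic wrap-around when $q=n$, which is already handled by the convention $\tau_{n+1}=\tau_{1}$ in the edge at $m=q$.
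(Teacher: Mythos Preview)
Your proof is correct and follows the same overall strategy as the paper's: start the cycle at $i=\tau_{1}$, localize the strict inequality $\frac{w_{i}}{w_{j}}>P_{A,\tau}(i,j)$ to a single edge of $\tau$ along the path from $i$ to $j$, and then perturb $w$ so as to absorb a bit of that slack while keeping all $n$ cycle constraints. The one difference is the perturbation chosen: the paper decrements the single coordinate $w_{\tau_{s-1}}$ at the slack edge, whereas you multiply the contiguous block $w_{\tau_{k+1}},\ldots,w_{\tau_{q}}$ by $1+\varepsilon$. Your block-scaling is a bit more robust, since it guarantees that $\frac{w_{i}}{w_{j}}=\frac{w_{\tau_{1}}}{w_{\tau_{q}}}$ strictly decreases no matter where along the path the slack edge lies; the paper's single-entry decrement, read literally, only changes $\frac{w_{i}}{w_{j}}$ when the decremented entry happens to be $w_{i}$ itself (i.e.\ $s=2$), so your version makes this step airtight.
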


\begin{proof}
Let $\tau=\tau_{1}\cdots\tau_{n}\tau_{1}$, with $\tau_{1}=i.$ Let $k$ be such
that $\tau_{k}=j.$ Since $\frac{w_{i}}{w_{j}}>P_{A,\tau}(i,j)$, there is
$s\in\{2,\ldots,k\}$ such that
\begin{align*}
w_{\tau_{1}}  &  \geq P_{A,\tau}(\tau_{1},\tau_{2})w_{\tau_{2}}\geq\cdots\geq
P_{A,\tau}(\tau_{1},\tau_{s-1})w_{\tau_{s-1}}\\
&  >P_{A,\tau}(\tau_{1},\tau_{s})w_{\tau_{s}}\geq\cdots\geq P_{A,\tau}%
(\tau_{1},\tau_{k})w_{\tau_{k}}\geq\cdots\\
&  \geq P_{A,\tau}(\tau_{1},\tau_{n})w_{\tau_{n}}\geq\tau(A)w_{\tau_{1}}.
\end{align*}
The vector $w^{\prime}$ obtained from $w$ by a small decrement of
$w_{\tau_{s-1}}$ so that $w_{\tau_{s-1}}^{\prime}>P_{A,\tau}(\tau_{s-1}%
,\tau_{s})w_{\tau_{s}}$ verifies the claim.
\end{proof}

\begin{lemma}
\label{lnotea}Let $A\in\mathcal{PC}_{n}^{0},$ $\tau\in\Gamma(A)$ and
$w\in\mathcal{E}_{\tau}(A)$. Let $k\in N$ and $(i,j)\in S_{k}^{(\tau)}.$ If
$P_{A,\tau}[S_{k}^{(\tau)}]=W[S_{k}^{(\tau)}]$ and $P_{A,\tau}[S_{k}^{(\tau
)}]$ is $(A,S_{k}^{(\tau)})$-undominated, then there is a neighborhood
$\mathcal{V}$ of $w$ such that, for any $w^{\prime}\in\mathcal{V}$ with
$\frac{w_{i}^{\prime}}{w_{j}^{\prime}}<\frac{w_{i}}{w_{j}}$, we have
$w^{\prime}\notin\mathcal{E}(A).$
\end{lemma}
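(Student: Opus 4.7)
The plan is to prove that $w'\notin\mathcal{E}_{\tau'}(A)$ for every $\tau'\in\Gamma(A)$ and then invoke Theorem \ref{tt012}. The set $\mathcal{V}$ will be obtained by intersecting finitely many open neighborhoods of $w$, one for each H-cycle in $\Gamma(A)\setminus\{\tau\}$, so its construction should be straightforward once we handle each $\tau'$ separately.

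First I would dispose of the case $\tau'=\tau$: since $(i,j)\in S_{k}^{(\tau)}$ and $W[S_{k}^{(\tau)}]=P_{A,\tau}[S_{k}^{(\tau)}]$, we have $\frac{w_i}{w_j}=P_{A,\tau}(i,j)$. Consequently any $w'$ with $\frac{w_{i}^{\prime}}{w_{j}^{\prime}}<\frac{w_{i}}{w_{j}}$ already satisfies $\frac{w_{i}^{\prime}}{w_{j}^{\prime}}<P_{A,\tau}(i,j)$, so the lower bound in (\ref{intt01}) fails and Theorem \ref{tt01} gives $w'\notin\mathcal{E}_{\tau}(A)$. This part requires no neighborhood argument.

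Next I would handle each $\nu\in\Gamma(A)\setminus\{\tau\}$. The key point is that the $(A,S_{k}^{(\tau)})$-undominated hypothesis on $P_{A,\tau}[S_{k}^{(\tau)}]$ delivers, for each such $\nu$, a position $(p_{\nu},q_{\nu})\in S_{k}^{(\tau)}$ at which $P_{A,\nu}(p_{\nu},q_{\nu})>P_{A,\tau}(p_{\nu},q_{\nu})=\frac{w_{p_{\nu}}}{w_{q_{\nu}}}$. Thus the inequality $\frac{w_{p_{\nu}}}{w_{q_{\nu}}}<P_{A,\nu}(p_{\nu},q_{\nu})$ holds strictly at $w$. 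By continuity of the map $w\mapsto w_{p_{\nu}}/w_{q_{\nu}}$ on $\mathbb{R}_{+}^{n}$, there is an open neighborhood $\mathcal{V}_{\nu}$ of $w$ on which this strict inequality persists, and for every $w'\in\mathcal{V}_{\nu}$ the lower bound $P_{A,\nu}\leq W'$ fails at $(p_{\nu},q_{\nu})$; hence $w'\notin\mathcal{E}_{\nu}(A)$ by Theorem \ref{tt01}.

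Finally, I would take $\mathcal{V}=\bigcap_{\nu\in\Gamma(A)\setminus\{\tau\}}\mathcal{V}_{\nu}$, which is still an open neighborhood of $w$ since $\Gamma(A)$ is finite. Any $w'\in\mathcal{V}$ with $\frac{w_{i}^{\prime}}{w_{j}^{\prime}}<\frac{w_{i}}{w_{j}}$ then lies outside $\mathcal{E}_{\tau'}(A)$ for every $\tau'\in\Gamma(A)$, so Theorem \ref{tt012} yields $w'\notin\mathcal{E}(A)$, as required. The proof is essentially an assembly of existing tools — the only thing that could go wrong is if the undominatedness hypothesis allowed $P_{A,\nu}[S_{k}^{(\tau)}]=P_{A,\tau}[S_{k}^{(\tau)}]$ for some $\nu\neq\tau$, but Theorem \ref{thsamecycle} rules this out, confirming that the strict inequality needed for the continuity step is indeed available.
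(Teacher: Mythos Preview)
Your proof is correct and follows essentially the same approach as the paper: handle $\tau$ itself via the equality $\frac{w_i}{w_j}=P_{A,\tau}(i,j)$, use the undominatedness hypothesis to find for each $\nu\in\Gamma(A)\setminus\{\tau\}$ a position in $S_{k}^{(\tau)}$ where $P_{A,\nu}$ strictly exceeds $\frac{w_p}{w_q}$, and then pass to a uniform neighborhood using finiteness of $\Gamma(A)$. The paper packages the last step into a single explicit $\delta$, whereas you intersect finitely many open sets obtained by continuity; these are equivalent formulations of the same argument. (Your final remark invoking Theorem \ref{thsamecycle} is harmless but unnecessary: equality $P_{A,\nu}[S_{k}^{(\tau)}]=P_{A,\tau}[S_{k}^{(\tau)}]$ is itself an instance of dominance and is already excluded by the undominatedness hypothesis.)
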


\begin{proof}
If $\#\Gamma(A)=1,$ the result follows from Theorem \ref{tt012}. Suppose that
$\#\Gamma(A)>1.$ Let%
\[
\delta=\min_{\rho\in\Gamma(A)\backslash\{\tau\}}\max_{(p,q)\in S_{k}^{(\tau)}%
}\left(  P_{A,\rho}(p,q)-P_{A,\tau}(p,q)\right)  .
\]
Note that $\delta>0,$ since $P_{A,\tau}[S_{k}^{(\tau)}]$ is $(A,S_{k}^{(\tau
)})$-undominated. Any $w^{\prime}$ such that $\frac{w_{i}^{\prime}}%
{w_{j}^{\prime}}<P_{A,\tau}(i,j)$ and $\left\vert \frac{w_{p}^{\prime}}%
{w_{q}^{\prime}}-\frac{w_{p}}{w_{q}}\right\vert <\delta$ for all $(p,q)\in
S_{k}^{(\tau)}$ verifies the claim, as there is no $\nu\in\Gamma(A)$ such that
$w^{\prime}\in\mathcal{E}_{\nu}(A)$.
\end{proof}

\begin{theorem}
\label{t4}Let $A,B\in\mathcal{PC}_{n}^{0}$ and suppose that $\mathcal{E}%
(A)=\mathcal{E}(B).$ Let $\tau\in\Gamma(A)$ and $k\in N.$ If $P_{A,\tau}%
[S_{k}^{(\tau)}]$ is $(A,S_{k}^{(\tau)})$-undominated, then $\tau\in\Gamma(B)$
and $P_{B,\tau}[S_{k}^{(\tau)}]=P_{A,\tau}[S_{k}^{(\tau)}]$.
\end{theorem}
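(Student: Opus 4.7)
The plan is to exhibit a specific extreme vector $w$ of $\mathcal{E}_\tau(A)$ on which $W$ meets the lower bound $P_{A,\tau}$ throughout $S_k^{(\tau)}$, transfer it to $\mathcal{E}(B)$ via the hypothesis $\mathcal{E}(A)=\mathcal{E}(B)$ to capture some $\nu\in\Gamma(B)$ with $w\in\mathcal{E}_\nu(B)$, and then use the $(A,S_k^{(\tau)})$-undominance of $P_{A,\tau}[S_k^{(\tau)}]$ to force $P_{B,\nu}$ to coincide with $P_{A,\tau}$ on $S_k^{(\tau)}$. Once that equality is in hand, Theorem \ref{thsamecycle} immediately yields $\nu=\tau$, giving both conclusions at once.

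Concretely, I would first invoke Theorem \ref{le2} to produce $w\in\mathcal{E}_\tau(A)$ with $W[S_k^{(\tau)}]=P_{A,\tau}[S_k^{(\tau)}]$. Since $w\in\mathcal{E}(A)=\mathcal{E}(B)$, Theorem \ref{TeEaconvexunion} furnishes a $\nu\in\Gamma(B)$ with $w\in\mathcal{E}_\nu(B)$, and Theorem \ref{tt01} then yields $P_{B,\nu}\leq W$ entry-wise, so in particular $P_{B,\nu}[S_k^{(\tau)}]\leq P_{A,\tau}[S_k^{(\tau)}]$.

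The crucial step is to rule out strict inequality. Suppose, for contradiction, that $P_{B,\nu}(i,j)<P_{A,\tau}(i,j)=w_i/w_j$ for some $(i,j)\in S_k^{(\tau)}$. Applying Lemma \ref{ldecrea} to $B$ and $\nu$ (using $w_i/w_j>P_{B,\nu}(i,j)$) produces an arbitrarily small perturbation $w'\in\mathcal{E}_\nu(B)\subseteq\mathcal{E}(B)$ with $w'_i/w'_j<w_i/w_j$. On the other hand, Lemma \ref{lnotea} applied to $A$ and $\tau$ — and this is precisely where the $(A,S_k^{(\tau)})$-undominance hypothesis is indispensable — gives a neighborhood $\mathcal{V}$ of $w$ such that every $w'\in\mathcal{V}$ with $w'_i/w'_j<w_i/w_j$ lies outside $\mathcal{E}(A)$. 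Choosing the perturbation small enough to sit inside $\mathcal{V}$ yields a vector in $\mathcal{E}(B)\setminus\mathcal{E}(A)$, contradicting $\mathcal{E}(A)=\mathcal{E}(B)$. Thus $P_{B,\nu}[S_k^{(\tau)}]=P_{A,\tau}[S_k^{(\tau)}]$, and Theorem \ref{thsamecycle} then forces $\nu=\tau$.

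The main obstacle is the compatibility in the previous paragraph: Lemma \ref{ldecrea} delivers a perturbation staying inside $\mathcal{E}_\nu(B)$, while Lemma \ref{lnotea} only controls a neighborhood of $w$. The argument works because Lemma \ref{ldecrea} allows the perturbation to be made arbitrarily small, so it can be placed inside the neighborhood supplied by Lemma \ref{lnotea}; once this is verified the contradiction is immediate. The remaining pieces of the argument are routine appeals to Theorems \ref{le2}, \ref{TeEaconvexunion}, \ref{tt01} and \ref{thsamecycle}.
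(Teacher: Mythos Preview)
Your proposal is correct and follows essentially the same approach as the paper's own proof: produce the extreme vector $w$ via Theorem~\ref{le2}, transfer it to some $\mathcal{E}_\nu(B)$, obtain $P_{B,\nu}[S_k^{(\tau)}]\leq P_{A,\tau}[S_k^{(\tau)}]$ from Theorem~\ref{tt01}, and then combine Lemma~\ref{ldecrea} (applied to $B,\nu$) with Lemma~\ref{lnotea} (applied to $A,\tau$, using the undominance hypothesis) to rule out strict inequality, after which Theorem~\ref{thsamecycle} forces $\nu=\tau$. Your explicit attention to the compatibility between the two lemmas --- that the perturbation from Lemma~\ref{ldecrea} can be made small enough to lie in the neighborhood from Lemma~\ref{lnotea} --- is exactly the point that makes the contradiction go through.
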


\begin{proof}
By Theorem \ref{le2}, there is $w\in\mathcal{E}_{\tau}(A)$ such that
$P_{A,\tau}[S_{k}^{(\tau)}]=W[S_{k}^{(\tau)}].$ Since $\mathcal{E}%
(A)=\mathcal{E}(B),$ we have $w\in\mathcal{E}(B).$ By Theorem
\ref{TeEaconvexunion}, there is $\nu\in\Gamma(B)$ such that $w\in
\mathcal{E}_{\nu}(B)$. By Theorem \ref{tt01}, $P_{B,\nu}[S_{k}^{(\tau)}]\leq
W[S_{k}^{(\tau)}]=P_{A,\tau}[S_{k}^{(\tau)}].$ Suppose that $P_{B,\nu}%
[S_{k}^{(\tau)}]$ strictly dominates $P_{A,\tau}[S_{k}^{(\tau)}].$ Then, there
is $(i,j)\in S_{k}^{(\tau)}$ such that $P_{B,\nu}(i,j)<P_{A,\tau}%
(i,j)=\frac{w_{i}}{w_{j}}.$ By Lemma \ref{ldecrea}, there is a vector
$w^{\prime}\in\mathcal{E}_{\nu}(B),$ arbitrarily close to $w,$ such that
$\frac{w_{i}^{\prime}}{w_{j}^{\prime}}<\frac{w_{i}}{w_{j}}.$ Since $P_{A,\tau
}[S_{k}^{(\tau)}]$ is $(A,S_{k}^{(\tau)})$-undominated, by Lemma \ref{lnotea},
$w^{\prime}\notin\mathcal{E}(A)$, a contradiction since $\mathcal{E}%
(A)=\mathcal{E}(B).$ Thus, $P_{B,\nu}[S_{k}^{(\tau)}]=P_{A,\tau}[S_{k}%
^{(\tau)}].$ By Theorem \ref{thsamecycle}, $\nu=\tau$. Thus the claim follows.
\end{proof}

Recall that $P_{B,\tau}[S_{k}^{(\tau)}]=P_{A,\tau}[S_{k}^{(\tau)}]$ does not
imply $P_{B,\tau}=P_{A,\tau},$ unless $\tau(A)=\tau(B).$

\begin{corollary}
\label{lauxx}Let $A,B\in\mathcal{PC}_{n}^{0}$. Suppose that $\ $there are
$\tau\in\Gamma(A)$ and $k_{1},k_{2}\in N$, $k_{1}\neq k_{2}$, such that
$P_{A,\tau}[S_{k_{1}}^{(\tau)}]$ is $(A,S_{k_{1}}^{(\tau)})$-undominated and
$P_{A,\tau}[S_{k_{2}}^{(\tau)}]$ is $(A,S_{k_{2}}^{(\tau)})$-undominated. If
$\mathcal{E}(A)=\mathcal{E}(B),$ then $\tau\in\Gamma(B)$ and $P_{A,\tau
}=P_{B,\tau}.$ Thus, the sequence of entries in $A$ and in $B$ along $\tau$
coincide, that is, $\mathcal{E}_{\tau}(A)=\mathcal{E}_{\tau}(B).$
\end{corollary}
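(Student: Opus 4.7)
The plan is to apply Theorem \ref{t4} twice---once with $k=k_{1}$ and once with $k=k_{2}$---and then combine the two partial equalities by means of Remark \ref{remcompl} to obtain the full identity $P_{A,\tau}=P_{B,\tau}$.

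First, since $\mathcal{E}(A)=\mathcal{E}(B)$ and $P_{A,\tau}[S_{k_{1}}^{(\tau)}]$ is $(A,S_{k_{1}}^{(\tau)})$-undominated, Theorem \ref{t4} yields $\tau\in\Gamma(B)$ together with $P_{B,\tau}[S_{k_{1}}^{(\tau)}]=P_{A,\tau}[S_{k_{1}}^{(\tau)}]$. Applying the same theorem with $k_{2}$ in place of $k_{1}$ gives $P_{B,\tau}[S_{k_{2}}^{(\tau)}]=P_{A,\tau}[S_{k_{2}}^{(\tau)}]$. Letting $T=S_{k_{1}}^{(\tau)}\cup S_{k_{2}}^{(\tau)}$, we conclude that $P_{A,\tau}$ and $P_{B,\tau}$ agree on $T$.

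The key point is that, because $k_{1}\neq k_{2}$, Remark \ref{rrow} forces $T$ to contain a pair of symmetrically located positions: indeed $(k_{1},k_{2})\in S_{k_{1}}^{(\tau)}$ and $(k_{2},k_{1})\in S_{k_{2}}^{(\tau)}$, so both lie in $T$. By Remark \ref{remcompl}, the values of $P_{A,\tau}$ on $T$ then determine $P_{A,\tau}(k_{1},k_{2})P_{A,\tau}(k_{2},k_{1})=\tau(A)$, and together with $\tau(A)$ they determine the whole matrix $P_{A,\tau}$; the same holds for $B$. Since the two restrictions to $T$ coincide, we first obtain $\tau(A)=\tau(B)$ and then $P_{A,\tau}=P_{B,\tau}$.

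Once this equality is in hand, the remaining conclusions are immediate: for $i=1,\ldots,n-1$ we have $a_{\tau_{i},\tau_{i+1}}=P_{A,\tau}(\tau_{i},\tau_{i+1})=P_{B,\tau}(\tau_{i},\tau_{i+1})=b_{\tau_{i},\tau_{i+1}}$, and likewise $a_{\tau_{n},\tau_{1}}=b_{\tau_{n},\tau_{1}}$, so the sequences of entries in $A$ and $B$ along $\tau$ coincide; Corollary \ref{cPeq} (equivalently, the definition of $\mathcal{E}_{\tau}$) then gives $\mathcal{E}_{\tau}(A)=\mathcal{E}_{\tau}(B)$. The only step that deserves any attention is the realization that two \emph{distinct} base indices $k_{1},k_{2}$ are exactly what is needed to pull a symmetric pair into $T$, so that Remark \ref{remcompl} can reconstruct the full path matrix from its restriction to $T$; with a single $k$, agreement on $S_{k}^{(\tau)}$ would leave $\tau(A)$ undetermined and the conclusion $P_{A,\tau}=P_{B,\tau}$ unreachable.
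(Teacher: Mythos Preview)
Your proof is correct and follows essentially the same approach as the paper: apply Theorem \ref{t4} with each of $k_{1}$ and $k_{2}$, then invoke Remark \ref{remcompl} to pass from agreement on $S_{k_{1}}^{(\tau)}\cup S_{k_{2}}^{(\tau)}$ to $P_{A,\tau}=P_{B,\tau}$, and finally conclude that the entries along $\tau$ coincide (the paper cites Theorem \ref{thequalpath} here, while you cite Corollary \ref{cPeq}, which is equivalent for this purpose). Your additional explanation of why $T$ contains a symmetric pair is a helpful unpacking of what Remark \ref{remcompl} already asserts.
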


\begin{proof}
By Theorem \ref{t4}, if $\mathcal{E}(A)=\mathcal{E}(B)$, then $\tau\in
\Gamma(B)$ and $P_{B,\tau}[S_{k_{1}}^{(\tau)}]=P_{A,\tau}[S_{k_{1}}^{(\tau)}]$
and $P_{B,\tau}[S_{k_{2}}^{(\tau)}]=P_{A,\tau}[S_{k_{2}}^{(\tau)}]$. By Remark
\ref{remcompl}, $P_{B,\tau}=P_{A,\tau}.$ By Theorem \ref{thequalpath}, the
entries in $A$ and $B$ along $\tau$ coincide.
\end{proof}

\bigskip

\begin{remark}
Theorem \ref{t4} says that, if $\mathcal{E}(A)=\mathcal{E}(B),$ and $w$ is an
extreme of $\mathcal{E}(A)$ lying in a subset $\mathcal{E}_{\tau}(A)$, with
$\tau\in\Gamma(A),$ and satisfying the undominance condition, then $\tau
\in\Gamma(B)$ and $w$, which is also an extreme of $\mathcal{E}(B)$, lies in
$\mathcal{E}_{\tau}(B)$. By Corollary \ref{lauxx}, if there are two such
extremes of $\mathcal{E}(A)$ lying in a subset $\mathcal{E}_{\tau}(A)$, then
$\tau\in\Gamma(B)$ and $\mathcal{E}_{\tau}(B)=\mathcal{E}_{\tau}(A)$.
\end{remark}

\bigskip

We now can give a condition under which $\mathcal{E}(A)=\mathcal{E}(B)$
implies $A=B$. \bigskip

\begin{corollary}
Let $A,B\in\mathcal{PC}_{n}^{0}$. Suppose that the entries of a matrix in
$\mathcal{PC}_{n}$ along the H-cycles in $\Gamma(A)$ determine the matrix
(this happens if $\#\Gamma(A)=\frac{(n-1)!}{2}$). Suppose that for each
$\tau\in\Gamma(A)$ there are $k_{1},k_{2}\in N$, $k_{1}\neq k_{2}$, such that
$P_{A,\tau}[S_{k_{1}}^{(\tau)}]$ is $(A,S_{k_{1}}^{(\tau)})$-undominated and
$P_{A,\tau}[S_{k_{2}}^{(\tau)}]$ is $(A,S_{k_{2}}^{(\tau)})$-undominated. If
$\mathcal{E}(A)=\mathcal{E}(B),$ then $A=B.$
\end{corollary}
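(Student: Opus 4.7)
The plan is to derive $A=B$ in essentially one step by applying Corollary~\ref{lauxx} to every H-cycle in $\Gamma(A)$. First I would fix an arbitrary $\tau\in\Gamma(A)$. By hypothesis there exist distinct $k_{1},k_{2}\in N$ such that both $P_{A,\tau}[S_{k_{1}}^{(\tau)}]$ and $P_{A,\tau}[S_{k_{2}}^{(\tau)}]$ are undominated. Since $\mathcal{E}(A)=\mathcal{E}(B)$, Corollary~\ref{lauxx} then yields $\tau\in\Gamma(B)$ and, moreover, that the sequence of entries of $A$ and $B$ along $\tau$ coincide (equivalently $P_{A,\tau}=P_{B,\tau}$ and $\mathcal{E}_{\tau}(A)=\mathcal{E}_{\tau}(B)$).

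Next I would observe that this conclusion is valid for every $\tau\in\Gamma(A)$. Thus $A$ and $B$ agree in every entry that lies on some H-cycle in $\Gamma(A)$, i.e., $A[S]=B[S]$ for the set $S\subseteq\mathcal{N}$ consisting of all pairs $(i,j)$ appearing consecutively in some $\tau\in\Gamma(A)$ (together with their reciprocal partners, since both $A$ and $B$ are reciprocal).

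Finally, I would invoke the standing assumption that the entries of a reciprocal matrix along the H-cycles in $\Gamma(A)$ determine the matrix uniquely. Combined with the previous paragraph, this immediately gives $A=B$. The parenthetical remark that this determination hypothesis holds when $\#\Gamma(A)=\tfrac{(n-1)!}{2}$ is simply a sufficient condition (in that case every off-diagonal pair of positions appears on some H-cycle in $\Gamma(A)$), so no separate argument is needed.

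There is essentially no obstacle here: the proof is a direct two-line consequence of Corollary~\ref{lauxx} and the determination hypothesis. The only point requiring a moment of care is to note explicitly that the conclusion of Corollary~\ref{lauxx} gives equality of the entries of $A$ and $B$ along $\tau$ (not merely that $\tau\in\Gamma(B)$), which is exactly what is needed to feed into the determination hypothesis.
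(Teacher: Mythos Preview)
Your proposal is correct and follows essentially the same approach as the paper: apply Corollary~\ref{lauxx} to each $\tau\in\Gamma(A)$ to obtain that $\tau\in\Gamma(B)$ and that the entries of $A$ and $B$ along $\tau$ coincide, then invoke the determination hypothesis to conclude $A=B$. Your write-up is slightly more verbose, but the logical structure is identical.
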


\begin{proof}
If $\mathcal{E}(A)=\mathcal{E}(B),$ by Corollary \ref{lauxx}, for each
$\tau\in\Gamma(A)$, we have that $\tau\in\Gamma(B)$ and the sequence of
entries in $A$ and in $B$ along $\tau$ coincide. Since, by hypothesis, the
entries of $A$ and $B$ along the H-cycles in $\Gamma(A)\subseteq\Gamma(B)$
determine $A$ and $B$, respectively, and those entries coincide in both
matrices, it follows that $A=B$.
\end{proof}

\section{$\mathcal{E}(A)=\mathcal{E}(B)$ implies $A=B$: the case
$n=4$\label{sn4}}

Here, we show that $\mathcal{E}(A)=\mathcal{E}(B)$ implies $A=B$, when
$A,B\in\mathcal{PC}_{4}^{0}.$ In this case, there are at most $3$ H-cycles in
$\Gamma(A)$.

\bigskip

We first show that, if $A,B\in\mathcal{PC}_{4}^{0}$ and $\mathcal{E}%
(A)=\mathcal{E}(B),$ then the minimum cycle products in $A$ and $B$ are the
same. Given $A\in\mathcal{PC}_{4}^{0}$, we denote by $\theta_{A}$ an H-cycle
$\tau$ such that $\tau(A)=\min_{\rho\in\Gamma(A)}\rho(A).$ The H-cycle
$\theta_{A}$ may not be uniquely determined, but the numerical value
$\theta_{A}(A)=\tau(A)$ is uniquely determined (is the minimum cycle product
in $A$).

\begin{theorem}
\label{cequalprod}Let $A,B\in\mathcal{PC}_{4}^{0}.$ If $\mathcal{E}%
(A)=\mathcal{E}(B)$ then $\theta_{A}(A)=\theta_{B}(B).$
\end{theorem}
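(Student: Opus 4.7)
The plan is to show $\theta_B(B)\le\theta_A(A)$ and then invoke symmetry: the hypothesis $\mathcal{E}(A)=\mathcal{E}(B)$ is symmetric, and in particular gives both $\mathcal{E}(A)\subseteq\mathcal{E}(B)$ and $\mathcal{E}(B)\subseteq\mathcal{E}(A)$, which is what is required to apply Theorem \ref{lemsamerow} in either direction.

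I would begin by choosing an $\tau\in\Gamma(A)$ with $\tau(A)=\theta_A(A)$. Applying Theorem \ref{lemsamerow} once for each $k\in N=\{1,2,3,4\}$ produces a map $k\mapsto\nu(k)\in\Gamma(B)$ such that $P_{B,\nu(k)}[S_k^{(\tau)}]\le P_{A,\tau}[S_k^{(\tau)}]$. The key observation is then a pigeonhole: since $n=4$, $|\Gamma(B)|\le\tfrac{(n-1)!}{2}=3$, while the domain of the map has $4$ elements. Hence there exist distinct indices $k\ne k'$ sharing a common image $\nu(k)=\nu(k')=:\nu\in\Gamma(B)$.

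Next I would use Proposition \ref{rsdist} to conclude $S_k^{(\tau)}\ne S_{k'}^{(\tau)}$. Combined with Remark \ref{rrow}, which says each of these $\tfrac{n^2-n}{2}$-element sets contains exactly one of $(p,q)$ or $(q,p)$ for every pair of distinct indices, this forces the existence of indices $i,j$ with $(i,j)\in S_k^{(\tau)}$ and $(j,i)\in S_{k'}^{(\tau)}$. The two Theorem \ref{lemsamerow} inequalities, both applied at $\nu$, therefore yield simultaneously
\[
P_{B,\nu}(i,j)\le P_{A,\tau}(i,j)\quad\text{and}\quad P_{B,\nu}(j,i)\le P_{A,\tau}(j,i).
\]
Remark \ref{tequalprod} immediately gives $\nu(B)\le\tau(A)=\theta_A(A)$, whence $\theta_B(B)\le\nu(B)\le\theta_A(A)$. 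Swapping the roles of $A$ and $B$ in the same argument gives $\theta_A(A)\le\theta_B(B)$, and the theorem follows.

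I do not anticipate any real obstacle: once the tools of the preceding sections are in place, the proof is essentially a counting remark combined with Theorem \ref{lemsamerow}, Proposition \ref{rsdist}, and Remark \ref{tequalprod}. The only delicate feature, and what pins the argument to $n=4$, is the numerical inequality $\tfrac{(n-1)!}{2}<n$, which holds only for $n\le 4$; for larger $n$ the pigeonhole step collapses and a genuinely new idea would be needed.
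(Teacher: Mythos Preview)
Your proof is correct and follows essentially the same approach as the paper: apply Theorem~\ref{lemsamerow} at the minimum-cycle $\tau=\theta_A$, use the pigeonhole bound $\#\Gamma(B)\le 3<4$ to find a common $\nu\in\Gamma(B)$ dominating on two distinct sets $S_{k}^{(\tau)},S_{k'}^{(\tau)}$, and then invoke Remark~\ref{tequalprod} to compare cycle products. The only cosmetic difference is that the paper assumes $\theta_A(A)\le\theta_B(B)$ without loss of generality at the outset instead of arguing both inequalities separately, and it leaves the existence of the symmetric pair $(i,j),(j,i)$ implicit rather than citing Proposition~\ref{rsdist} and Remark~\ref{rrow} as you do.
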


\begin{proof}
Without loss of generality, suppose that $\theta_{A}(A)\leq\theta_{B}(B).$ By
Theorem \ref{lemsamerow}, and because $\#\Gamma(B)<4$, there are $k_{1}%
,k_{2}\in\{1,2,3,4\},$ $k_{1}\neq k_{2},$ and $\rho\in\Gamma(B)$ such that
$P_{B,\rho}[S_{k_{1}}^{(\theta_{A})}\cup S_{k_{2}}^{(\theta_{A})}]\leq
P_{A,\theta_{A}}[S_{k_{1}}^{(\theta_{A})}\cup S_{k_{2}}^{(\theta_{A})}]$. By
Remark \ref{tequalprod}, $\rho(B)\leq\theta_{A}(A)$. Since $\theta_{B}%
(B)\leq\rho(B)$ and, by assumption, $\theta_{A}(A)\leq\theta_{B}(B)$, we get
$\theta_{B}(B)=\theta_{A}(A).$
\end{proof}

\bigskip

To prove that $\mathcal{E}(A)=\mathcal{E}(B)$ implies $A=B$, first we show
that, if $\mathcal{E}(A)=\mathcal{E}(B),$ then there is a common H-cycle in
$A$ and $B$ with the same entries along it in both $A$ and $B$ and with
minimal cycle product (which is equal in $A$ and $B$ by Theorem
\ref{cequalprod}). Based on this, we prove the result assuming that $A$ and
$B$ are in a special form attained by monomial similarity.

\bigskip

\textbf{Proof of the existence of a common H-cycle with minimum product}

\bigskip

We consider the case in which the minimum cycle product in $A$ occurs just for
one H-cycle in $\Gamma(A)$, and the case in which it occurs for more than one
H-cycle in $\Gamma(A)$. The following lemma states that, in the former case,
there are two extremes of the set $\mathcal{E}_{\tau}(A)$ associated with the
minimum cycle product that are not in the other sets $\mathcal{E}_{\nu}(A)$
(so they are extremes of $\mathcal{E}(A)$).

\begin{lemma}
\label{t3}Let $A\in\mathcal{PC}_{4}^{0}$ with $\#\Gamma(A)>1$. Suppose that
$\theta_{A}(A)<\tau(A)$ for any $\tau\in\Gamma(A),$ $\tau\neq\theta_{A}.$
Then, there are $k_{1},k_{2}\in\{1,2,3,4\}$, $k_{1}\neq k_{2},$ such that
$P_{A,\theta_{A}}[S_{k_{1}}^{(\theta_{A})}]$ is $(A,S_{k_{1}}^{(\theta_{A})}%
)$-undominated and $P_{A,\theta_{A}}[S_{k_{2}}^{(\theta_{A})}]$ is
$(A,S_{k_{2}}^{(\theta_{A})})$-undominated.
\end{lemma}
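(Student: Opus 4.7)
The approach is a simple counting argument by contradiction. Suppose that at most one index $k \in \{1,2,3,4\}$ yields an undominated $P_{A,\theta_{A}}[S_{k}^{(\theta_{A})}]$. Then at least three distinct indices $k_{1},k_{2},k_{3}\in\{1,2,3,4\}$ admit H-cycles $\nu_{1},\nu_{2},\nu_{3}\in\Gamma(A)\setminus\{\theta_{A}\}$ with $P_{A,\nu_{i}}[S_{k_{i}}^{(\theta_{A})}]\leq P_{A,\theta_{A}}[S_{k_{i}}^{(\theta_{A})}]$ entry-wise.

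The crux is to force the $\nu_{i}$'s to be pairwise distinct. By Remark \ref{remcompl} (or, more directly, since $S_{k_{i}}^{(\theta_{A})}\neq S_{k_{j}}^{(\theta_{A})}$ for $k_{i}\neq k_{j}$ by Proposition \ref{rsdist}, and each set contains exactly one representative of every symmetric pair by Remark \ref{rrow}, they must disagree on at least one unordered pair), for any $i\neq j$ the union $S_{k_{i}}^{(\theta_{A})}\cup S_{k_{j}}^{(\theta_{A})}$ contains a pair of symmetrically located entries $(p,q),(q,p)$. Consequently, if $\nu_{i}=\nu_{j}=:\nu$, then combining the domination inequality at $(p,q)$ with the one at $(q,p)$ yields $P_{A,\nu}(p,q)\leq P_{A,\theta_{A}}(p,q)$ and $P_{A,\nu}(q,p)\leq P_{A,\theta_{A}}(q,p)$; by Remark \ref{tequalprod} this gives $\nu(A)\leq\theta_{A}(A)$, contradicting the strict minimality hypothesis $\theta_{A}(A)<\nu(A)$ valid for $\nu\in\Gamma(A)\setminus\{\theta_{A}\}$.

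Hence $\nu_{1},\nu_{2},\nu_{3}$ are three pairwise distinct elements of $\Gamma(A)\setminus\{\theta_{A}\}$. But for $n=4$ we have $\#\Gamma(A)\leq\frac{(n-1)!}{2}=3$, so $\#(\Gamma(A)\setminus\{\theta_{A}\})\leq 2$, a contradiction. Therefore at most two of the four sets $S_{k}^{(\theta_{A})}$ can be dominated, leaving at least two undominated indices, which we may take as the required $k_{1}\neq k_{2}$.

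The only subtlety is justifying the presence of a symmetric pair in $S_{k_{i}}^{(\theta_{A})}\cup S_{k_{j}}^{(\theta_{A})}$, but this is already packaged in Remark \ref{remcompl} (used there for a determinacy assertion), so no calculation on the actual entries of $P_{A,\theta_{A}}$ is required; the strict inequality in the hypothesis is what drives the contradiction via Remark \ref{tequalprod}.
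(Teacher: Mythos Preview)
Your proof is correct and follows essentially the same approach as the paper's own proof. Both argue by contradiction that if at least three of the four indices $k$ admit a dominating $\nu\in\Gamma(A)\setminus\{\theta_{A}\}$, then (by pigeonhole, since $\#(\Gamma(A)\setminus\{\theta_{A}\})\leq 2$) some $\nu$ dominates on two of the sets $S_{k}^{(\theta_{A})}$ simultaneously, whence Remark~\ref{tequalprod} forces $\nu(A)\leq\theta_{A}(A)$, contradicting strict minimality; you simply phrase the pigeonhole step contrapositively and spell out more explicitly why the union of two distinct $S_{k}^{(\theta_{A})}$'s contains a symmetric pair.
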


\begin{proof}
Suppose that there are at least $3$ $k$'s for which there is $\tau\in
\Gamma(A)$, $\tau\neq\theta_{A}$, such that $P_{A,\tau}[S_{k}^{(\theta_{A})}]$
dominates $P_{A,\theta_{A}}[S_{k}^{(\theta_{A})}]$ (the dominance is strict by
Theorem \ref{thsamecycle})$.$ Then, there are $k_{1},k_{2}\in\{1,2,3,4\},$
$k_{1}\neq k_{2},$ and $\nu\in\Gamma(A),$ $\nu\neq\theta_{A},$ such that
$P_{A,\nu}[S_{k_{1}}^{(\theta_{A})}\cup S_{k_{2}}^{(\theta_{A})}]$ dominates
$P_{A,\theta_{A}}[S_{k_{1}}^{(\theta_{A})}\cup S_{k_{2}}^{(\theta_{A})}]$. By
Remark \ref{tequalprod}, $\nu(A)\leq\theta_{A}(A)$. Then, $\nu(A)=\theta
_{A}(A)$ and, because of the hypothesis, $\nu=\theta_{A},$ a contradiction.
\end{proof}

\bigskip

When the minimum cycle product occurs for at least two H-cycles in
$\Gamma(A),$ then the conclusion in Lemma \ref{t3} may not hold.

\begin{example}
Let
\[
A=\left[
\begin{array}
[c]{cccc}%
1 & 1 & 1 & a\\
1 & 1 & 1 & 1\\
1 & 1 & 1 & 1\\
\frac{1}{a} & 1 & 1 & 1
\end{array}
\right]  ,
\]
$a>1.$ We have $\Gamma(A)=\{\alpha,\gamma\},$ with $\alpha=12341$ and
$\gamma=13241.$ Moreover, $\alpha(A)=\gamma(A)=\frac{1}{a}$. We have (see
Example \ref{ex4by4})
\[
P_{A,\alpha}=\left[
\begin{array}
[c]{cccc}%
1 & 1 & 1 & 1\\
\frac{1}{a} & 1 & 1 & 1\\
\frac{1}{a} & \frac{1}{a} & 1 & 1\\
\frac{1}{a} & \frac{1}{a} & \frac{1}{a} & 1
\end{array}
\right]  \text{ and }P_{A,\gamma}=\left[
\begin{array}
[c]{cccc}%
1 & 1 & 1 & 1\\
\frac{1}{a} & 1 & \frac{1}{a} & 1\\
\frac{1}{a} & 1 & 1 & 1\\
\frac{1}{a} & \frac{1}{a} & \frac{1}{a} & 1
\end{array}
\right]  .
\]
The matrix $P_{A,\alpha}[S_{k_{1}}^{(\alpha)}]$ is $(A,S_{k_{1}}^{(\alpha)}%
)$-undominated if and only if $k_{1}=3$; the matrix $P_{A,\gamma}[S_{k_{2}%
}^{(\gamma)}]$ is $(A,S_{k_{2}}^{(\gamma)})$-undominated if and only if
$k_{2}=2$.

The extremes of $\mathcal{E}_{\alpha}(A)$ are (projectively) $(1,1,1,1),$
$(a,a,a,1),$ $(a,1,1,1),$ and $(a,a,1,1)$, associated with $S_{k}^{(\alpha)}$
for $k=1$, $4$, $2$ and $3$, respectively$.$ The extremes of $\mathcal{E}%
_{\gamma}(A)$ are $(1,1,1,1),$ $(a,a,a,1),$ $(a,1,1,1)$ and $(a,1,a,1)$,
associated with $S_{k}^{(\gamma)}$ for $k=1$, $4$, $3$ and $2$, respectively.
\end{example}

\begin{lemma}
\label{ldom3}Let $A\in\mathcal{PC}_{4}^{0}$. Suppose that there are $\tau
,\nu\in\Gamma(A),$ $\tau\neq\nu$, such that $\tau(A)=\nu(A)=\theta_{A}(A).$ If
there are $3$ distinct $k$'s in $\{1,2,3,4\}$ such that $P_{A,\nu}%
[S_{k}^{(\tau)}]$ dominates $P_{A,\tau}[S_{k}^{(\tau)}]$, then there is an
H-cycle product in $A$ equal to $1.$
\end{lemma}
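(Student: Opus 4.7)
The plan is to use monomial similarity to normalize $\tau$, then derive from the dominance hypothesis five equalities $P_{A,\nu}(i,j)=P_{A,\tau}(i,j)$ that, in the normalized coordinates, pin down $A$ enough to force the ``third'' H-cycle (distinct from $\tau$ and $\nu$) to have product $1$.

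First I would apply Lemma \ref{LpermutLA} to assume $\tau=12341$ with $a_{12}=a_{23}=a_{34}=1$ and $a_{41}=\tau(A)<1$. Since $\operatorname*{rev}\tau\notin\Gamma(A)$, the H-cycle $\nu$ belongs (up to reversal) to one of the other two edge-classes of Hamiltonian cycles on $\{1,2,3,4\}$, represented by $\beta=12431$ or $\gamma=13241$. The key intermediate observation, building on Remark \ref{tequalprod}, is that for any pair of reverse positions $\{(i,j),(j,i)\}$ both contained in $U:=S_{k_1}^{(\tau)}\cup S_{k_2}^{(\tau)}\cup S_{k_3}^{(\tau)}$, the dominance gives the two inequalities $P_{A,\nu}(i,j)\le P_{A,\tau}(i,j)$ and $P_{A,\nu}(j,i)\le P_{A,\tau}(j,i)$, which multiply to $\nu(A)\le\tau(A)$; the hypothesis $\nu(A)=\tau(A)$ forces both to be equalities. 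A combinatorial count specific to $n=4$ then shows that, letting $k_4$ denote the index in $\{1,2,3,4\}\setminus\{k_1,k_2,k_3\}$, exactly one pair has only a single direction in $U$, namely the $\tau$-edge $\{\operatorname{pred}_\tau(k_4),k_4\}$, since the two diagonal pairs of $\tau$ are split $2$--$2$ among the four sets $S_k^{(\tau)}$ while each $\tau$-edge is split $3$--$1$; hence we obtain five pair-equalities $P_{A,\nu}(i,j)=P_{A,\tau}(i,j)$.

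The final step is a direct calculation in the normalized coordinates. In the $\beta$-case one computes $P_{A,\beta}(3,4)=a_{31}a_{24}=\beta(A)=\tau(A)$ while $P_{A,\tau}(3,4)=1$, so equality there would force $\tau(A)=1$, contradicting $\tau\in\Gamma(A)$. The type (b) pair must therefore be $\{3,4\}$, the remaining five equalities force $a_{24}=1$ and $a_{31}=\tau(A)$, and substitution yields
\[
\gamma(A)=a_{13}a_{32}a_{24}a_{41}=\tau(A)^{-1}\cdot 1\cdot 1\cdot\tau(A)=1.
\]
The $\gamma$-case is parallel, with $\{2,3\}$ forced as the type (b) pair and $\beta(A)=1$ resulting; the reversed orientations $\operatorname*{rev}\beta,\operatorname*{rev}\gamma$ reduce to these by a further monomial similarity. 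The main obstacle is the bookkeeping across sub-cases, but a unifying conceptual observation simplifies it: the edge set $E_\tau\triangle E_\nu$ always coincides with the edge set of the third H-cycle $\rho$, and the five pair-equalities are precisely what is needed to force $\rho(A)=1$.
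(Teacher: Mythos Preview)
Your argument is correct and follows essentially the same route as the paper's: normalize $\tau$ to $12341$, use dominance on both orientations of a pair together with $\nu(A)=\tau(A)$ to force $P_{A,\nu}=P_{A,\tau}$ on all symmetric pairs except one, and then for each choice of $\nu$ read off the constraints that make the remaining H-cycle product equal to $1$. Your version is a bit more structured---the $3$--$1$ versus $2$--$2$ split pinpointing the missing pair as the $\tau$-edge $\{\operatorname{pred}_\tau(k_4),k_4\}$, the $E_\tau\triangle E_\nu=E_\rho$ observation, and the reduction of $\beta',\gamma'$ to $\beta,\gamma$ via the permutation $(1\,3)(2\,4)$ (which fixes $\alpha$ while swapping each cycle with its reverse)---whereas the paper simply treats all four cases $\nu\in\{\beta,\beta',\gamma,\gamma'\}$ by direct inspection of $P_{A,\nu}$ against $P_{A,\tau}$.
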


\begin{proof}
Without loss of generality, we may assume that $\tau=12341$ and $a_{i,i+1}=1,$
$i=1,2,3.$ Suppose that there are $3$ distinct $k$'s such that $P_{A,\nu
}[S_{k}^{(\tau)}]$ dominates $P_{A,\tau}[S_{k}^{(\tau)}]$. Note that the union
of all such sets $S_{k}^{(\tau)}$ contains all pairs of symmetrically placed
positions, except (at most) one (it indexes $3$ rows and $3$ columns). Then,
except for one pair of symmetrically placed entries ($P_{A,\nu}\neq P_{A,\tau
}$, by Theorem \ref{thequalpath}), all entries coincide in $P_{A,\nu}$ and
$P_{A,\tau}$. In fact, by the dominance, each entry in a pair of symmetrically
placed entries in $P_{A,\nu},$ indexed by the union of the $S_{k}^{(\tau)}$'s,
is less than or equal to the corresponding entry in $P_{A,\tau}$, and the
product of the entries in each pair is $\nu(A)=\tau(A)$ in both $P_{A,\nu}$
and $P_{A,\tau}$.

Taking into account Example \ref{ex4by4} (and using the notation there), we
next consider the cases in which $P_{A,\nu}$ and $P_{A,\tau}$ coincide in all
entries except in those placed in one pair of symmetric positions. Recall that
$\tau$ is $\alpha$ as in (\ref{abg}).

\begin{itemize}
\item If $\nu=\beta$, then $P_{A,\nu}$ and $P_{A,\tau}$ differ in the entries
in positions $(3,4)$ and $(4,3).$ Moreover, $a_{24}=1$ and $a_{13}=a_{14},$
implying $\gamma(A)=\gamma^{\prime}(A)=1.$

\item If $\nu=\beta^{\prime}$, then $P_{A,\nu}$ and $P_{A,\tau}$ differ in the
entries in positions $(1,2)$ and $(2,1).$ Moreover, $a_{24}=a_{14}$ and
$a_{13}=1,$ implying $\gamma(A)=\gamma^{\prime}(A)=1.$

\item If $\nu=\gamma$, then $P_{A,\nu}$ and $P_{A,\tau}$ differ in the entries
in positions $(2,3)$ and $(3,2).$ Moreover, $a_{13}=a_{24}=1,$ implying
$\beta(A)=\beta^{\prime}(A)=1.$

\item If $\nu=\gamma^{\prime}$, then $P_{A,\nu}$ and $P_{A,\tau}$ differ in
the entries in positions $(1,4)$ and $(4,1).$ Moreover, $a_{13}=a_{14}%
=a_{24},$ implying $\beta(A)=\beta^{\prime}(A)=1.$
\end{itemize}
\end{proof}

\begin{lemma}
\label{laux4}Let $A\in\mathcal{PC}_{4}^{0}$ and suppose that $\#\Gamma(A)=3.$
Moreover, suppose that there are at least two H-cycles in $\Gamma(A)$ whose
cycle products in $A$ are $\theta_{A}(A)$. If $\nu\in\Gamma(A)$ is such that
$\nu(A)=\theta_{A}(A),$ then there is a $k\in\{1,2,3,4\}$ such that $P_{A,\nu
}[S_{k}^{(\nu)}]$ is $(A,S_{k}^{(\nu)})$-undominated.
\end{lemma}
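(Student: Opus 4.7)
\textit{(Proof plan.)}

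I proceed by contradiction: suppose that for every $k \in \{1,2,3,4\}$ there is a $\tau_k \in \Gamma(A) \setminus \{\nu\}$ with $P_{A,\tau_k}[S_k^{(\nu)}] \leq P_{A,\nu}[S_k^{(\nu)}]$. Write $\Gamma(A) = \{\nu, \mu, \pi\}$. A key preliminary remark is that $\#\Gamma(A) = 3$ forbids any H-cycle in $N$ from having cycle product $1$, since such an H-cycle together with its reverse would both be excluded from $\Gamma(A)$, giving $\#\Gamma(A) \leq 2$. The plan is to pigeonhole the dominators over $\{\mu, \pi\}$ and invoke Lemma~\ref{ldom3} to eliminate all cases except the one in which $\mu(A) = \pi(A) = \nu(A) = \theta_A(A)$, which will be handled by a direct reduction to a canonical form.

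If some $\tau \in \{\mu,\pi\}$ appears as $\tau_{k_1} = \tau_{k_2}$ for distinct $k_1, k_2$, then $S_{k_1}^{(\nu)} \cup S_{k_2}^{(\nu)}$ contains a reciprocal pair of positions (Remark~\ref{remcompl}); applying Remark~\ref{tequalprod} with $B = A$ yields $\tau(A) \leq \nu(A) = \theta_A(A)$, so $\tau(A) = \theta_A(A)$. If this same $\tau$ were used for three or more indices, Lemma~\ref{ldom3} (with the roles of $\tau$ and $\nu$ there swapped) would produce an H-cycle of product $1$, contradicting the observation above. Hence each of $\mu, \pi$ is used at most twice, and since the four indices must be covered by only two possibilities, each is used exactly twice; both then satisfy $\mu(A) = \pi(A) = \theta_A(A)$. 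Thus all three H-cycles in $\Gamma(A)$ share the common cycle product $\theta := \theta_A(A)$.

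Finally I reduce $A$ to a canonical form and derive a contradiction by explicit computation. A permutation similarity (Lemma~\ref{LpermutLA}) lets me assume $\nu = 12341$, and a subsequent diagonal similarity normalizes $a_{12} = a_{23} = a_{34} = 1$. Setting $\alpha = \nu$, $\beta = 12431$, $\gamma = 13241$, the equations $\alpha(A) = \beta(A) = \gamma(A) = \theta$ become $a_{14} = 1/\theta$, $a_{24} = \theta a_{13}$, and $a_{13}a_{24} = 1$, forcing $a_{13} = a$, $a_{24} = 1/a$, $a_{14} = a^2$ for $a := 1/\sqrt{\theta} > 1$. A direct path-matrix computation in this canonical matrix then yields $P_{A,\nu}(3,1) = 1/a^2 < 1/a = P_{A,\beta}(3,1)$ and $P_{A,\nu}(3,2) = 1/a^2 < 1 = P_{A,\gamma}(3,2)$. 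Since $(3,1), (3,2) \in S_3^{(\nu)}$, neither $P_{A,\beta}$ nor $P_{A,\gamma}$ dominates $P_{A,\nu}$ on $S_3^{(\nu)}$, so $P_{A,\nu}[S_3^{(\nu)}]$ is $(A,S_3^{(\nu)})$-undominated. Remark~\ref{Rperm} transfers this undominated property back to the original $A$ (with the index $3$ relabeled by the inverse of the permutation used), contradicting the standing assumption. The main obstacle will be this last case: once all three H-cycle products coincide, pigeonhole together with Lemma~\ref{ldom3} is no longer enough, and one must pin $A$ down to a one-parameter family and verify non-dominance entry by entry.
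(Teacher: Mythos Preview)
Your reduction to the case in which all three H-cycles in $\Gamma(A)$ share the minimum product is correct and matches the paper's Case~1 argument (pigeonhole on the dominators plus Remark~\ref{tequalprod} and Lemma~\ref{ldom3}).

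The gap is in your treatment of the all-equal case. After normalizing $\nu=\alpha=12341$ and $a_{12}=a_{23}=a_{34}=1$, you write ``the equations $\alpha(A)=\beta(A)=\gamma(A)=\theta$'' with $\beta=12431$ and $\gamma=13241$ fixed, i.e.\ you assume $\Gamma(A)=\{\alpha,\beta,\gamma\}$. But nothing in your normalization forces this: since $\#\Gamma(A)=3$ and no cycle product equals $1$, the set $\Gamma(A)$ contains exactly one of each reverse pair, so $\Gamma(A)$ could equally well be $\{\alpha,\beta,\gamma'\}$, $\{\alpha,\beta',\gamma\}$, or $\{\alpha,\beta',\gamma'\}$. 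Your one-parameter family $a_{13}=a,\ a_{24}=1/a,\ a_{14}=a^{2}$ and the entry check at $(3,1),(3,2)$ only cover the first of these four possibilities. The paper's proof handles each of the four cases by a separate (short) computation, finding that $S_{3}^{(\nu)}$ works for $\{\alpha,\beta,\gamma\}$ and $\{\alpha,\beta',\gamma\}$ while $S_{1}^{(\nu)}$ works for $\{\alpha,\beta,\gamma'\}$ and $\{\alpha,\beta',\gamma'\}$.

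You can repair your argument either by supplying the three missing computations, or by inserting a symmetry step: the cyclic rotations of $N$ along $\alpha$ preserve $\alpha$ and act transitively on the four possibilities for $\{\mu,\pi\}$ (e.g.\ the shift $i\mapsto i+1\pmod 4$ sends $\beta\to\gamma'$ and $\gamma\to\beta$), so a further permutation similarity fixing $\nu$ reduces all four cases to the one you treated. Either fix is short, but as written the proof is incomplete.
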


\begin{proof}
Let $\Gamma(A)=\{\nu,\tau,\rho\}$ with $\nu(A)=\tau(A)=\theta_{A}(A).$ Without
loss of generality, we may assume that $\nu=12341$ and $a_{i,i+1}=1,$
$i=1,2,3.$ Then, $a_{41}=\frac{1}{a_{14}}=\theta_{A}(A)<1.$

Suppose that $\rho(A)>\theta_{A}(A)$ and that there is no $k$ such that
$P_{A,\nu}[S_{k}^{(\nu)}]$ is $(A,S_{k}^{(\nu)})$-undominated. Note that there
is no $\pi\in\Gamma(A)\backslash\{\nu\}$ such that, for all $k\in\{1,2,3,4\},$
$P_{A,\pi}[S_{k}^{(\nu)}]$ dominates $P_{A,\nu}[S_{k}^{(\nu)}]$, as otherwise
$P_{A,\pi}\leq P_{A,\nu},$ implying $P_{A,\pi}=P_{A,\nu},$ as $\nu
(A)=\theta_{A}(A).$ This is impossible by Theorem \ref{thequalpath}, since
$\pi\neq\nu.$ Thus, there is exactly one $k\in\{1,2,3,4\}$ such that
$P_{A,\rho}[S_{k}^{(\nu)}]$ strictly dominates $P_{A,\nu}[S_{k}^{(\nu)}]$, as,
if there were at least two, we would have $\rho(A)\leq\theta_{A}(A),$ a
contradiction. Therefore, there are $3$ $k$'s such that $P_{A,\tau}%
[S_{k}^{(\nu)}]$ strictly dominates $P_{A,\nu}[S_{k}^{(\nu)}].$ By Lemma
\ref{ldom3}, this contradicts the fact that $\#\Gamma(A)=3.$ Thus, there is
$k$ such that $P_{A,\nu}[S_{k}^{(\nu)}]$ is $(A,S_{k}^{(\nu)})$-undominated.

Now suppose that $\rho(A)=\theta_{A}(A)$, that is, all H-cycles in $\Gamma(A)$
have the same product in $A$. We consider all possible sets $\Gamma(A)$. We
use the notation in Example \ref{ex4by4} (see also Example \ref{exSa}). Recall
that $\nu=\alpha.$

\begin{itemize}
\item If $\Gamma(A)=\{\alpha,\beta,\gamma\}$ then $a_{13}a_{24}=1$ and
$\frac{1}{a_{13}}=\frac{1}{\sqrt{a_{14}}}>\frac{1}{a_{14}}.$ In this case,
$P_{A,\nu}[S_{3}^{(\nu)}]$ is $(A,S_{3}^{(\nu)})$-undominated.

\item If $\Gamma(A)=\{\alpha,\beta,\gamma^{\prime}\}$ then $a_{13}%
a_{24}=a_{14}^{2}$ and $\frac{a_{24}}{a_{13}}=\frac{1}{a_{14}}.$ This implies
$a_{24}=\sqrt{a_{14}}>1.$ In this case, $P_{A,\nu}[S_{1}^{(\nu)}]$ is
$(A,S_{1}^{(\nu)})$-undominated.

\item If $\Gamma(A)=\{\alpha,\beta^{\prime},\gamma\}$ then $a_{13}a_{24}=1$
and $\frac{a_{24}}{a_{13}}=a_{14}.$ This implies $\frac{1}{a_{24}}=\frac
{1}{\sqrt{a_{14}}}>\frac{1}{a_{14}}.$ In this case, $P_{A,\nu}[S_{3}^{(\nu)}]$
is $(A,S_{3}^{(\nu)})$-undominated.

\item If $\Gamma(A)=\{\alpha,\beta^{\prime},\gamma^{\prime}\}$ then
$a_{13}a_{24}=a_{14}^{2}$ and $\frac{a_{24}}{a_{13}}=a_{14}.$ This implies
$a_{13}=\sqrt{a_{14}}>1.$ In this case, $P_{A,\nu}[S_{1}^{(\nu)}]$ is
$(A,S_{1}^{(\nu)})$-undominated.
\end{itemize}
\end{proof}

\bigskip

Now we can state the claim that allows us to consider both $A$ and $B$ in a
convenient form attained by monomial similarity.

\begin{theorem}
\label{th4}Let $A,B\in\mathcal{PC}_{4}^{0}$. If $\mathcal{E}(A)=\mathcal{E}%
(B),$ then $\theta_{A}(A)=\theta_{B}(B)$ and there is a $\nu\in\Gamma
(A)\cap\Gamma(B)$ such that $\nu(A)=\theta_{A}(A)=\theta_{B}(B)=\nu(B)$ and
the sequence of entries in $A$ and in $B$ along $\nu\ $ coincide.
\end{theorem}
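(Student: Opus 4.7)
The plan is first to invoke Theorem \ref{cequalprod} to obtain $\theta_{A}(A) = \theta_{B}(B) =: m$; then to produce the common H-cycle $\nu$, I would case-split on the multiplicity of H-cycles in $\Gamma(A)$ achieving $m$. If $m$ is attained uniquely by $\theta_{A}$ in $\Gamma(A)$, then either Lemma \ref{t3} (when $\#\Gamma(A) > 1$) or the vacuity of the undominance condition (when $\#\Gamma(A) = 1$) produces distinct $k_{1}, k_{2} \in \{1,2,3,4\}$ such that $P_{A,\theta_{A}}[S_{k_{i}}^{(\theta_{A})}]$ is $(A, S_{k_{i}}^{(\theta_{A})})$-undominated for $i = 1,2$. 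Corollary \ref{lauxx} then yields $\theta_{A} \in \Gamma(B)$, $P_{A,\theta_{A}} = P_{B,\theta_{A}}$, and the coincidence of entries along $\theta_{A}$ in $A$ and $B$, which in particular gives $\theta_{A}(B) = m$; take $\nu := \theta_{A}$.

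Suppose instead that $m$ is attained by at least two H-cycles of $\Gamma(A)$. I would first show that for each minimal $\mu \in \Gamma(A)$ there exists some $k$ with $P_{A,\mu}[S_{k}^{(\mu)}]$ $(A, S_{k}^{(\mu)})$-undominated: this is Lemma \ref{laux4} when $\#\Gamma(A) = 3$, and for $\#\Gamma(A) = 2$ with both H-cycles minimal it follows from Theorem \ref{thequalpath} (distinct H-cycles have distinct path matrices) together with the equality of the cycle products, which prevents either of $P_{A,\nu_{1}}, P_{A,\nu_{2}}$ from entry-wise dominating the other. Theorem \ref{t4} then places each such $\mu$ in $\Gamma(B)$ with $P_{B,\mu}[S_{k}^{(\mu)}] = P_{A,\mu}[S_{k}^{(\mu)}]$, matching three consecutive entries of $B$ along $\mu$ with those of $A$ and leaving at most one ``wrap-around'' entry per minimal cycle still undetermined.

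To finally pin down $\mu(B) = m$ for some common minimal cycle, I would apply Lemma \ref{lemsamerow} with $\tau = \mu$ and pigeonhole over the four values of $k'$ against $\#\Gamma(B) \leq 3$, producing $\rho \in \Gamma(B)$ with $P_{B,\rho}[S_{k_{1}'}^{(\mu)} \cup S_{k_{2}'}^{(\mu)}] \leq P_{A,\mu}[S_{k_{1}'}^{(\mu)} \cup S_{k_{2}'}^{(\mu)}]$ for some $k_{1}' \neq k_{2}'$; Remark \ref{tequalprod} then forces $\rho(B) \leq m$, hence $\rho(B) = m$. A short analysis of how the free wrap-around parameters in $B$ interact with the partial matchings and with the global equality $\theta_{B}(B) = m$ identifies such a $\rho$ as one of the minimal cycles already in $\Gamma(A) \cap \Gamma(B)$, yielding $\mu(B) = m = \mu(A)$. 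Combined with the matching of the three consecutive entries along $\mu$, the cycle-product identity $P_{B,\mu}(i,j)P_{B,\mu}(j,i) = \mu(B)$ then forces the final entry to match, giving $P_{A,\mu} = P_{B,\mu}$ (Remark \ref{remcompl}) and the coincidence of entries along $\mu$ (Theorem \ref{thequalpath}). The main obstacle is this closing pigeonhole-plus-case-analysis in the sub-case $\#\Gamma(A) = 2$ with both cycles minimal, where only a single undominated set per minimal cycle is guaranteed and one surviving free parameter must be eliminated via the global constraint $\theta_{B}(B) = m$.
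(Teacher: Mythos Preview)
Your overall architecture matches the paper's: invoke Theorem~\ref{cequalprod}, then split on whether the minimum cycle product in $A$ is attained uniquely, using Lemma~\ref{t3} and Corollary~\ref{lauxx} in the unique case and Lemma~\ref{laux4} plus Theorem~\ref{t4} otherwise. The tools and the ``one undominated set per minimal cycle'' argument in the $\#\Gamma(A)=2$ sub-case are exactly what the paper uses.

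However, there is a genuine gap in your closing step. After Theorem~\ref{t4} puts each minimal $\mu\in\Gamma(A)$ into $\Gamma(B)$ with $P_{B,\mu}[S_k^{(\mu)}]=P_{A,\mu}[S_k^{(\mu)}]$, you still need $\mu(B)=m$ for \emph{some} such $\mu$. Your pigeonhole via Lemma~\ref{lemsamerow} only produces an unspecified $\rho\in\Gamma(B)$ with $\rho(B)=m$; nothing you have written forces $\rho$ to be one of the minimal cycles of $A$ already placed in $\Gamma(B)$. The promised ``short analysis of how the free wrap-around parameters in $B$ interact'' is not an argument, and you explicitly flag this as the main obstacle without resolving it. Concretely, if $\#\Gamma(A)=2$ with both cycles minimal while $\#\Gamma(B)=3$ with a unique minimal cycle $\sigma\notin\Gamma(A)$, your line of reasoning does not yet exclude $\rho=\sigma$.

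The paper closes this gap with a device you omit: symmetry in $A$ and $B$. Before entering Case~2, the paper observes that one may interchange the roles of $A$ and $B$, so that Case~2 may be assumed to have at least two minimal H-cycles in \emph{both} $A$ and $B$. With that extra hypothesis the remaining pigeonhole is immediate: in Case~2.1 ($\#\Gamma(A)=3$) two minimal cycles $\pi,\tau$ of $A$ land in $\Gamma(B)$, and since $\#\Gamma(B)\le 3$ while $B$ has at least two minimal cycles, one of $\pi,\tau$ must be minimal in $B$; in Case~2.2 ($\#\Gamma(A)=\#\Gamma(B)=2$) one gets $\Gamma(A)=\Gamma(B)$ outright, so every cycle is minimal in $B$. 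Inserting this symmetry reduction before your second case would eliminate the vague closing step and make your proof complete.
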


\begin{proof}
By Theorem \ref{cequalprod}, $\theta_{B}(B)=\theta_{A}(A).$ By interchanging
the roles of $A\ $and $B$, it is enough to consider the following cases.

Case 1: Suppose that $\#\Gamma(A)=1$ or $\theta_{A}(A)<\tau(A)$ for any
$\tau\in\Gamma(A),$ $\tau\neq\theta_{A}$. In the former case, the claim
follows from Corollary \ref{lauxx}, and in the latter case it follows from
Lemma \ref{t3} and Corollary \ref{lauxx}.

Case 2: Suppose that there are two H-cycles in $A$ and in $B$ whose cycle
products are $\theta_{A}(A)=\theta_{B}(B)$.

Case 2.1: Suppose that $\#\Gamma(A)=3.$ Let $\Gamma(A)=\{\pi,\tau,\rho\}$ with
$\pi(A)=\tau(A)=\theta_{A}(A).$ By Lemma \ref{laux4}, there are $k_{1},k_{2}$
such that $P_{A,\pi}[S_{k_{1}}^{(\pi)}]$ is $(A,S_{k_{1}}^{(\pi)}%
)$-undominated and $P_{A,\tau}[S_{k_{2}}^{(\tau)}]$ is $(A,S_{k_{2}}^{(\tau
)})$-undominated$.$ By Theorem \ref{t4}, $\pi,\tau\in\Gamma(B),$ $P_{A,\pi
}[S_{k_{1}}^{(\pi)}]=P_{B,\pi}[S_{k_{1}}^{(\pi)}]$ and $P_{A,\tau}[S_{k_{2}%
}^{(\tau)}]=P_{B,\tau}[S_{k_{2}}^{(\tau)}].$ Taking into account the
hypothesis, either $\pi(B)=\theta_{B}(B)$ or $\tau(B)=\theta_{B}(B).$ In the
first case $P_{A,\pi}=P_{B,\pi}$ and, taking into account Theorem
\ref{thequalpath}, $\nu=\pi$ satisfies the claim; otherwise $P_{A,\tau
}=P_{B,\tau}$ and $\nu=\tau$ satisfies the claim.

Case 2.2: Suppose that $\#\Gamma(A)=\#\Gamma(B)=2.$ Then $\Gamma
(A)=\{\tau,\rho\}$ with $\tau(A)=\rho(A)=\theta_{A}(A).$ Then, for any $\pi
\in\Gamma(A),$ there is $k$ such that $P_{A,\pi}[S_{k}^{(\pi)}]$ is
$(A,S_{k}^{(\pi)})$-undominated, as otherwise $P_{A,\pi^{\prime}}\leq
P_{A,\pi}$ for $\pi^{\prime}\in\Gamma(A),$ $\pi\neq\pi^{\prime}.$ In that
case, because $\pi(A)=\pi^{\prime}(A),$ we would have $P_{A,\pi^{\prime}%
}=P_{A,\pi},$ implying $\pi=\pi^{\prime}$ by Theorem \ref{thequalpath}, a
contradiction. Thus, by Theorem \ref{t4}, if $\mathcal{E}(A)=\mathcal{E}(B)$
then $\Gamma(A)=\Gamma(B).$ Taking into account the hypothesis, for any
$\pi\in\Gamma(A)=\Gamma(B)$ we have $\theta_{A}(A)=\theta_{B}(B)=\pi(B).$ By
Theorem \ref{t4} and Remark \ref{remcompl}, $P_{A,\pi}=P_{B,\pi}$. Thus, the
claim follows from Theorem \ref{thequalpath}. (In fact, the same entries in
$A$ and $B$ along two distinct H-cycles$,$ none of which is the reverse of the
other, implies $A=B$.)
\end{proof}

\bigskip

\textbf{Proof of the main result}

\bigskip

Consider the matrices in $\mathcal{PC}_{4}^{0}$,
\begin{equation}
A=\left[
\begin{array}
[c]{cccc}%
1 & 1 & a_{13} & a_{14}\\
1 & 1 & 1 & a_{24}\\
\frac{1}{a_{13}} & 1 & 1 & 1\\
\frac{1}{a_{14}} & \frac{1}{a_{24}} & 1 & 1
\end{array}
\right]  \text{ and }B=\left[
\begin{array}
[c]{cccc}%
1 & 1 & b_{13} & a_{14}\\
1 & 1 & 1 & b_{24}\\
\frac{1}{b_{13}} & 1 & 1 & 1\\
\frac{1}{a_{14}} & \frac{1}{b_{24}} & 1 & 1
\end{array}
\right]  . \label{AB}%
\end{equation}
Then, for $\alpha=12341$, we have $\alpha(A)=\alpha(B)=\frac{1}{a_{14}}$.
Suppose that $a_{14}>1$ and $\theta_{A}(A)=\theta_{B}(B)=\frac{1}{a_{14}}$.
Each of $\Gamma(A)$ and $\Gamma(B)$ is contained in one of the sets
\[
\{\alpha,\beta,\gamma\},\text{ }\quad\{\alpha,\beta,\gamma^{\prime}\},\text{
}\quad\{\alpha,\beta^{\prime},\gamma\}\text{,}\quad\text{or }\{\alpha
,\beta^{\prime},\gamma^{\prime}\},
\]
with $\beta,\beta^{\prime},\gamma,\gamma^{\prime}$ as in (\ref{abg}) and
(\ref{abg1}). The matrices $P_{B,\rho},$ $\rho\in\Gamma(B)$, can be obtained
from the matrices $P_{A,\rho}$ in Example \ref{ex4by4} by replacing $a_{13}$
and $a_{24}$ by $b_{13}$ and $b_{24}$, respectively.

In the next lemmas, Examples \ref{ex4by4} and \ref{exSa} are helpful.

\begin{lemma}
\label{lB}Let $A,B\in\mathcal{PC}_{4}^{0}$ be as in (\ref{AB}), with
$\theta_{A}(A)=\theta_{B}(B)=\frac{1}{a_{14}}$. If $\mathcal{E}(A)=\mathcal{E}%
(B)$ and $\beta\in\Gamma(A)$, then $A=B.$
\end{lemma}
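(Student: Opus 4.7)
The plan is to apply Theorem~\ref{t4} to a carefully chosen H-cycle $\tau\in\Gamma(A)$ and index $k$, so that the $(A,S_k^{(\tau)})$-undominance of $P_{A,\tau}[S_k^{(\tau)}]$ forces $\tau\in\Gamma(B)$ together with $P_{B,\tau}[S_k^{(\tau)}]=P_{A,\tau}[S_k^{(\tau)}]$; I will arrange for $S_k^{(\tau)}$ to contain positions whose entries in $P_{B,\tau}$ directly encode both unknown entries $b_{13}$ and $b_{24}$, yielding $A=B$. The structural constraints I will use are: $\beta(A)=a_{24}/a_{13}<1$ gives $a_{24}<a_{13}$; $\theta_A(A)=1/a_{14}$ gives $a_{13}\le a_{14}a_{24}$; and $\Gamma(A)\subseteq\{\alpha,\beta,\gamma,\gamma'\}$ with at most one of $\gamma,\gamma'$ present.

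My first attempt takes $\tau=\beta$ with $k=3$, since $S_3^{(\beta)}=\{(3,1),(3,2),(3,4),(1,2),(1,4),(2,4)\}$ contains positions $(3,1)$ and $(1,4)$, whose entries in $P_{B,\beta}$ are $1/b_{13}$ and $b_{24}$, respectively. I verify undominance case by case. Against $P_{A,\alpha}$: position $(3,4)$ satisfies $P_{A,\alpha}(3,4)=1>\beta(A)=P_{A,\beta}(3,4)$. Against $P_{A,\gamma}$ (if $\gamma\in\Gamma(A)$): combining $\gamma(A)<1$ with $\theta_A(A)=1/a_{14}$ gives $1\le a_{13}a_{24}<a_{14}$, and $a_{24}<a_{13}$ then forces $a_{13}>1$, so $P_{A,\gamma}(3,2)=1>1/a_{13}=P_{A,\beta}(3,2)$. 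Against $P_{A,\gamma'}$ (if $\gamma'\in\Gamma(A)$): a direct entrywise comparison on all six positions of $S_3^{(\beta)}$ shows $P_{A,\gamma'}[S_3^{(\beta)}]\le P_{A,\beta}[S_3^{(\beta)}]$ if and only if $a_{24}\ge a_{14}$. Thus, unless $\gamma'\in\Gamma(A)$ and $a_{24}\ge a_{14}$, Theorem~\ref{t4} applies and matching at $(3,1)$ and $(1,4)$ yields $b_{13}=a_{13}$ and $b_{24}=a_{24}$.

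For the remaining exceptional case $\gamma'\in\Gamma(A)$ with $a_{24}\ge a_{14}$, I switch to $\tau=\gamma'$ with $k=3$. I first rule out $a_{24}=a_{14}$: the bound $\gamma'(A)\ge 1/a_{14}$ rearranges to $a_{13}a_{24}\le a_{14}^2$, which combined with $a_{24}=a_{14}$ would give $a_{13}\le a_{14}$, contradicting $a_{13}>a_{24}=a_{14}$. So $a_{24}>a_{14}$ strictly, and consequently $a_{13}>a_{14}$. The set $S_3^{(\gamma')}=\{(3,1),(3,4),(3,2),(1,4),(1,2),(4,2)\}$ contains $(3,1)$ and $(4,2)$, whose entries in $P_{B,\gamma'}$ are $1/b_{13}$ and $1/b_{24}$. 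Undominance of $P_{A,\gamma'}[S_3^{(\gamma')}]$ follows from $P_{A,\alpha}(3,1)=1/a_{14}>1/a_{13}=P_{A,\gamma'}(3,1)$ (ruling out $\alpha$ since $a_{13}>a_{14}$) and $P_{A,\beta}(3,4)=a_{24}/a_{13}>a_{14}/a_{13}=P_{A,\gamma'}(3,4)$ (ruling out $\beta$ since $a_{24}>a_{14}$); and $\gamma\notin\Gamma(A)$. Theorem~\ref{t4} then gives $\gamma'\in\Gamma(B)$ and $P_{B,\gamma'}[S_3^{(\gamma')}]=P_{A,\gamma'}[S_3^{(\gamma')}]$, whence $b_{13}=a_{13}$ and $b_{24}=a_{24}$, completing the proof. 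The main obstacle is precisely isolating this exceptional case and realizing that, when $S_3^{(\beta)}$ fails to be undominated, switching to the path matrix of $\gamma'$ provides an alternate undominated set encoding both unknown entries.
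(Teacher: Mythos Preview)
Your proof is correct, but it follows a genuinely different route from the paper's. The paper argues directly via Theorem~\ref{lemsamerow}: from $\mathcal{E}(A)\subseteq\mathcal{E}(B)$ it obtains some $\rho\in\Gamma(B)$ with $P_{B,\rho}[S_3^{(\beta)}]\le P_{A,\beta}[S_3^{(\beta)}]$, and then rules out each of $\rho=\alpha,\gamma,\beta',\gamma'$ by comparing entries of $P_{B,\rho}$ (using only the constraint $\theta_B(B)=1/a_{14}$ on $B$). This forces $\rho=\beta$, and the symmetric argument from $\mathcal{E}(B)\subseteq\mathcal{E}(A)$ gives equality on $S_3^{(\beta)}$, hence $A=B$. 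You instead invoke Theorem~\ref{t4}, which requires $(A,S_3^{(\beta)})$-undominance of $P_{A,\beta}$ \emph{within} $\Gamma(A)$; this is cleaner when it applies, since the conclusion is immediate, but it can fail (your exceptional case $\gamma'\in\Gamma(A)$ with $a_{24}\ge a_{14}$), forcing a second application of Theorem~\ref{t4} with $\tau=\gamma'$. The paper's approach avoids any case split at the cost of checking one extra candidate cycle ($\beta'$) in $\Gamma(B)$; your approach trades that for a bifurcation on the structure of $\Gamma(A)$ and a secondary undominance argument. Both reach $b_{13}=a_{13}$ and $b_{24}=a_{24}$ by reading off the same positions in the relevant path matrix.
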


\begin{proof}
Suppose that $\beta\in\Gamma(A).$ Then, $a_{24}<a_{13}$, as $\beta(A)<1.$
Since $\mathcal{E}(A)\subseteq\mathcal{E}(B),$ by Theorem \ref{lemsamerow},
there is $\rho\in\Gamma(B)$ such that $P_{B,\rho}[S_{3}^{(\beta)}]\leq
P_{A,\beta}[S_{3}^{(\beta)}].$ We will see that this implies $\rho=\beta.$

\begin{itemize}
\item Since $\beta(A)<1,$ it follows that $P_{B,\alpha}[S_{3}^{(\beta)}]$ does
not dominate $P_{A,\beta}[S_{3}^{(\beta)}]$.

\item Suppose that $\gamma\in\Gamma(B)$. Since $\gamma(B)\geq\alpha(B),$ we
have $b_{13}b_{24}\geq1.$ Then, $P_{B,\gamma}[S_{3}^{(\beta)}]$ does not
dominate $P_{A,\beta}[S_{3}^{(\beta)}],$ since $b_{13}\leq1$ and $b_{24}%
\leq\beta(A)<1$ do not occur simultaneously.

\item Suppose that $\beta^{\prime}\in\Gamma(B)$. Since $\beta(A)<1,$ it
follows that $P_{B,\beta^{\prime}}[S_{3}^{(\beta)}]$ does not dominate
$P_{A,\beta}[S_{3}^{(\beta)}]$.

\item Suppose that $\gamma^{\prime}\in\Gamma(B)$. If $\frac{b_{24}}{a_{14}%
}\geq1$, then $\frac{a_{14}}{b_{13}}\geq1$, as, from $\gamma^{\prime}%
(B)\geq\alpha(B)$, we get $\frac{a_{14}}{b_{13}}\geq\frac{b_{24}}{a_{14}}$.
Thus, either $\frac{a_{14}}{b_{24}}>1$ or $\frac{a_{14}}{b_{13}}\geq1$. In any
case, we have that $P_{B,\gamma^{\prime}}[S_{3}^{(\beta)}]$ does not dominate
$P_{A,\beta}[S_{3}^{(\beta)}]$.
\end{itemize}

Then, $\beta\in\Gamma(B)$ and $P_{B,\beta}[S_{3}^{(\beta)}]\ $ dominates
$P_{A,\beta}[S_{3}^{(\beta)}].$ Similarly, since $\mathcal{E}(B)\subseteq
\mathcal{E}(A)$ and $\beta\in\Gamma(B)$, we get that $P_{A,\beta}%
[S_{3}^{(\beta)}]\ $ dominates $P_{B,\beta}[S_{3}^{(\beta)}].$ Thus,
$P_{A,\beta}[S_{3}^{(\beta)}]=P_{B,\beta}[S_{3}^{(\beta)}]$, implying $A=B.$
\end{proof}

\begin{lemma}
\label{lBp}Let $A,B\in\mathcal{PC}_{4}^{0}$ be as in (\ref{AB}), with
$\theta_{A}(A)=\theta_{B}(B)=\frac{1}{a_{14}}$. If $\mathcal{E}(A)=\mathcal{E}%
(B)$ and $\beta^{\prime}\in\Gamma(A)$, then $A=B.$
\end{lemma}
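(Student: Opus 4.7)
The plan is to mirror the proof of Lemma \ref{lB}, replacing the H-cycle $\beta$ by its reversal $\beta'=\operatorname*{rev}\beta$ (so $P_{A,\beta'}=(P_{A,\beta})^{(-T)}$) and the set $S_{3}^{(\beta)}$ by $S_{1}^{(\beta')}$. The choice $k=1$ is natural because $(1,2)\in S_{1}^{(\beta')}$ and $P_{A,\beta'}(1,2)=\beta'(A)<1$ plays the pivotal role that $P_{A,\beta}(3,4)=\beta(A)<1$ played previously. From $\beta'\in\Gamma(A)$ we read off $a_{13}<a_{24}$, and from Example \ref{exSa} the entries of $P_{A,\beta'}$ at the six positions of $S_{1}^{(\beta')}=\{(1,2),(1,3),(1,4),(3,2),(3,4),(4,2)\}$ are $\beta'(A),a_{13},a_{13},1/a_{24},1,1/a_{24}$, respectively.

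By Theorem \ref{lemsamerow} applied to $\mathcal{E}(A)\subseteq\mathcal{E}(B)$, there is $\rho\in\Gamma(B)$ with $P_{B,\rho}[S_{1}^{(\beta')}]\leq P_{A,\beta'}[S_{1}^{(\beta')}]$, and I would show $\rho=\beta'$ by eliminating the remaining candidates. For $\rho\in\{\alpha,\beta\}$ the $(1,2)$ entry of $P_{B,\rho}$ equals $1>\beta'(A)$, so dominance fails. If $\gamma\in\Gamma(B)$, dominance at $(1,2)$ and $(3,4)$ forces $b_{13}\leq\beta'(A)<1$ and $b_{24}\leq 1$, giving $b_{13}b_{24}<1$; this contradicts $\gamma(B)\geq\theta_{B}(B)=\alpha(B)=1/a_{14}$, which rearranges to $b_{13}b_{24}\geq 1$. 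The case $\gamma'\in\Gamma(B)$ is the main obstacle and requires combining several inequalities: dominance at $(1,4)$ gives $a_{14}\leq a_{13}$, so (with $a_{13}<a_{24}$) $a_{24}>a_{14}$; dominance at $(3,4)$ and $(4,2)$ gives $b_{13}\geq a_{14}$ and $b_{24}\geq a_{24}$; multiplying yields $b_{13}b_{24}\geq a_{14}a_{24}>a_{14}^{2}$, contradicting $\gamma'(B)\geq\alpha(B)$, which rearranges to $b_{13}b_{24}\leq a_{14}^{2}$.

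Having established $\beta'\in\Gamma(B)$ with $P_{B,\beta'}[S_{1}^{(\beta')}]\leq P_{A,\beta'}[S_{1}^{(\beta')}]$, the symmetric application of the same argument to $\mathcal{E}(B)\subseteq\mathcal{E}(A)$ forces the reverse inequality, so equality holds on $S_{1}^{(\beta')}$. Reading off the $(1,3)$ and $(1,2)$ entries then gives $b_{13}=a_{13}$ and $b_{13}/b_{24}=a_{13}/a_{24}$, whence $b_{24}=a_{24}$; since $A$ and $B$ already share $a_{14}$, this yields $A=B$.
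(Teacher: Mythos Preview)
Your proof is correct and follows essentially the same strategy as the paper: apply Theorem~\ref{lemsamerow} with the set $S_{1}^{(\beta')}$ and eliminate every candidate $\rho\in\Gamma(B)$ except $\beta'$, then use symmetry. The tactical differences are minor: the paper first excludes $\beta$ from $\Gamma(B)$ by invoking Lemma~\ref{lB}, whereas you dispose of $\rho=\beta$ directly via the $(1,2)$ entry; and for the $\gamma'$ case the paper argues by the dichotomy ``either $a_{14}/b_{24}\geq 1$ or $a_{14}/b_{13}>1$'' (failing at $(1,2)$ or $(3,4)$ respectively), while you instead combine the constraints at $(1,4)$, $(3,4)$, $(4,2)$ to force $b_{13}b_{24}>a_{14}^{2}$.
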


\begin{proof}
Suppose that $\beta^{\prime}\in\Gamma(A).$ Then, $\beta\notin\Gamma(B)$, as
otherwise, by Lemma \ref{lB}, $\ A=B,$ which contradicts the fact that
$\beta\notin\Gamma(A).$ We have $a_{13}<a_{24}$, as $\beta^{\prime}(A)<1.$
Since $\mathcal{E}(A)\subseteq\mathcal{E}(B),$ by Theorem \ref{lemsamerow},
there is $\rho\in\Gamma(B)$ such that $P_{B,\rho}[S_{1}^{(\beta^{\prime}%
)}]\leq P_{A,\beta^{\prime}}[S_{1}^{(\beta^{\prime})}].$ We will see that this
implies $\rho=\beta^{\prime}.$

\begin{itemize}
\item Since $\beta^{\prime}(A)<1,$ it follows that $P_{B,\alpha}[S_{1}%
^{(\beta^{\prime})}]$ $\ $ does not dominate $P_{A,\beta^{\prime}}%
[S_{1}^{(\beta^{\prime})}]$.

\item Suppose that $\gamma\in\Gamma(B)$. Since $\gamma(B)\geq\alpha(B),$ we
have $b_{13}b_{24}\geq1.$ Then, $P_{B,\gamma}[S_{1}^{(\beta^{\prime})}]$ does
not dominate $P_{A,\beta^{\prime}}[S_{1}^{(\beta^{\prime})}],$ since
$b_{24}\leq1$ and $b_{13}\leq\beta^{\prime}(A)<1$ do not occur simultaneously.

\item Suppose that $\gamma^{\prime}\in\Gamma(B)$. If $\frac{b_{24}}{a_{14}}%
>1$, then $\frac{a_{14}}{b_{13}}>1,$ as, from $\gamma^{\prime}(B)\geq
\alpha(B)$, we have $\frac{a_{14}}{b_{13}}\geq\frac{b_{24}}{a_{14}}.$ Thus,
either $\frac{a_{14}}{b_{24}}\geq1$ or $\frac{a_{14}}{b_{13}}>1.$ In any case,
we have that $P_{B,\gamma^{\prime}}[S_{1}^{(\beta^{\prime})}]$ does not
dominate $P_{A,\beta^{\prime}}[S_{1}^{(\beta^{\prime})}].$
\end{itemize}

Then $\beta^{\prime}\in\Gamma(B)$ and $P_{B,\beta^{\prime}}[S_{1}%
^{(\beta^{\prime})}]\ $ dominates $P_{A,\beta^{\prime}}[S_{1}^{(\beta^{\prime
})}].$ Similarly, since $\mathcal{E}(B)\subseteq\mathcal{E}(A)$ and
$\beta^{\prime}\in\Gamma(B)$, we get that $P_{A,\beta^{\prime}}[S_{1}%
^{(\beta^{\prime})}]\ $ dominates $P_{B,\beta^{\prime}}[S_{1}^{(\beta^{\prime
})}].$ Thus, $P_{A,\beta^{\prime}}[S_{1}^{(\beta^{\prime})}]=P_{B,\beta
^{\prime}}[S_{1}^{(\beta^{\prime})}]$, implying $A=B.$
\end{proof}

\begin{lemma}
\label{lG}Let $A,B\in\mathcal{PC}_{4}^{0}$ be as in (\ref{AB}), with
$\theta_{A}(A)=\theta_{B}(B)=\frac{1}{a_{14}}$. If $\mathcal{E}(A)=\mathcal{E}%
(B)$ and $\gamma\in\Gamma(A)$, then $A=B.$
\end{lemma}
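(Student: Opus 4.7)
The plan follows the template of Lemmas \ref{lB} and \ref{lBp}, with a preliminary reduction that lets the argument go through using a single choice of row. First, I would dispose of the easy subcases by invoking the preceding lemmas: if $\beta \in \Gamma(A)$, Lemma \ref{lB} gives $A = B$; if $\beta' \in \Gamma(A)$, Lemma \ref{lBp} gives $A = B$. Otherwise $\beta(A) = a_{24}/a_{13} \geq 1$ and $\beta'(A) = a_{13}/a_{24} \geq 1$, which forces $a_{13} = a_{24} =: a$. The analogous argument applied with the roles of $A$ and $B$ swapped in Lemmas \ref{lB} and \ref{lBp}, whose remaining hypotheses are symmetric in $A$ and $B$, likewise lets me assume $b_{13} = b_{24} =: b$. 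Then $A$ and $B$ agree in every entry except the $(1,3),(3,1),(2,4),(4,2)$ positions, which carry $a$ and $b$ respectively; from $\gamma(A) = a^2/a_{14} < 1$ we get $a < \sqrt{a_{14}}$, and $\Gamma(B) \subseteq \{\alpha, \gamma, \gamma'\}$.

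Second, I would apply Theorem \ref{lemsamerow} to $\mathcal{E}(A) \subseteq \mathcal{E}(B)$ with $\tau = \gamma$ and $k = 2$: some $\rho \in \Gamma(B)$ satisfies $P_{B,\rho}[S_2^{(\gamma)}] \leq P_{A,\gamma}[S_2^{(\gamma)}]$. The pivotal position is $(2,3) \in S_2^{(\gamma)}$, at which $P_{A,\gamma}(2,3) = \gamma(A) = a^2/a_{14} < 1$. Reading off from Example \ref{ex4by4}, one has $P_{B,\alpha}(2,3) = 1$ and $P_{B,\gamma'}(2,3) = 1$, so neither $\alpha$ nor $\gamma'$ can be $\rho$. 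Hence $\rho = \gamma$, which in particular forces $\gamma \in \Gamma(B)$ (i.e.\ $b^2 < a_{14}$). The dominance at position $(1,3) \in S_2^{(\gamma)}$ then yields $b = P_{B,\gamma}(1,3) \leq P_{A,\gamma}(1,3) = a$.

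Third, by the symmetric application of Theorem \ref{lemsamerow} to $\mathcal{E}(B) \subseteq \mathcal{E}(A)$ with the now-established $\gamma \in \Gamma(B)$, the identical argument with $A$ and $B$ swapped gives $a \leq b$. Combining the two inequalities yields $a = b$, and hence $A = B$.

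The main obstacle is selecting the correct $S_k^{(\gamma)}$. Among the four choices, only $S_2^{(\gamma)}$ contains the pair $(2,3)$ at which $P_{A,\gamma}$ attains the cycle product $\gamma(A)<1$, which is precisely the position where the competing path matrices $P_{B,\alpha}$ and $P_{B,\gamma'}$ both equal $1$, thereby ruling them out. Without the preliminary reduction to $a_{13}=a_{24}$, one also has to exclude $\rho=\beta$ and $\rho=\beta'$, but $P_{B,\beta}(2,3)=b_{24}$ and $P_{B,\beta'}(2,3)=b_{13}$ need not exceed $a^2/a_{14}$, so the single-position argument alone cannot dispose of them; this is exactly what the initial appeal to Lemmas \ref{lB} and \ref{lBp} accomplishes.
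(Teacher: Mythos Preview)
Your proof is correct and follows essentially the same approach as the paper: invoke Lemmas \ref{lB} and \ref{lBp} to rule out $\beta,\beta'\in\Gamma(A)\cup\Gamma(B)$, then apply Theorem \ref{lemsamerow} with $S_{2}^{(\gamma)}$, use the $(2,3)$ entry (where $P_{A,\gamma}(2,3)=\gamma(A)<1$ but $P_{B,\alpha}(2,3)=P_{B,\gamma'}(2,3)=1$) to force $\rho=\gamma$, and finish by symmetry. The only difference is that you make the consequence $a_{13}=a_{24}$, $b_{13}=b_{24}$ explicit, whereas the paper leaves it implicit and simply concludes $P_{A,\gamma}[S_{2}^{(\gamma)}]=P_{B,\gamma}[S_{2}^{(\gamma)}]$ directly (which already pins down both $a_{13}=b_{13}$ and $a_{24}=b_{24}$ from positions $(1,3)$ and $(2,4)$ without the reduction).
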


\begin{proof}
Suppose that $\gamma\in\Gamma(A).$ If $\beta\in\Gamma(A)\cup\Gamma(B)$ or
$\beta^{\prime}\in\Gamma(A)\cup\Gamma(B)$, by Lemmas \ref{lB} and \ref{lBp},
$A=B$. Suppose that $\beta,\beta^{\prime}\notin\Gamma(A)\cup\Gamma(B)$. Since
$\mathcal{E}(A)\subseteq\mathcal{E}(B),$ by Theorem \ref{lemsamerow}, there is
$\rho\in\Gamma(B)$ such that $P_{B,\rho}[S_{2}^{(\gamma)}]\leq P_{A,\gamma
}[S_{2}^{(\gamma)}].$ We will see that this implies $\rho=\gamma.$

\begin{itemize}
\item Since $\gamma(A)<1,$ it follows that $P_{B,\alpha}[S_{2}^{(\gamma)}]\ $
does not dominate $P_{A,\gamma}[S_{2}^{(\gamma)}]$.

\item If $\gamma^{\prime}\in\Gamma(B),$ $P_{B,\gamma^{\prime}}[S_{2}%
^{(\gamma)}]\ $ does not dominate $P_{A,\gamma}[S_{2}^{(\gamma)}]$.
\end{itemize}

Then, $\gamma\in\Gamma(B)$ and $P_{B,\gamma}[S_{2}^{(\gamma)}]\ $ dominates
$P_{A,\gamma}[S_{2}^{(\gamma)}].$ Similarly, since $\mathcal{E}(B)\subseteq
\mathcal{E}(A)$ and $\gamma\in\Gamma(B)$, we get that $P_{A,\gamma}%
[S_{2}^{(\gamma)}]\ $ dominates $P_{B,\gamma}[S_{2}^{(\gamma)}].$ Thus,
$P_{A,\gamma}[S_{2}^{(\gamma)}]=P_{B,\gamma}[S_{2}^{(\gamma)}]$, implying
$A=B.$
\end{proof}

\begin{lemma}
\label{lGp}Let $A,B\in\mathcal{PC}_{4}^{0}$ be as in (\ref{AB}), with
$\theta_{A}(A)=\theta_{B}(B)=\frac{1}{a_{14}}$. If $\mathcal{E}(A)=\mathcal{E}%
(B)$ and $\gamma^{\prime}\in\Gamma(A)$, then $A=B.$
\end{lemma}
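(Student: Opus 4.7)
The plan is to mirror the argument of Lemma \ref{lG} almost verbatim, exploiting symmetries of the set-up between the H-cycles $\gamma$ and $\gamma^{\prime}$. First I would do the usual reduction: if any of $\beta, \beta^{\prime}, \gamma$ belongs to $\Gamma(A) \cup \Gamma(B)$, then invoking Lemmas \ref{lB}, \ref{lBp}, and \ref{lG} (interchanging the roles of $A$ and $B$ if the cycle in question lies only in $\Gamma(B)$) immediately gives $A = B$. So from here on I may assume $\Gamma(A), \Gamma(B) \subseteq \{\alpha, \gamma^{\prime}\}$.

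Next I would apply Theorem \ref{lemsamerow} to $\tau = \gamma^{\prime}$ with $k = 4$: from $\mathcal{E}(A) \subseteq \mathcal{E}(B)$ there is $\rho \in \Gamma(B)$ with $P_{B,\rho}[S_{4}^{(\gamma^{\prime})}] \leq P_{A,\gamma^{\prime}}[S_{4}^{(\gamma^{\prime})}]$. Given the reduction, $\rho \in \{\alpha, \gamma^{\prime}\}$. The crucial step is to rule out $\rho = \alpha$. For this I would read off the $(4,1)$ entries: $P_{A,\gamma^{\prime}}(4,1) = \frac{1}{a_{13} a_{24}}$ while $P_{B,\alpha}(4,1) = \frac{1}{a_{14}}$, and $(4,1) \in S_{4}^{(\gamma^{\prime})}$. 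Dominance at this position would force $a_{13} a_{24} \leq a_{14}$, which directly contradicts $\gamma^{\prime}(A) = \frac{a_{14}}{a_{13} a_{24}} < 1$. Hence $\rho = \gamma^{\prime}$, so $\gamma^{\prime} \in \Gamma(B)$ and $P_{B,\gamma^{\prime}}[S_{4}^{(\gamma^{\prime})}] \leq P_{A,\gamma^{\prime}}[S_{4}^{(\gamma^{\prime})}]$.

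Running the symmetric argument, now using $\mathcal{E}(B) \subseteq \mathcal{E}(A)$ and the fact that $\gamma^{\prime} \in \Gamma(B)$ (so $\gamma^{\prime}(B) < 1$ supplies the contradiction at $(4,1)$ in the opposite direction), yields $P_{A,\gamma^{\prime}}[S_{4}^{(\gamma^{\prime})}] \leq P_{B,\gamma^{\prime}}[S_{4}^{(\gamma^{\prime})}]$. Equality on $S_{4}^{(\gamma^{\prime})}$ then forces $b_{13} = a_{13}$ and $b_{24} = a_{24}$, and since the $(1,4)$ entry is already fixed to be $a_{14}$ in both matrices, we conclude $A = B$.

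The only real subtlety is selecting the correct set $S_{k}^{(\gamma^{\prime})}$; I expect the choice $k = 4$ to be forced because $(4,1)$ is the unique position in $P_{A,\gamma^{\prime}}$ whose value is $\frac{1}{a_{13} a_{24}}$ (rather than just a single $a_{ij}^{\pm 1}$), so its comparison with $\frac{1}{a_{14}}$ translates into exactly the inequality $a_{14} < a_{13} a_{24}$ furnished by the hypothesis $\gamma^{\prime}(A) < 1$. The other choices of $k$ produce conditions involving only $a_{13}$ vs.\ $a_{14}$ or $a_{24}$ vs.\ $a_{14}$, which are not ruled out by $\gamma^{\prime}(A) < 1$ alone; this is the one bookkeeping step that requires checking, after which the lemma falls out of the same template used in the preceding three lemmas.
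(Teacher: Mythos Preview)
Your proposal is correct and follows essentially the same approach as the paper: the same reduction via Lemmas \ref{lB}, \ref{lBp}, \ref{lG}, the same application of Theorem \ref{lemsamerow} with $k=4$, and the same $(4,1)$-entry comparison to rule out $\rho=\alpha$. The only cosmetic difference is that the paper phrases the elimination of $\gamma\in\Gamma(B)$ as a contradiction (since $\gamma'\in\Gamma(A)$ forces $\gamma\notin\Gamma(A)$) rather than as an immediate termination, but the logical content is identical.
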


\begin{proof}
Suppose that $\gamma^{\prime}\in\Gamma(A).$ If $\beta\in\Gamma(A)\cup
\Gamma(B)$ or $\beta^{\prime}\in\Gamma(A)\cup\Gamma(B)$, by Lemmas \ref{lB}
and \ref{lBp}, $\ A=B.$ Suppose that $\beta,\beta^{\prime}\notin\Gamma
(A)\cup\Gamma(B)$. We have $\gamma\notin\Gamma(B)$, as otherwise, by Lemma
\ref{lG}, $\ A=B,$ which contradicts the fact that $\gamma\notin\Gamma(A).$
Since $\mathcal{E}(A)\subseteq\mathcal{E}(B),$ by Theorem \ref{lemsamerow},
there is $\rho\in\Gamma(B)$ such that $P_{B,\rho}[S_{4}^{(\gamma^{\prime}%
)}]\leq P_{A,\gamma^{\prime}}[S_{4}^{(\gamma^{\prime})}].$

Since $\gamma^{\prime}(A)<1$, we have $\frac{1}{a_{13}a_{24}}<\frac{1}{a_{14}%
}.$ Thus, $P_{B,\alpha}[S_{4}^{(\gamma^{\prime})}]\ $ does not dominate
$P_{A,\gamma^{\prime}}[S_{4}^{(\gamma^{\prime})}]$. Then, $\gamma^{\prime}%
\in\Gamma(B),$ and $P_{B,\gamma^{\prime}}[S_{4}^{(\gamma^{\prime})}]$
dominates $P_{A,\gamma^{\prime}}[S_{4}^{(\gamma^{\prime})}].$ Similarly, since
$\mathcal{E}(B)\subseteq\mathcal{E}(A)$ and $\gamma^{\prime}\in\Gamma(B)$, we
get that $P_{A,\gamma^{\prime}}[S_{4}^{(\gamma^{\prime})}]\ $ dominates
$P_{B,\gamma^{\prime}}[S_{4}^{(\gamma^{\prime})}].$ Thus, $P_{A,\gamma
^{\prime}}[S_{4}^{(\gamma^{\prime})}]=P_{B,\gamma^{\prime}}[S_{4}%
^{(\gamma^{\prime})}]$, implying $A=B.$
\end{proof}

\begin{theorem}
\label{thmain4}Let $A,B\in\mathcal{PC}_{4}.$ If $\mathcal{E}(A)=\mathcal{E}%
(B)$ then $A=B$.
\end{theorem}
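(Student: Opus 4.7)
The plan is to reduce the theorem to Lemmas~\ref{lB}--\ref{lGp} via a suitable monomial normalization, and then finish off one residual extremal case by direct computation. First, if $A$ is consistent, the observation at the start of Section~\ref{s60} already gives $A=B$, so I would assume from the outset that $A,B\in\mathcal{PC}_4^0$.

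By Theorem~\ref{th4}, $\theta_A(A)=\theta_B(B)$ and there is a common H-cycle $\nu\in\Gamma(A)\cap\Gamma(B)$ with $\nu(A)=\nu(B)=\theta_A(A)$, along which the entries of $A$ and $B$ agree. Using Lemma~\ref{lsim} (so that monomial similarity preserves $\mathcal{E}(\cdot)$ and hence the hypothesis $\mathcal{E}(A)=\mathcal{E}(B)$), I would apply a single permutation plus a positive diagonal similarity simultaneously to $A$ and $B$, so that $\nu=\alpha=12341$, the entries in positions $(1,2)$, $(2,3)$, $(3,4)$ equal $1$ in both matrices, and $\alpha(A)=\alpha(B)=1/a_{14}$ with $a_{14}>1$. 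After this normalization $A$ and $B$ both take the form (\ref{AB}), sharing the $(1,4)$ entry and possibly differing only in the $(1,3)$ and $(2,4)$ entries.

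Now I would split on $\Gamma(A)\cup\Gamma(B)$. If one of $\beta,\beta',\gamma,\gamma'$ lies in $\Gamma(A)$, the matching Lemma~\ref{lB}, \ref{lBp}, \ref{lG}, or \ref{lGp} immediately delivers $A=B$. Interchanging the roles of $A$ and $B$ and re-applying those lemmas covers the case where such an H-cycle lies in $\Gamma(B)$ instead. The only remaining configuration is $\Gamma(A)=\Gamma(B)=\{\alpha\}$, which forces each of $\beta(A),\beta'(A),\gamma(A),\gamma'(A)$ to be $\geq 1$; using the formulas in Example~\ref{ex4by4}, this yields $a_{24}\geq a_{13}$, $a_{13}\geq a_{24}$, and $a_{13}a_{24}=a_{14}$, so that $a_{13}=a_{24}=\sqrt{a_{14}}$, and symmetrically $b_{13}=b_{24}=\sqrt{a_{14}}$, whence $A=B$.

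The heavy lifting has already been absorbed into Theorem~\ref{th4} and the four dominance lemmas, so I do not anticipate a genuine obstacle here; the only delicate point is bookkeeping the case analysis so that the symmetric appeal to Lemmas~\ref{lB}--\ref{lGp} with the roles of $A$ and $B$ swapped is rigorous, which is justified because $\mathcal{E}(A)=\mathcal{E}(B)$ is symmetric in $A$ and $B$ and the normalized form (\ref{AB}) was chosen to be symmetric as well.
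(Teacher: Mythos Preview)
Your proposal is correct and follows essentially the same route as the paper's proof: reduce to the normalized form (\ref{AB}) via Theorem~\ref{th4} and a monomial similarity, invoke Lemmas~\ref{lB}--\ref{lGp} when some cycle other than $\alpha$ lies in $\Gamma(A)$ (or, by symmetry, in $\Gamma(B)$), and handle the residual case $\Gamma(A)=\Gamma(B)=\{\alpha\}$ directly. Your treatment of the residual case and of the $A\leftrightarrow B$ symmetry is slightly more explicit than the paper's, but the argument is the same.
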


\begin{proof}
If $A$ or $B$ is consistent, the result has already been mentioned. Suppose
that $A,B\in\mathcal{PC}_{4}^{0}$. By Theorem \ref{th4}, if $\mathcal{E}%
(A)=\mathcal{E}(B),$ there is a common cycle in $A$ and $B$ with the same
entries along it in both matrices $A$ and $B$ and with minimal cycle product.
So, by a monomial similarity, we may assume that both $A\ $and $B$ are as in
(\ref{AB}) with $\theta_{A}(A)=\theta_{B}(B)=\frac{1}{a_{14}}$. If
$\#\Gamma(A)=\#\Gamma(B)=1,$ then $\beta(A)=\gamma(A)=\beta(B)=\gamma(B)=1$
(for $\beta$ and $\gamma$ as in (\ref{abg})), implying $A=B.$ If
$\#\Gamma(A)>1$, the result follows from Lemmas \ref{lB} to \ref{lGp}.
\end{proof}

\section{Conclusions\label{scon}}

For a given $n$-by-$n$ inconsistent reciprocal matrix $A$, we have given a
union of at most $\frac{(n-1)!}{2}$ matrix intervals, depending on the entries
of $A$, such that a consistent matrix lies in it if and only if it is obtained
from an efficient vector for $A.$ Each interval is associated with a
Hamiltonian cycle product $<1$ in $A$. Based on this result, the partial order
on the alternatives dictated by the efficient vectors follows. The existence
of a unique order for the efficient vectors corresponding to each matrix
interval is considered in detail. We then focused on the question of when
$\mathcal{E}(A)=\mathcal{E}(B)$ implies $A=B$. We gave some general sufficient
conditions for the implication to hold, and have shown that it is correct when
both matrices $A$ and $B$ are simple perturbed consistent matrices (which
includes the case $n=3$) and when $n=4.$ It is not yet known if there are
counterexamples in any dimension greater than $4$.

\bigskip

There are no conflicts of interest.

\bigskip

\end{document}